\documentclass[12pt]{amsart}
\usepackage{txfonts}      
\usepackage{amssymb}
\usepackage{eucal}
\usepackage{amsmath}
\usepackage{amscd}
\usepackage{xcolor}
\usepackage{multicol}
\usepackage[all]{xy}           
\usepackage{graphicx}
\usepackage{color}
\usepackage{colordvi}
\usepackage{xspace}
\usepackage{tikz}
\usepackage{makecell}
\usepackage{appendix}
\usepackage{enumerate,enumitem}
\usepackage{amsthm}
\usepackage[thicklines]{cancel}
\usepackage[compress]{cite}
\usepackage{ifpdf}
\ifpdf
\usepackage[colorlinks,final,backref=page,hyperindex]{hyperref}
\else
\usepackage[colorlinks,final,backref=page,hyperindex,hypertex]{hyperref}
\fi

\usepackage[active]{srcltx} 
\newtheorem{thm}{Theorem}[section]
\newtheorem{lem}[thm]{Lemma}

\newtheorem{pro}[thm]{Proposition}
\theoremstyle{definition}
\newtheorem{defi}[thm]{Definition}
\newtheorem{ex}[thm]{Example}
\newtheorem{rmk}[thm]{Remark}

\title[Infinite-dimensional pre-Lie bialgebras constructions] {
Infinite-dimensional pre-Lie bialgebras induced from Leibniz-dendriform bialgebras and Zinbiel-dendriform bialgebras
}

\author{Qinxiu Sun}
\address{Qinxiu Sun, Department of Mathematics, Zhejiang University of Science and Technology, Hangzhou, 310023} \email{qxsun@126.com}

\subjclass[2020]{17A30, 17A60, 17B38, 17B62, 17B62, 17D25}

\keywords{pre-Lie bialgebra, Leibniz algebra, Zinbiel algebra, Leibniz-dendriform bialgebra, Zinbiel-dendriform
 bialgebra, Yang-Baxter equation}
\begin{document}
\begin{abstract}  
In this paper, we establish a completed pre-Lie bialgebra structure on the tensor product of
 a Leibniz-dendriform bialgebra and a quadratic $\mathbb{Z}$-graded Zinbiel algebra. 
 We also obtain such a structure on the tensor product of a Zinbiel-dendriform bialgebra and a quadratic 
$\mathbb{Z}$-graded Leibniz algebra. Moreover, a Zinbiel-dendriform bialgebra is precisely one 
whose affinization by a special quadratic $\mathbb{Z}$-graded Leibniz algebra is a completed pre-Lie bialgebra.
 Finally, using solutions of the ZD-YBE (resp.~LD-YBE) with invariant skew-symmetric
  parts in a Zinbiel-dendriform (resp.~ Leibniz-dendriform) algebra, 
  we construct completed solutions possessing invariant symmetric parts of the 
$S$-equation in the induced pre-Lie algebra.

\end{abstract}

\maketitle

\vspace{-1.2cm}

\tableofcontents

\vspace{-1.2cm}

\allowdisplaybreaks

\section{Introduction}

The purpose of this paper is to construct completed pre-Lie bialgebras 
from Leibniz-dendriform bialgebras (resp.~ Zinbiel-dendriform bialgebras) and quadratic $\mathbb{Z}$-graded Zinbiel (resp.~ Leibniz) algebras.

\subsection{Operadic Koszul duality and affinization}
A classical result in operadic theory asserts that the tensor product of algebras over 
any Koszul dual pair of binary quadratic operads carries a natural Lie algebra structure~\cite{GK,LV}. 
This observation provides a powerful and general framework for constructing Lie algebras from pairs 
of Koszul dual algebraic structures. Building upon this operadic principle, Hong, Bai 
and Guo introduced the affinization of Novikov bialgebras, 
showing that the tensor product of a Novikov bialgebra and a quadratic 
?$\mathbb{Z}$-graded right Novikov algebra admits a natural completed Lie bialgebra structure~\cite{HBG}. 
Since every pre-Novikov algebra gives rise to a Novikov algebra and the commutator
 of a pre-Lie algebra is a Lie algebra, it follows from \cite{HBG} that 
 there should exist a natural construction of pre-Lie algebras from pre-Novikov algebras and right Novikov algebras.
  Completed pre-Lie bialgebras are constructed via the affinization of pre-Novikov bialgebras in \cite{LH}. 
 Note that the operads of perm algebras and pre-Lie algebras are Koszul dual to each other \cite{CL}. 
 Subsequently, Lin, Zhou and Bai extended this construction to perm bialgebras and pre-Lie bialgebras, 
 establishing that the tensor product of a perm bialgebra and a quadratic pre-Lie algebra 
 (resp.~a pre-Lie bialgebra and a quadratic perm algebra) carries a Lie bialgebra structure~\cite{LZB}.
  Likewise, since the operads of Leibniz algebras and Zinbiel algebras are 
  Koszul dual \cite{L2}, the tensor product of a Leibniz algebra and
   a Zinbiel algebra also acquires a Lie algebra structure. 
   These constructions and their affinization were further generalized to the bialgebra setting in \cite{HL}. 
 These developments highlight the ubiquity of the tensor product construction in the theory 
 of bialgebras and emphasize the role of Koszul duality in generating new algebraic structures from existing ones.

\subsection{Zinbiel-dendriform bialgebras (commutative quadri-bialgebras) and Leibniz-dendriform bialgebras}   
 A Rota-Baxter operator on an associative algebra, which originated in probability theory \cite{Bax}
 and later found use in combinatorics, 
 naturally endows the underlying vector space with a dendriform algebra structure \cite{Ag1, Ag2, E1}.
  This construction has been extended to other settings, such as Rota-Baxter operators on Leibniz algebras
   and on dendriform algebras give rise to Leibniz-dendriform algebras and quadri-algebras, respectively. 
   Inspired by Lie bialgebras\cite{CP,D}, bialgebra structures for quadri-algebras were studied in [NB], 
   where finite-dimensional quadri-bialgebras
  were shown to be equivalent to Manin triples of dendriform algebras with a 2-cocycle and to Manin triples
  of quadri-algebras with an invariant bilinear form. On the other hand, Leibniz-dendriform algebras
(whose two binary operations sum to a Leibniz algebra \cite{TS}) have been investigated by Sun and Guo \cite{SG},
 they developed the corresponding bialgebra theory, established the equivalence with Manin triples, 
 and introduced the Leibniz-dendriform Yang-Baxter equation (LD-YBE) as an analogue of the classical Yang-Baxter equation. 

\subsection{ Main results of this paper} 

Building on the aforementioned works, we construct pre-Lie (co)algebras from Leibniz-dendriform (co)algebras 
paired with Zinbiel (co)algebras, and dually from Zinbiel-dendriform (co)algebras 
paired with Leibniz (co)algebras, see Propositions \ref{Tb1} and \ref{Tb2} and Propositions \ref{Tb5}-\ref{Tb6}. 
We then lift these constructions to the bialgebra setting. Since finite-dimensional pre-Lie bialgebras 
   are equivalent to para-K$\ddot{\rm{a}}$hler Lie algebras, which yield quadratic pre-Lie algebras \cite{B}. 
   To obtain a pre-Lie bialgebra, we begin by seeking a quadratic pre-Lie algebras on the tensor product of
   a quadratic Leibniz-dendriform and a Zinbiel algebra (or dually, a quadratic Zinbiel-dendriform algebra and 
  Leibniz algebra), equipped with a special nondegenerate bilinear form. Indeed, we verify that
  the tensor product of
   a quadratic Leibniz-dendriform (resp.~Zinbiel-dendriform) algebra and a quadratic Zinbiel (resp.~Leibniz) algebra
  is a quadratic pre-Lie algebra, see Propositions \ref{Sp2} and \ref{Sp3}. 
  Thus, we construct completed pre-Lie bialgebras via
  Leibniz-dendriform (resp.~Zinbiel-dendriform) bialgebras and
   quadratic $\mathbb{Z}$-graded Zinbiel (resp.~Leibniz) algebras (Theorem~\ref{Tb3} and Theorem~\ref{Tb7}).
  In the special case where the $\mathbb{Z}$-graded Leibniz algebra is taken to be a particular one (see Example~\ref{Lq1}), 
this construction yields an affinization of Zinbiel-dendriform bialgebras (Theorem~\ref{Tb7}).
Subsequently, we show that in the induced pre-Lie algebra, solutions of the ZD-YBE carrying invariant skew-symmetric parts
give rise to solutions of the solutions of the 
$S$-equation possessing invariant symmetric parts (Theorem~\ref{Tb4} and Theorem~\ref{Tb8}).
Starting from a solution of the LD-YBE (~resp.~ZD-YBE) with invariant skew-symmetric parts, 
we obtain two completed pre-Lie bialgebras via two distinct methods, and establish their relations as follows:
 \begin{small}\begin{equation*}
        \xymatrix@C=3cm{
            \txt{ solutions of the LD-YBE ~(~resp. ~ZD-YBE) \\ with invariant skew-symmetric parts} \ar[r]^-{Theorem~\ref{Qy0}~(~resp. ~\ref{Qy1})}
             \ar[d]^-{Theorem ~\ref{Tb4}~(~resp.~ \ref{Tb8})} & \txt{Leibniz-dendriform~\\(~resp.~Zinbiel-dendriform) bialgebras} \ar[d]^-{Theorem~\ref{Tb3}~(~resp.~\ref{Tb7})}\\
            \txt{solutions of the S-equation \\ with invariant symmetric parts} 
            \ar[r]^-{Theorem~\ref{Py}} & \txt{pre-Lie bialgebras}}
    \end{equation*}
\end{small}    

\subsection{ Outline of this paper}    
  
This paper is organized as follows.  
In Section~2, we show that the tensor product of a Leibniz-dendriform algebra 
and a Zinbiel algebra carries a natural
 pre-Lie algebra structure (Proposition~\ref{Tb1}). Dually, a completed pre-Lie coalgebra is induced
 from a Leibniz-dendriform coalgebra by a completed Zinbiel coalgebra (Proposition~\ref{Tb2}). 
 Moreover, We verify that there is a completed pre-Lie bialgebra structure on the tensor product of
  a Leibniz-dendriform bialgebra and a quadratic $\mathbb{Z}$-graded Zinbiel algebra.
  Subsequently, starting from solutions of the LD-YBE with invariant symmetric parts in a Leibniz-dendriform algebra, 
  we obtain completed solutions of the 
$S$-equation whose invariant symmetric parts lie in the resulting pre-Lie algebra. This procedure
 gives rise to quasi-triangular (and triangular) completed pre-Lie bialgebras (Theorem~\ref{Tb4}).
  We further prove that the quasi-triangular pre-Lie bialgebra obtained from a 
  solution of the LD-YBE coincides with the one induced by the corresponding solution 
  of the $S$-equation (Theorem~\ref{Ty1}). Section~3 develops the parallel theory using Zinbiel-dendriform 
  bialgebras and quadratic $\mathbb{Z}$-graded Leibniz algebras, 
  and establishes the affinization characterization of Zinbiel-dendriform bialgebras (Theorem~\ref{Tb7}).

\subsection{ Convention and Notations.} Throughout the paper, $\mathbb K$ is a field.  All vector spaces and algebras are over $\mathbb K$.
 All algebras are finite-dimensional, although many results still hold in the infinite-dimensional case.

 \begin{enumerate}
	\item  Let $A$ be a vector space with a binary operation $\ast:A\otimes A\longrightarrow A$. Define linear maps
$L_{\ast}, R_{\ast}:A\rightarrow \hbox{End}(A)$ by
 $L_{\ast}(a)b:=a\ast b, \  \ R_{\ast}(a)b:=b\ast a$ for all$~a, b\in A$.
 \item Suppose that $V$ is a vector space. Let $\tau:V\otimes V\longrightarrow V\otimes V ,~\tau(w\otimes w')=w'\otimes w$ be the flip operator
 and the identity map $I:V\longrightarrow V, \ \ I(w)=w $ for all$~w, w'\in V$.
 \item Let $V_1, V_2$ be two vector spaces and $\varphi:V_1\longrightarrow V_2 $ be a linear map. Denote the dual (linear)
map by $\varphi^{*}:V_{2}^{*}\longrightarrow V_{1}^{*}$ 
\begin{equation*}\langle u,\varphi^{*}(w^{*})\rangle=\langle \varphi(u),w^{*}\rangle, \ \ ~\forall~u\in V_1,w^{*}\in V_{2}^{*}.\end{equation*}
 \item Let $V$ be a vector space. For any linear map $f:A\longrightarrow \hbox{End}(V)$,
define a linear map $f^{*}:A\longrightarrow \hbox{End}(V^{*})$ by $\langle
f^{*}(x)u^{*},v\rangle=-\langle u^{*},f(x)v\rangle$ for all $x\in A,
u^{*}\in V^{*}, v\in V$.
\item Let
 $r=\sum\limits_{i}a_i\otimes b_i \in A\otimes A$. Put
$ r_{12}=\sum_{i}a_i\otimes b_i\otimes 1,~r_{13}=\sum_{i}a_i\otimes 1\otimes b_i,~r_{32}=\sum_{i} 1\otimes b_i\otimes a_i$,
where $1$ is the unit if $(A,\ast)$ is unital or a symbol playing a similar role to the unit for the
non-unital cases. The operation is given in an obvious way. For example,
\begin{small}
\begin{align*}
r_{12}\ast r_{13}:=\sum_{i,j}a_i\ast a_j\otimes b_i\otimes b_j,\;r_{13}\ast r_{32}:=\sum_{i,j}a_i\otimes b_j\otimes b_i\ast a_j.
\end{align*}\end{small}
\end{enumerate}

\section{Infinite dimensionsal pre-Lie bialgebras from Leibniz-dendriform bialgebras and quadratic $\mathbb{Z}$-graded Zinbiel algebras}
In this section, we show that there is a pre-Lie algebra structure
 on the tensor product of a Leibniz-dendriform algebra 
 and a Zinbiel algebra. Dually, we show that there is a completed pre-Lie coalgebra structure on
  the tensor product of a Leibniz-dendriform coalgebra and a completed Zinbiel coalgebra.
  Moreover, completed pre-Lie bialgebras are obtained through tensor products of Leibniz-dendriform bialgebras and 
  quadratic $\mathbb{Z}$-graded Zinbiel algebras.
  
\subsection{ Pre-Lie algebras induced from Leibniz-dendriform algebras and Zinbiel algebras}

 We begin by recalling some basic results on pre-Lie algebras and
Leibniz-dendriform algebras. 
 
A pre-Lie algebra is a vector space $A$ equipped with a bilinear product $\cdot:A\otimes
A\longrightarrow A$ satisfying
\begin{small}\begin{equation*}(x\cdot y)\cdot z-x\cdot(y\cdot z)=(y \cdot x)\cdot z-y\cdot(x\cdot
z),~\forall~x, y, z \in A.\end{equation*}\end{small}
Denote the associator by $(x,y,z)=(x\cdot y)\cdot z-x\cdot(y\cdot z)$.

 A Leibniz algebra is a vector space $A$ equipped with a binary operation $\circ$ satisfying
\begin{equation*}x\circ(y\circ z)=(x\circ y)\circ z+y\circ(x\circ z), \ \ ~\forall~x,y,z\in A.\end{equation*}

\begin{defi} \cite{TS}
 A {\bf Leibniz-dendriform algebra} is a vector space $A$ together with two binary operations
$\succ,\prec: A\otimes A \rightarrow A$ satisfying
\begin{align} \label{Ld1}
&(x\circ y)\succ z=x\succ(y\succ z)-y\succ(x\succ z),\\
\label{Ld2}&y\prec(x\circ z)+(x\succ y)\prec z=x\succ (y\prec z),\\
\label{Ld3}&x\prec(y\circ z)=(x\prec y)\prec z+y\succ(x\prec z),
\end{align}
for all $x,y,z\in A$, where $x\circ y=x\succ y+x\prec y$.
  $(A,\circ)$ is a Leibniz algebra, which is called the \textbf{sub-adjacent Leibniz algebra} of $(A,\succ,\prec)$
  and $(A,\succ,\prec)$ is called the \textbf{compatible Leibniz-dendriform algebra} on $(A,\circ)$.
\end{defi}

\begin{rmk}
 In \cite{Da}, the Leibniz-dendriform algebra is called the pre-Leibniz algebra.
\end{rmk}

\begin{defi} \cite{L}
	A {\bf (left) Zinbiel algebra} $(A, \diamond)$ is a vector space $A$ together 
with a binary operation $\diamond: A \otimes A \rightarrow A$ satisfying the following identity:
	\begin{equation} \label{Z1}
		a_{1}\diamond(a_{2}\diamond a_{3})=(a_{1}\diamond a_{2})\diamond a_{3}
+(a_{2}\diamond a_{1})\diamond a_{3}, \;\; \forall a_{1}, a_{2}, a_{3}\in A.
	\end{equation}
\end{defi}

\begin{pro}\label{Tb1}
Let $(A, \succ, \prec)$ be a Leibniz-dendriform algebra and $(B, \diamond)$ be a Zinbiel algebra.
Define a binary operation $\cdot$ on $A\otimes B$ by
\begin{small}\begin{align} \label{Pd1}
(a\otimes x)\cdot(b\otimes y)
=a\succ b\otimes x\diamond y
-b\prec a\otimes y\diamond x, 
\end{align}\end{small}
for any $a,b\in A$ and $x,y\in B$. Then
$(A\otimes B, \cdot)$ is a pre-Lie algebra, which is called the {\bf pre-Lie algebra induced 
from $(A, \succ, \prec)$ by $(B, \diamond)$}.  
\end{pro}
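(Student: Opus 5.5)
The plan is to verify the pre-Lie identity directly. Set $X=a\otimes x$, $Y=b\otimes y$, $Z=c\otimes z$. The associator $(X,Y,Z)$ must be symmetric in $X$ and $Y$, so it is enough to show that
\[
(X,Y,Z)-(Y,X,Z)=(X\cdot Y)\cdot Z-(Y\cdot X)\cdot Z-X\cdot(Y\cdot Z)+Y\cdot(X\cdot Z)
\]
vanishes. Expanding each term with \eqref{Pd1} produces a sum of elements of $A\otimes B$ of the form $p\otimes q$. Here $p$ is a product of $a,b,c$ in $A$ built from $\succ$ and $\prec$, and $q$ is a product of $x,y,z$ in $B$ built from $\diamond$.

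I will sort the terms by the position of $z$ in the $B$-factor, since the Zinbiel identity \eqref{Z1} has to be applied in a controlled way. Note that \eqref{Z1} says $u\diamond(v\diamond w)$ equals $(u\diamond v+v\diamond u)\diamond w$.

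First group: $z$ sits on the right in the outer product. This gives the terms $(a\succ b)\succ c\otimes (x\diamond y)\diamond z$, and similar ones coming from $(X\cdot Y)\cdot Z$ and $(Y\cdot X)\cdot Z$. On the other side, $X\cdot(Y\cdot Z)$ contributes $a\succ(b\succ c)\otimes x\diamond(y\diamond z)$. Rewriting $x\diamond(y\diamond z)$ by \eqref{Z1} expresses everything in the basis-like monomials $(x\diamond y)\diamond z$ and $(y\diamond x)\diamond z$. Collecting the coefficient of $(x\diamond y)\diamond z$ in the $A$-factor, I expect to get
\[
(a\succ b)\succ c+(b\prec a)\succ c-a\succ(b\succ c)+b\succ(a\succ c)=(a\circ b)\succ c-[\,a\succ(b\succ c)-b\succ(a\succ c)\,].
\]
This vanishes by \eqref{Ld1}. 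The coefficient of $(y\diamond x)\diamond z$ follows by the symmetric computation.

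Second group: $z$ comes first, as in $(z\diamond y)\diamond x$ or $z\diamond(y\diamond x)$, together with the mixed terms. These come from the $-c\prec(\cdot)$ parts of $(X\cdot Y)\cdot Z$ and from the nested terms. I will use \eqref{Z1} in the form $z\diamond(x\diamond y)=(z\diamond x)\diamond y+(x\diamond z)\diamond y$ to normalize everything to left-bracketed monomials $(u\diamond v)\diamond w$. Matching the coefficients of $(x\diamond z)\diamond y$ should then reduce to \eqref{Ld2}, which involves $x\succ(y\prec z)$ and $(x\succ y)\prec z$. Similarly, the coefficients of $(z\diamond x)\diamond y$ and $(z\diamond y)\diamond x$ should reduce to \eqref{Ld3}, whose term $c\prec(a\circ b)$ arises from $-\,c\prec(X\cdot Y\text{ and }Y\cdot X\text{ contributions})$.

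The main obstacle is this normalization step. Left-bracketed monomials $(u\diamond v)\diamond w$ are not linearly independent in a general Zinbiel algebra, so I cannot literally compare coefficients. Instead, I will rewrite every right-nested term via \eqref{Z1} and then arrange the total as a sum of terms $(\text{identity in }A)\otimes(\text{monomial in }B)$, where each $A$-part is one of \eqref{Ld1}--\eqref{Ld3} or a permutation of it. That shows the sum is zero regardless of any relations in $B$. The risk is sign and order bookkeeping when the order of $b$ and $a$ is reversed by the $-b\prec a\otimes y\diamond x$ term, so I will track each of the sixteen expanded terms in a table indexed by the $B$-monomial.
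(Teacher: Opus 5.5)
Your plan matches the paper's proof: expand both associators via \eqref{Pd1}, rewrite the $B$-factors with \eqref{Z1}, and show that each $A$-coefficient vanishes by \eqref{Ld1}--\eqref{Ld3}. You normalize fully to left-nested monomials, whereas the paper keeps a mixed set such as $x\diamond(y\diamond z)$ and $(y\diamond x)\diamond z$; this difference is immaterial. Your assignment also checks out: \eqref{Ld2} handles $(x\diamond z)\diamond y$ and $(y\diamond z)\diamond x$, and \eqref{Ld3} handles $(z\diamond x)\diamond y$ and $(z\diamond y)\diamond x$.

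One slip needs fixing. The $A$-coefficient of $(x\diamond y)\diamond z$ is $(a\succ b)\succ c+(a\prec b)\succ c-a\succ(b\succ c)+b\succ(a\succ c)$, where the $\prec$-term comes from $-(Y\cdot X)\cdot Z$. The term $(b\prec a)\succ c$ that you wrote actually belongs to $(y\diamond x)\diamond z$. Only with $a\prec b$ in place of $b\prec a$ does the expression equal $(a\circ b)\succ c-[a\succ(b\succ c)-b\succ(a\succ c)]$.
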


\begin{proof}
To prove that $(A\otimes B, \circ)$ is a pre-Lie algebra, we only need to check that
$(a\otimes x,b\otimes y,c\otimes z)=(b\otimes y,a\otimes x,c\otimes z)$ for all $a, b, c\in A$, $x,y,z\in B$.
In fact, using Eq.~\eqref{Pd1} we obtain
\begin{small}\begin{align*}
&(a\otimes x,b\otimes y,c\otimes z)
\\=&(a\succ  b)\succ c\otimes  (x\diamond y)\diamond z-c\prec(a\succ b)\otimes z\diamond (x\diamond y)
\\&- (b\prec a)\succ c\otimes  (y\diamond x)\diamond z+c\prec (b\prec a)\otimes z\diamond (y\diamond x)
-a\succ  (b\succ c)\otimes  x\diamond (y\diamond z)\\&+(b\succ c)\prec a\otimes (y\diamond z)\diamond x 
+ a\succ (c\prec b)\otimes x \diamond(z\diamond y) -(c\prec b)\prec a\otimes (z\diamond y)\diamond x,
\end{align*}\end{small}
and
\begin{small}\begin{align*}
&(b\otimes y,a\otimes x,c\otimes z)
\\=&(b\succ  a)\succ c\otimes  (y\diamond x)\diamond z-c\prec(b\succ a)\otimes z\diamond (y\diamond x)
\\&- (a\prec b)\succ c\otimes  (x\diamond y)\diamond z+c\prec (a\prec b)\otimes z\diamond (x\diamond y)
-b\succ  (a\succ c)\otimes  y\diamond (x\diamond z)\\&+(a\succ c)\prec b\otimes (x\diamond z)\diamond y 
+ b\succ (c\prec a)\otimes y \diamond(z\diamond x) -(c\prec a)\prec b\otimes (z\diamond x)\diamond y.
\end{align*}\end{small}
Combining Eqs.~\eqref{Z1} and ~\eqref{Ld1}-\eqref{Ld3}, we get
\begin{small}\begin{align*}
&(a\otimes x,b\otimes y,c\otimes z)-(b\otimes y,a\otimes x,c\otimes z)
\\=&[(a\succ  b)\succ c+(a\prec  b)\succ c-a\succ  (b\succ c)+b\succ  (a\succ c)]\otimes  x\diamond (y\diamond z)
\\&-[(a\succ  b)\succ c+(a\prec  b)\succ c+(b\prec a)\succ c +(b\succ a)\succ c]\otimes  (y\diamond x)\diamond z
\\&+[a\succ(c\prec  b)-c\prec(a\prec b)-c\prec(a\succ b)-(a\succ  c)\prec b]\otimes  x\diamond (z\diamond y)
\\&+[(a\succ c)\prec  b+(c\prec a)\prec b]\otimes ( z\diamond x)\diamond y-[(b\succ c)\prec a
+(c\prec b)\prec a]\otimes  (z\diamond y)\diamond a
\\&+[(b\succ c)\prec a+c\prec(b\succ  a-b\succ (c\prec a)+c\prec(b\prec a)]\otimes  z\diamond (y\diamond x)
\\=&0.
\end{align*}\end{small}
The proof is completed.
\end{proof}

\subsection{Pre-Lie coalgebras from Leibniz-dendriform coalgebras and completed Zinbiel coalgebras}

In this subsection, we aim to give the dual version of Proposition \ref{Tb1}. 
We begin by reviewing the notions of completed tensor products, mainly following \cite{Ta}.

Let $U=\oplus_{i\in\mathbb{Z}}U_{i}$ and $V=\oplus_{j\in\mathbb{Z}}V_{j}$ be $\mathbb{Z}$-graded vector spaces.
$U\,\hat{\otimes}\,V=\prod_{i,j\in\mathbb{Z}}U_{i}\otimes V_{j}$ is called the {\bf completed tensor product}
 of $U$ and $V$ to be the vector space.
If $U$ and $V$ are finite-dimensional, then $U\,\hat{\otimes}\,V$ coincides with the usual tensor product
 $U\otimes V$. In general, an element of $U\hat{\otimes}V$ is an infinite formal sum $\sum_{i,j\in\mathbb{Z}} X_{ij}$ 
 with $X_{ij}\in U_{i}\otimes V_{j}$. Consequently, each $X_{ij}$ can be expressed as
  $\sum_{\alpha} u_{i,\alpha}\otimes v_{j,\alpha}$, where $u_{i,\alpha}\otimes v_{j,\alpha}\in U_{i}\otimes V_{j}$ 
  are pure tensors and the index set for $\alpha$ is finite.
  With these notations, for linear
maps $\varphi: U\rightarrow U'$ and $\psi: V\rightarrow V'$, we define
\begin{small}\begin{align*}
\varphi\hat{\otimes}\,\psi: U\,\hat{\otimes}\,V\rightarrow U'\,\hat{\otimes}\,V',
\qquad \sum_{i,j,\alpha}u_{i,\alpha}\otimes v_{j, \alpha}\mapsto
\sum_{i,j,\alpha} \varphi(u_{i, \alpha})\otimes \psi(v_{j, \alpha}).
\end{align*}\end{small}
The flip operator $\tau$ can be extended to a completed operator
$\hat{\tau}: V \hat{\otimes} V \rightarrow V \hat{\otimes}V$ given by
$\sum_{i,j,\alpha}u_{i, \alpha}\otimes v_{j, \alpha}\mapsto
\sum_{i,j,\alpha}v_{j, \alpha}\otimes u_{i, \alpha}$.

A \textbf{$\mathbb{Z}$-graded algebra} is an algebra $(A, \ast)$ with 
a linear decomposition $A=\oplus_{i\in\mathbb{Z}}A_{i}$ 
such that $A_{i}\ast A_{j}\subseteq A_{i+j}$ for all $i, j\in\mathbb{Z} $.

\begin{defi} \cite{HL}
A {\bf completed Zinbiel coalgebra} is a pair $(B, \eta)$, where
    $B=\oplus_{i\in\mathbb{Z}}B_{i}$ is a $\mathbb{Z}$-graded vector space
    and $\eta: B\rightarrow B\,\hat{\otimes}\,B$ is a linear map satisfying
\begin{small}\begin{align}\label{Cp1}
    (I\,\hat{\otimes}\,\eta)\eta=(\tau\,\hat{\otimes}\,I)(I\,\hat{\otimes}\,\eta)\eta =(\eta\,\hat{\otimes}\,I)\eta+(\tau\,\hat{\otimes}\,I)(\eta\,\hat{\otimes}\,I)\eta.
    \end{align}\end{small} 
If $B$ is finite-dimensional, then $(B,\eta)$ is called a Zinbiel coalgebra.
\end{defi}

By Eq.~\eqref{Cp1}, we have 
{\small
	\begin{flalign}&\label{Cp2}(I\hat{\otimes}  \eta)\hat{\tau}\eta=(\hat{\tau}\hat{\otimes} I)
(I\hat{\otimes}  \hat{\tau})(\eta\hat{\otimes} I)\eta,
\\&\label{Cp3}
(\eta\hat{\otimes}  I)\hat{\tau}\eta=(I\hat{\otimes}  \hat{\tau})(\hat{\tau}\hat{\otimes} I)(I\hat{\otimes} \eta)\eta,\\
&\label{Cp4}(\hat{\tau}\hat{\otimes} I)(\eta\hat{\otimes} I)\hat{\tau}\eta=(I\hat{\otimes} \eta)\hat{\tau}\eta+
(I\hat{\otimes}  \hat{\tau})(I\hat{\otimes} \eta)\hat{\tau}\eta,
\\&\label{Cp5}
(\hat{\tau}\hat{\otimes} I)(I\hat{\otimes}  \hat{\tau})(I\hat{\otimes} \eta)\hat{\tau}\eta
=(\eta\hat{\otimes} I)\hat{\tau}\eta-(I\hat{\otimes}  \hat{\tau})(\eta\hat{\otimes} I)\eta
\end{flalign}}

\begin{defi} \cite{SG} \label{DB1} A Leibniz-dendriform coalgebra is a triple $(A,\Delta_{\succ},\Delta_{\prec})$, where
$A$ is a vector space and $\Delta_{\succ},\Delta_{\prec}:A\longrightarrow A\otimes A$ are linear maps such that
the following conditions hold:
\begin{align}&\label{Ca1}
( \Delta\otimes I)\Delta_{\succ}=(I\otimes\Delta_{\succ})\Delta_{\succ}-(\tau \otimes I)(I\otimes\Delta_{\succ})\Delta_{\succ},\\&
\label{Ca2}(I\otimes \Delta_{\prec})\Delta_{\succ}-(\Delta_{\succ}\otimes I)\Delta_{\prec}=
(\tau\otimes I)(I \otimes \Delta)\Delta_{\prec},\\&
\label{Ca3}( I\otimes \Delta)\Delta_{\prec}=( \Delta_{\prec}\otimes I)\Delta_{\prec}
+(\tau\otimes I)(I \otimes \Delta_{\prec})\Delta_{\succ},\end{align}
where $\Delta=\Delta_{\succ}+\Delta_{\prec}$.
\end{defi}

A {\bf complete pre-Lie coalgebra} \cite{LZB}
 is a pair $(L, \delta)$, where
$L=\oplus_{i\in \mathbb{Z}} L_i$
  is a $\mathbb{Z}$-graded vector space and $\delta: L\rightarrow L\hat{\otimes} L$ is a linear map satisfying
    \begin{align}\label{Pl1}
    (I\hat{\otimes} \delta)\delta(a)-(\tau\hat{\otimes} I)(I\hat{\otimes} \delta)\delta(a)=(\delta\hat{\otimes} I\delta(a)
    -(\tau\hat{\otimes} I)(\delta\hat{\otimes} I)\delta(a),  \;\; a\in L. \end{align}
If $L$ is finite-dimensional, then $(L,\delta)$ is called a Zinbiel coalgebra.   
    
We now state the dual version of Proposition \ref{Tb1}.

 \begin{pro} \label{Tb2}
Let $(A, \Delta_{\succ},\Delta_{\prec})$ be a Leibniz-dendriform coalgebra and
$(B,\eta)$ be a completed Zinbiel coalgebra.
 Define a linear map $\delta: A\otimes B\rightarrow(A\otimes B)\otimes(A\otimes B)$ by
\begin{small}\begin{align}
\delta(a\otimes x)
&=\Delta_{\succ}(a)\bullet \eta(x)-\tau\Delta_{\prec}(a)\bullet \tau\eta(x)
 \label{P1} \\
&=\sum_{(a)}\sum_{i,j,\alpha}\Big((a_{1}\otimes x_{1,i,\alpha})
\otimes(a_{2}\otimes x_{2,j,\alpha})-(a_{(2)}\otimes x_{2,j,\alpha})
\otimes(a_{(1)}\otimes x_{1,i,\alpha})\Big),  \nonumber
\end{align}\end{small}
for any $a,b\in A$ and $x,y\in B$, where
 $\Delta_{\succ}(a)=\sum_{(a)}a_{1}\otimes a_{2},~\Delta_{\prec}(a)=\sum_{(a)}a_{(1)}\otimes a_{(2)}$
 and  $\eta(x)=\sum_{i,j,\alpha}x_{1,i,\alpha}\otimes x_{2,j,\alpha}$.
 Then $(A\otimes B, \delta)$ is a completed pre-Lie coalgebra, which is called the \textbf{completed pre-Lie coalgebra
} induced from $(A,\Delta_{\succ},\Delta_{\prec})$ by $(B, \eta)$.
\end{pro}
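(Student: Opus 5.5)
We verify the completed pre-Lie coalgebra identity \eqref{Pl1} for $\delta$ directly, mirroring the proof of Proposition~\ref{Tb1} in dual form. Let $\tau_{12}=\tau\hat{\otimes} I$ and $\tau_{23}=I\hat{\otimes}\tau$. Every term produced lies in $(A\otimes B)^{\hat{\otimes}3}$, and we rearrange it as an element of $A^{\otimes 3}\otimes B^{\hat{\otimes}3}$ via the canonical shuffle. Since $A$ is finite-dimensional, all operations are well defined componentwise in the grading of $B$, so the formal sums cause no convergence problems.

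The first step is to expand the four terms $(I\hat{\otimes}\delta)\delta$, $\tau_{12}(I\hat{\otimes}\delta)\delta$, $(\delta\hat{\otimes}I)\delta$ and $\tau_{12}(\delta\hat{\otimes}I)\delta$ applied to $a\otimes x$. Because $\delta$ has two summands, each iterate contributes four terms, sixteen in all. Each term has the form (a permutation applied to an iterated coproduct of $a$ built from $\Delta_\succ,\Delta_\prec$) $\otimes$ (a permutation applied to an iterated coproduct of $x$ built from $\eta$). For example, $(I\hat{\otimes}\delta)\delta(a\otimes x)$ yields $(I\otimes\Delta_\succ)\Delta_\succ(a)\otimes(I\hat{\otimes}\eta)\eta(x)$ and $-\tau_{23}(I\otimes\Delta_\prec)\Delta_\succ(a)\otimes\tau_{23}(I\hat{\otimes}\eta)\eta(x)$, together with the analogous terms coming from $-\tau\Delta_\prec$ in the first slot.

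The second step is to normalize the $B$-factors. Using \eqref{Cp1} and its consequences \eqref{Cp2}--\eqref{Cp5}, we rewrite every $B$-part as a permutation $\sigma\in S_3$ applied to one of the basic iterates $(I\hat{\otimes}\eta)\eta(x)$ or $(I\hat{\otimes}\eta)\hat{\tau}\eta(x)$. This is the exact dual of the step where \eqref{Z1} was used to reduce every Zinbiel monomial to the forms $x\diamond(y\diamond z)$, $(y\diamond x)\diamond z$, and so on, in the proof of Proposition~\ref{Tb1}. The Zinbiel identity lets every $(\eta\hat{\otimes}I)\eta$-type term be traded for $(I\hat{\otimes}\eta)\eta$-type terms.

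The third step is to collect the coefficients, which are elements of $A^{\otimes3}$, in front of each normalized $B$-tensor. We then check that each coefficient vanishes by \eqref{Ca1}--\eqref{Ca3}, after applying suitable permutations to those identities. The six groups should match line by line the six brackets in the proof of Proposition~\ref{Tb1}, now read as dual statements. For instance, the coefficient of $(I\hat{\otimes}\eta)\eta(x)$ should be exactly \eqref{Ca1} rearranged, while the groups involving $\tau$-twisted $\eta$ use \eqref{Ca2} and \eqref{Ca3}.

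The main obstacle is bookkeeping. The permutations acting on the $A$-side and the $B$-side must be tracked simultaneously, and the $B$-normalization must be chosen so that the resulting $A$-coefficients are precisely permuted copies of \eqref{Ca1}--\eqref{Ca3}. A cleaner alternative, which we would attempt first, is a duality argument. When $B$ is graded with finite-dimensional components, $\delta$ is the transpose of the product \eqref{Pd1} on $A^*\otimes B^*_{\rm gr}$. Here $(A^*,\Delta_\succ^*,\Delta_\prec^*)$ is a Leibniz-dendriform algebra, since \eqref{Ca1}--\eqref{Ca3} dualize to \eqref{Ld1}--\eqref{Ld3}, and $(B^*,\eta^*)$ is a Zinbiel algebra by \eqref{Cp1}. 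Proposition~\ref{Tb1} then gives the pre-Lie identity, and \eqref{Pl1} follows by transposition. In the general case we would check identity \eqref{Pl1} by pairing with arbitrary test elements, which reduces it to finitely many graded components at a time. The delicate points in this approach are the sign and flip conventions, namely matching $\tau\Delta_\prec\bullet\tau\eta$ with $-b\prec a\otimes y\diamond x$.
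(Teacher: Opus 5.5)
Your expansion plan (sixteen terms, normalize the $B$-factors by \eqref{Cp1}--\eqref{Cp5}, then kill each $A^{\otimes 3}$-coefficient with \eqref{Ca1}--\eqref{Ca3}) is exactly the paper's proof. The paper's first group $A(1)$ is the coefficient of $(I\hat{\otimes}\eta)\eta(x)$ and is killed by \eqref{Ca1}, as you predicted.

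The duality route you would try first is genuinely different, and it works up to one caveat. Taking $\Delta_{\succ}^*$, $\Delta_{\prec}^*$, $\eta^*$ as products, \eqref{Ca1}--\eqref{Ca3} transpose term by term to \eqref{Ld1}--\eqref{Ld3}, \eqref{Cp1} transposes to \eqref{Z1}, and the pre-Lie identity transposes to \eqref{Pl1}. Pairing $\delta(a\otimes x)$ with $(\xi\otimes f)\otimes(\zeta\otimes g)$ gives $\langle a\otimes x,\ \xi\succ\zeta\otimes f\diamond g-\zeta\prec\xi\otimes g\diamond f\rangle$, so the flip convention you worried about matches \eqref{Pd1} exactly. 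The gain is a proof with no bookkeeping, since Proposition~\ref{Tb2} becomes literally the transpose of Proposition~\ref{Tb1}.

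The caveat is that you need $B^*_{\mathrm{gr}}$ to be closed under $\eta^*$, and finite-dimensional components do not guarantee this. For $f\in B_i^*$ and $g\in B_j^*$, the functional $f\diamond g$ may be nonzero on infinitely many $B_k$. Closure does hold when $\eta$ is homogeneous, for instance $\eta_{\omega}$ with $\omega$ graded, and that covers the use in Theorem~\ref{Tb3}. In the general completed case, your test-element fallback can no longer cite Proposition~\ref{Tb1} as a black box. It amounts to redoing that computation on pairings, which is the paper's direct computation. That computation needs only the well-definedness of the iterates, which the definition of a completed Zinbiel coalgebra already presupposes.

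One small slip in your normalization step: \eqref{Cp1} expresses $(I\hat{\otimes}\eta)\eta$ through $(\eta\hat{\otimes}I)\eta$-type terms, not the other way round. The permutations of $(I\hat{\otimes}\eta)\eta$ alone span only half of what is needed; dually, right-normed Zinbiel words span $3$ of the $6$ multilinear dimensions. Your list of basic iterates still suffices, because $(\eta\hat{\otimes}I)\eta=(I\hat{\otimes}\hat{\tau})(\hat{\tau}\hat{\otimes}I)(I\hat{\otimes}\eta)\hat{\tau}\eta$ is a pure reindexing that uses no identity at all.
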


\begin{proof}By Eq.~\eqref{P1}, we have
{\small
	\begin{flalign}
		&\label{P2}(\delta\hat{\otimes} I)\delta(a\otimes x)-(I\hat{\otimes} \delta)\delta(a\otimes x)
-(\hat{\tau}\hat{\otimes} I)(\delta\hat{\otimes} I)\delta(a\otimes x)
+(\hat{\tau}\hat{\otimes} I)(I\hat{\otimes} \delta)\delta(a\otimes x)
\\=&\nonumber(\Delta_{\succ}\otimes I)\Delta_{\succ}(a)\bullet (\eta \hat{\otimes} I)\eta (x)
-(\tau\otimes I)(\Delta_{\prec}\otimes I)\Delta_{\succ}(a)\bullet
 (\hat{\tau}\hat{\otimes} I)(\eta \hat{\otimes} I)\eta (x)
\\&\nonumber-(\Delta_{\succ}\otimes I)\tau\Delta_{\prec}(a)\bullet
 (\eta \hat{\otimes} I)\hat{\tau}\eta (x)
+(\tau\otimes I)(\Delta_{\prec}\otimes I)\tau\Delta_{\prec}(a)\bullet
 (\hat{\tau}\hat{\otimes} I)(\eta \hat{\otimes} I)\hat{\tau}\eta (x),
\\&\nonumber-(I\otimes \Delta_{\succ})\Delta_{\succ}(a)\bullet (I \hat{\otimes} \eta)\eta (x)
+(I\otimes \tau)(I\otimes \Delta_{\prec})\Delta_{\succ}(a)\bullet (I\hat{\otimes}  \hat{\tau})(I\hat{\otimes} \eta )\eta (x)
\\&\nonumber+(\tau\otimes I)(I\otimes \tau)(\Delta_{\succ}\otimes I)\Delta_{\prec}(a)
\bullet (\hat{\tau}\hat{\otimes} I)(I\hat{\otimes} \hat{\tau})(\eta \hat{\otimes} I)\eta (x)
-(I\otimes \tau)(I\otimes \Delta_{\prec})\tau\Delta_{\prec}(a)\bullet
 (I\hat{\otimes} \hat{\tau})(I \hat{\otimes} \eta)\hat{\tau}\eta (x)
\\&\nonumber-(\tau\otimes I)(\Delta_{\succ}\otimes I)\Delta_{\succ}(a)\bullet
(\hat{\tau}\hat{\otimes} I) (\eta \hat{\otimes} I)\eta (x)
+(\Delta_{\prec}\otimes I)\Delta_{\succ}(a)\bullet (\eta \hat{\otimes} I)\eta (x)
\\&\nonumber+(\tau\otimes I)(\Delta_{\succ}\otimes I)\tau\Delta_{\prec}(a)
\bullet (\hat{\tau}\hat{\otimes} I) (\eta \hat{\otimes} I)\hat{\tau}\eta (x)
-(\Delta_{\prec}\otimes I)\tau\Delta_{\prec}(a)\bullet (\eta \hat{\otimes} I)\hat{\tau}\eta (x)
\\&\nonumber+(\tau\otimes I) (I\otimes \Delta_{\succ})\Delta_{\succ}(a)
\bullet (\hat{\tau}\hat{\otimes} I)(I \hat{\otimes} \eta)\eta (x)
-(\tau\otimes I)(I\otimes \tau)(I\otimes \Delta_{\prec})\Delta_{\succ}(a)\bullet 
(\hat{\tau}\hat{\otimes} I)(I\hat{\otimes}  \hat{\tau})(I\hat{\otimes} \eta )\eta (x)
\\&\nonumber-(I\otimes \tau)(\Delta_{\succ}\otimes I)\Delta_{\prec}(a)\bullet
 (I\hat{\otimes} \hat{\tau})(\eta \hat{\otimes} I)\eta (x)
+(\tau\otimes I)(I\otimes \tau)(I\otimes \Delta_{\prec})\tau\Delta_{\prec}(a)\bullet
 (\hat{\tau}\hat{\otimes} I)(I\hat{\otimes} \hat{\tau})(I \hat{\otimes} \eta)\hat{\tau}\eta (x)\nonumber.
\end{flalign}}
According to Eqs.~\eqref{Cp1}-\eqref{Cp5}, we obtain that
{\small
	\begin{flalign*}
		&(\delta\hat{\otimes} I)\delta(a\otimes x)-(I\hat{\otimes} \delta)\delta(a\otimes x)
-(\hat{\tau}\hat{\otimes} I)(\delta\hat{\otimes} I)\delta(a\otimes x)
+(\hat{\tau}\hat{\otimes} I)(I\hat{\otimes} \delta)\delta(a\otimes x)
\\=&A(1)+A(2)+A(3)+A(4)+A(5)+A(6),
\end{flalign*}}
where
{\small
	\begin{flalign*}
		&A(1)=[(\Delta\otimes I)\Delta_{\succ}(a)
-(I\otimes \Delta_{\succ})\Delta_{\succ}(a)+(I\otimes \tau)(I\otimes \Delta_{\succ})\Delta_{\succ}(a)]\bullet 
(I \hat{\otimes} \eta)\eta (x),
\\&A(2)=(\tau\otimes I)[(\Delta\otimes I)\tau \Delta_{\prec}(a)
+(I\otimes \tau)(\Delta_{\succ}\otimes I)\Delta_{\prec}(a)-(I\otimes \tau)(I\otimes \Delta_{\prec})\Delta_{\prec}(a)]
\bullet (I \hat{\otimes} \eta)\hat{\tau}\eta (x),
\\&A(3)=-[(\tau\otimes I)(I\otimes \tau)(I\otimes \Delta_{\prec})\tau\Delta_{\prec}(a)+(I\otimes \tau)(\Delta_{\succ}\otimes I)\Delta_{\prec}(a)]
\bullet (I \hat{\otimes} \hat{\tau})(\eta\hat{\otimes} I)\eta (x),\\&
A(4)=[(I\otimes \tau)(I\otimes \Delta_{\prec})\Delta_{\succ}(a)-(\Delta\otimes I)\tau\Delta_{\prec}(a)+
(\tau\otimes I)(I\otimes \tau)(I\otimes \Delta_{\prec})\tau\Delta_{\prec}(a)]\bullet(\eta\hat{\otimes} I)\hat{\tau}\eta (x)
\\&A(5)=-[(\Delta\otimes I)\Delta_{\succ}(a)+(\tau\otimes I)(\Delta\otimes I)\Delta_{\succ}(a)]
\bullet(\hat{\tau}\hat{\otimes} I)(\eta\hat{\otimes} I)\eta (x),
\\&A(6)=[(\tau\otimes I)(\Delta\otimes I)\tau\Delta_{\prec}(a)-
(I\otimes \tau)(I\otimes \Delta_{\prec})\tau\Delta_{\prec}(a)-
(\tau\otimes I)(I\otimes \tau)(I\otimes \Delta_{\prec})\Delta_{\succ}(a)]
\bullet(I\hat{\otimes} \hat{\tau})(I\hat{\otimes} \eta)\hat{\tau}\eta (x).
\end{flalign*}}
Then, 
A(1)=0 follows from  Eq.~\eqref{Ca1}, A(2)= A(5)=0 follows from Eq.~\eqref{Ca2}, 
A(3)=A(4)=0 follows from Eq.~\eqref{Ca2}-\eqref{Ca3}, and A(6)=0 follows from  Eq.~\eqref{Ca3}.
The proof is completed.
\end{proof}

\begin{rmk} 
Propositions~\ref{Tb1} and~\ref{Tb2} remain valid when Zinbiel (co)algebras are 
replaced by Zinbiel (co)dialgebras. However, since Zinbiel (co)dialgebras are more intricate, 
whereas Zinbiel (co)algebras are precisely their commutative counterparts,
 we restrict our discussion to the (co)algebra setting for convenience.
 
\end{rmk}

\subsection{Pre-Lie bialgebras from Leibniz-dendriform bialgebras and quadratic $\mathbb{Z}$-graded Zinbiel algebras}

We establish a pre-Lie (co)algebra structure on the tensor product of a
 Leibniz-dendriform (co)algebra and a Zinbiel (co)algebra (Propositions \ref{Tb1} and \ref{Tb2}). 
 This naturally raises the question of constructing (completed) pre-Lie bialgebras from 
 Leibniz-dendriform bialgebras and Zinbiel algebras, which is the main focus of this subsection.
 
 First, we recall some basic notions of Leibniz-dendriform bialgebras and (completed) pre-Lie bialgebras.

\begin{defi} \cite {SG} A Leibniz-dendriform bialgebra is a quintuple $(A,\succ,\prec,\Delta_{\succ},\Delta_{\prec})$
such that $(A,$ \ \ \  $\succ,\prec)$ is a Leibniz-dendriform algebra, $(A,\Delta_{\succ},\Delta_{\prec})$ is
a Leibniz-dendriform coalgebra and the following compatible conditions hold:
\begin{small}
\begin{align}&\label{B1}
\Delta(x\odot y)-(I\otimes L_{\odot}(x))\Delta(y)+\tau(I\otimes L_{\odot}(x))\Delta(y)+
\tau (I\otimes R_{\odot}(y))\Delta_{\succ}(x)-(I\otimes R_{\odot}(y))\Delta_{\succ}(x)=0,\\&
\label{B2}\Delta(x\succ y)-(L_{\succ}(x)\otimes I+I\otimes L_{\succ}(x))\Delta (y)-(I\otimes R_{\succ}(y))\Delta_{\odot}(x)
+\tau(I\otimes R_{\odot}(y))\Delta_{\odot}(x)=0,\\&
\label{B3}(I\otimes R_{\succ}(y))\tau\Delta_{\prec}(x)-(R_{\prec}(x)\otimes I)\Delta(y)=0,\\&
\label{B4}(\Delta_{\succ} +\tau\Delta_{\prec})(x\circ y)-(I\otimes L_{\circ}(x)+L_{\succ}(x)\otimes I)\Delta_{\odot}(y)
+(I\otimes L_{\circ}(y)+L_{\succ}(y)\otimes I)\Delta_{\odot}(x)=0,\\&
\label{B5}\Delta_{\succ}(x\circ y)-(I\otimes R_{\circ}(y))\Delta_{\succ}(x)-(I\otimes L_{\circ}(x)+L_{\odot}(x)\otimes I)\Delta_{\succ}(y)
+(L_{\odot}(y)\otimes I)\Delta_{\odot}(x)=0,\\&
\label{B6}(I\otimes R_{\circ}(y))\tau\Delta_{\prec}(x)
-(R_{\prec}(x)\otimes I)\Delta_{\succ}(y)=0,
\end{align}
\end{small}
where $x\circ y=x \succ y+x\prec y,~x\odot y=x\succ y+y\prec x,~\Delta=\Delta_{\succ}+\Delta_{\prec},
 ~\Delta_{\odot}=\Delta_{\succ}+\tau\Delta_{\prec}
$ and $R_{\circ}=R_{\prec}+R_{\succ},
~L_{\circ}=L_{\prec}+L_{\succ},~L_{\odot}=R_{\prec}+L_{\succ},~R_{\odot}=L_{\prec}+R_{\succ}$.
\end{defi}

\begin{defi}\cite{TXS}
A quadratic Leibniz-dendriform algebra is a
Leibniz-dendriform algebra $(A,\succ,\prec)$ equipped with a non-degenerate symmetric bilinear
form $\omega_{A}\in A^{*}\otimes A^{*}$ such that the following
invariant conditions hold for all $x, y, z \in A$,
\begin{equation} \label{C1}\omega_{A} (x \prec y, z)=\omega_{A}(x, y\circ z+z\circ y), \ \  \
\omega_{A}(x \succ y, z)=-\omega_{A}( y,x\circ z),\end{equation}
where $x \circ y=x \succ y+x \prec y$.
\end{defi}

\begin{defi}\cite{TXS}
A Manin triple of Leibniz-dendriform algebras is a triple
of Leibniz-dendriform algebras $(A,A_{1},A_{2})$ satisfying
\begin{enumerate}
\item $A=A_{1}\oplus A_{2}$ as vector spaces,
\item $(A,\succ,\prec,\omega)$ is a quadratic Leibniz-dendriform algebra,
\item $A_{1},A_{2}$ are isotropic subalgebras of $A$, that is,
$\omega(x,y) =\omega(a,b) =0$ for all $x,y\in A_1$ and $a,b\in A_2$.
\end{enumerate}
\end{defi}

\begin{thm} \cite{SG} \label{Be}
Let $(A,\succ_A,\prec_A)$ be a Leibniz-dendriform algebra and $(A,\circ )$ be the sub-adjacent
 Leibniz algebra of $(A,\succ_{A},\prec_{A})$.
Suppose that there is a Leibniz-dendriform algebra $(A^{*}, \succ_{A^*},\prec_{A^*})$ which is induced from
a Leibniz-dendriform coalgebra $(A, \Delta_{\succ}, \Delta_{\prec})$. Then
$(A,\succ_A,\prec_A,\Delta_{\succ},\Delta_{\prec})$ is a Leibniz-dendriform bialgebra if and only if
 $(A\oplus A^{*},A, A^{*},\omega)$ is a Manin triple of Leibniz-dendriform algebras with
the bilinear form $\omega$ given by:
  \begin{equation}\label{C2}\omega(x+\zeta, y+\eta) =\langle x,\eta\rangle+\langle \zeta,y\rangle,  
  \ \ ~\forall~x, y\in A,~\zeta,\eta\in A^{*}.\end{equation}
\end{thm}

\begin{defi} \cite{LH,B}
Let $(L, \cdot)$ be a pre-Lie algebra and $(L,
\delta)$ be a {\bf completed }pre-Lie coalgebra. If the following
compatibility conditions are satisfied for all $ a, b\in L$:
\begin{align}
&\label{Pb1}(\delta-\hat{\tau}\delta)(a\cdot b)-(L_{\cdot}(a)\hat{\otimes} I)(\delta-\hat{\tau}\delta)(b)
-(I\hat{\otimes} L_{\cdot}(a))(\delta-\hat{\tau}\delta)(b)
\\&-(I\hat{\otimes} R_{\cdot}(b))\delta(a)+(R_{\cdot}(b)\hat{\otimes}I)\hat{\tau}\delta(a)=0, \nonumber\\
&\label{Pb2}\delta(a\cdot b-b\cdot a)-(I\hat{\otimes} (R_{\cdot}(b)-L_{\cdot}(b)))\delta(a)-(I\hat{\otimes}(L_{\cdot}(a)-R_{\cdot}(a)))\delta(b)
\\&-(L_{\cdot}(a)\hat{\otimes}I)\delta(b)+(L_{\cdot}(b)\hat{\otimes} I)\delta(a)=0,\nonumber
\end{align}
then we call $(L, \cdot,\delta)$ a {\bf completed pre-Lie bialgebra}. 
If $L$ is finite-dimensional,  $(L, \cdot,\delta)$ is just the usual pre-Lie bialgebra.
\end{defi}

Recall that
a Lie algebra $(\mathfrak{g},[ \ , \ ])$ is called a \textbf{symplectic Lie algebra} \cite{B}
  if there is a nondegenerate skew-symmetric
  bilinear form $\omega_p$ (called a 2-cocycle) on $ \mathfrak{g} $, that is,
	\begin{align}
		\omega_p ([x,y],z)+\omega_p ([y,z],x)+\omega_p ([z,x],y)=0 ,\quad x,y,z\in \mathfrak{g}.
	\end{align}
	Denote it by $(\mathfrak{g}, [ \ , \ ],\omega_p)$.
	
	A symplectic Lie algebra $(\mathfrak{g}, [ \ , \ ],\omega_p)$ is called a {\bf para-K$\ddot{\texttt{a}}$hler Lie algebra} \cite{B} if $\mathfrak{g}=\mathfrak{g}_1\oplus\mathfrak{g}_2$ is the direct sum of the underlying vector spaces of two Lie subalgebras in which $\omega_p(\mathfrak{g}_i,\mathfrak{g}_i)=0$ for $i=1,2$. Denote it by $(\mathfrak{g}_1\bowtie\mathfrak{g}_2,\mathfrak{g}_1,\mathfrak{g}_2,\omega_p)$.

\begin{pro}\cite{Ch}
	Let	$(\mathfrak{g},[ \ , \ ],\omega_p)$ be a symplectic Lie algebra. Then there is  a compatible pre-Lie algebra
 structure $\cdot$ on $\mathfrak{g}$ given by
	\begin{align}\label{Sp}
		\omega_p(x\cdot y,z)=-\omega_p(y,[x,z]),\quad x,y,z\in \mathfrak{g}.
	\end{align}
\end{pro}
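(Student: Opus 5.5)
The plan is to define the product through nondegeneracy of $\omega_p$, check first that its commutator is the bracket, and then reduce the pre-Lie identity to the statement that the left multiplications form a representation of $(\mathfrak{g},[\ ,\ ])$.

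\emph{Definition.} For fixed $x,y\in\mathfrak{g}$, the map $z\mapsto -\omega_p(y,[x,z])$ is a linear functional on $\mathfrak{g}$. Since $\omega_p$ is nondegenerate, there is a unique element $x\cdot y$ satisfying Eq.~\eqref{Sp}. Uniqueness makes $(x,y)\mapsto x\cdot y$ bilinear. The whole argument works in $\mathfrak{g}$ being finite-dimensional, as assumed in the paper's conventions.

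\emph{Compatibility $x\cdot y-y\cdot x=[x,y]$.} Pair both sides with an arbitrary $z$ using $\omega_p$. The left side gives
\begin{equation*}
\omega_p(x\cdot y-y\cdot x,z)=-\omega_p(y,[x,z])+\omega_p(x,[y,z]).
\end{equation*}
The 2-cocycle condition, together with skew-symmetry of $\omega_p$ and of the bracket, gives
\begin{equation*}
\omega_p([x,y],z)=-\omega_p([y,z],x)-\omega_p([z,x],y)=\omega_p(x,[y,z])-\omega_p(y,[x,z]).
\end{equation*}
The two expressions agree for every $z$, so nondegeneracy yields the compatibility.

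\emph{$L_\cdot$ is a representation.} Write $L(x)=L_\cdot(x)$. Eq.~\eqref{Sp} says $\omega_p(L(x)w,z)=-\omega_p(w,[x,z])$, that is, $L(x)$ is minus the $\omega_p$-adjoint of $\mathrm{ad}_x$. Applying this twice gives
\begin{equation*}
\omega_p(L(x)L(y)w,z)=-\omega_p(L(y)w,[x,z])=\omega_p(w,[y,[x,z]]).
\end{equation*}
Subtracting the same identity with $x$ and $y$ exchanged, and using the Jacobi identity $[y,[x,z]]-[x,[y,z]]=-[[x,y],z]$, we get
\begin{equation*}
\omega_p\big((L(x)L(y)-L(y)L(x))w,z\big)=-\omega_p(w,[[x,y],z])=\omega_p(L([x,y])w,z).
\end{equation*}
By nondegeneracy, $[L(x),L(y)]=L([x,y])$.

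\emph{Pre-Lie identity.} The identity $(x,y,z)=(y,x,z)$ rewrites as
\begin{equation*}
(x\cdot y-y\cdot x)\cdot z=x\cdot(y\cdot z)-y\cdot(x\cdot z),
\end{equation*}
that is, $L(x\cdot y-y\cdot x)=[L(x),L(y)]$. By the compatibility step the left side is $L([x,y])$, so this is exactly the representation property just proved.

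All steps are direct calculations. The only place needing care is the sign bookkeeping in the cocycle rearrangement for the compatibility step, where skew-symmetry is used several times.
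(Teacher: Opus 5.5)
Your proof is correct. The signs in the cocycle rearrangement check out, the double-adjoint computation combined with the Jacobi identity gives $[L_\cdot(x),L_\cdot(y)]=L_\cdot([x,y])$, and rewriting the pre-Lie identity as $L_\cdot(x\cdot y-y\cdot x)=[L_\cdot(x),L_\cdot(y)]$ finishes the argument.

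The paper does not prove this proposition; it quotes it from \cite{Ch}. Your argument is the standard one, written out in elementary form. Let $T\colon\mathfrak{g}\to\mathfrak{g}^*$, $T(x)=\omega_p(x,\cdot)$. Then Eq.~\eqref{Sp} says $L_\cdot(x)=T^{-1}\,\mathrm{ad}^*(x)\,T$, so $L_\cdot$ is the coadjoint representation transported by $T$. The cocycle condition says exactly that $T^{-1}$ is an invertible $\mathcal{O}$-operator for $\mathrm{ad}^*$, and this is what makes the commutator of $\cdot$ equal to $[\ ,\ ]$.
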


\begin{defi}\cite{LZB}
Let $(L, \cdot)$ be a pre-Lie algebra. If  there is a non-degenerate skew-symmetric bilinear 
form $\omega_p$ on $L$ such that Eq.~(\ref{Sp}) holds, where $[x,z]=x\cdot z-z\cdot x$ 
for all $x$, $z\in L$, then $(L, \cdot, \omega_p)$ is called a {\bf quadratic pre-Lie algebra}.
\end{defi}

\begin{pro}\cite{B}
	Let	$(A,\cdot_A)$ be a pre-Lie algebra. Suppose that $(A^*, \cdot_{A^*})$ is a pre-Lie algebra 
which is induced from a pre-Lie coalgebra $(A,\delta)$.  Then $(A,\cdot ,\delta)$ is a pre-Lie bialgebra 
if and only if $(\mathfrak{g}(A)\bowtie \mathfrak{g}(A^*),\mathfrak{g}(A),\mathfrak{g}(A^*),\omega_p)$ 
is a para-K$\ddot{a}$hler Lie algebra, where $\omega_p$ is given by
\begin{eqnarray*}
\omega_p(a+f,b+g)=\langle f,b\rangle -\langle g,a\rangle,\quad  a,b\in A,\;\;f, g\in A^*.
\end{eqnarray*}
\end{pro}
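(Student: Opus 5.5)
The plan is to route the equivalence through a matched pair of pre-Lie algebras on $A\oplus A^*$. The form $\omega_p$ is used to write down the only possible pre-Lie product on the double, and the compatibility conditions (\ref{Pb1})--(\ref{Pb2}) are then identified with the axioms of that product. Concretely, I define on $A\oplus A^*$ the product
\begin{align*}
(a+f)\cdot(b+g)=a\cdot_A b+(L^*_{\cdot_A}-R^*_{\cdot_A})(a)g-R^*_{\cdot_A}(b)f+f\cdot_{A^*}g+(L^*_{\cdot_{A^*}}-R^*_{\cdot_{A^*}})(f)b-R^*_{\cdot_{A^*}}(g)a .
\end{align*}
Here $(L^*-R^*,-R^*)$ is the dual of the regular representation of a pre-Lie algebra, in the convention of item (4) of the Notations. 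The first step is a direct pairing computation, using only the definition of $\omega_p$. It shows that this product is exactly the one determined by requiring
\begin{align*}
\omega_p(u\cdot v,w)=-\omega_p(v,[u,w])\quad\text{for all } u,v,w\in A\oplus A^*,
\end{align*}
together with $A$ and $A^*$ being subalgebras carrying $\cdot_A$ and $\cdot_{A^*}$. I would check this separately for the four types of triples $(a,b,h)$, $(a,g,c)$, $(f,b,c)$, and so on.

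The second step is to show that $(A\oplus A^*,\cdot)$ is pre-Lie if and only if (\ref{Pb1}) and (\ref{Pb2}) hold. Since $A$ and $A^*$ are already pre-Lie, the pre-Lie identity on the double splits into mixed identities, namely the matched pair conditions. I would test each against pure tensors of type $a,b\in A$, $h\in A^*$ and dualize. Using $\langle f\cdot_{A^*}g,a\rangle=\langle f\otimes g,\delta(a)\rangle$, the conditions of type $(A,A,A^*)$ become (\ref{Pb1}) and (\ref{Pb2}). By the symmetry of the construction, the conditions of type $(A^*,A^*,A)$ turn out to be equivalent to the same two equations. This dualization, together with the bookkeeping of $\tau$ and the signs coming from the $f^*$ convention, is the main obstacle. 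I would organize it by pairing everything against $h\otimes g$ and comparing term by term with (\ref{Pb1})--(\ref{Pb2}).

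With these two steps the equivalence follows. For ``$\Rightarrow$'': if $(A,\cdot,\delta)$ is a pre-Lie bialgebra, the double is pre-Lie, so its commutator is a Lie algebra $\mathfrak{g}(A)\bowtie\mathfrak{g}(A^*)$ with $\mathfrak{g}(A)$ and $\mathfrak{g}(A^*)$ as Lie subalgebras. To see that $\omega_p$ is a 2-cocycle, I use Eq.~(\ref{Sp}) and the skew-symmetry of $\omega_p$:
\begin{align*}
\omega_p([u,v],w)=\omega_p(u\cdot v,w)-\omega_p(v\cdot u,w)=-\omega_p(v,[u,w])+\omega_p(u,[v,w]).
\end{align*}
Summing cyclically, the right-hand sides cancel. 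Finally, $A$ and $A^*$ are clearly isotropic for $\omega_p$.

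For ``$\Leftarrow$'', I interpret the Lie bracket on $A\oplus A^*$ as the commutator Lie algebra $\mathfrak{g}(A)\bowtie\mathfrak{g}(A^*)$, that is, the bracket built from the matched pair of sub-adjacent Lie algebras. Proposition~\cite{Ch} then gives a compatible pre-Lie product on the double via Eq.~(\ref{Sp}). Since $A$ is isotropic of half dimension, we have $A^{\perp}=A$. For $x,y,z\in A$ we get $\omega_p(x\cdot y,z)=-\omega_p(y,[x,z])=0$, so $A$ is a pre-Lie subalgebra, and likewise $A^*$. The defining identity then forces the restricted products to coincide with $\cdot_A$ and $\cdot_{A^*}$, and the mixed products to be the ones in the first step. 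Since the product is pre-Lie, the second step yields (\ref{Pb1})--(\ref{Pb2}), so $(A,\cdot,\delta)$ is a pre-Lie bialgebra.
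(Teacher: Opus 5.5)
The paper does not prove this statement itself (it is quoted from \cite{B}), and your route---the double product on $A\oplus A^*$ built from the dual regular representations, pre-Lie-ness of the double being equivalent to (\ref{Pb1})--(\ref{Pb2}) through the matched-pair conditions, and Chu's compatible product for the converse---is essentially Bai's original argument, so the proposal is sound. One small correction: in the cocycle step, the cyclic sum of your right-hand sides does not cancel but equals $-2$ times $\omega_p([u,v],w)+\omega_p([v,w],u)+\omega_p([w,u],v)$, which only shows that three times this sum vanishes (no information in characteristic $3$); instead, use skew-symmetry to rewrite your displayed identity as $\omega_p([u,v],w)=-\omega_p([v,w],u)-\omega_p([w,u],v)$, which is already the 2-cocycle condition.
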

 
 Building on the above works, we study the following proposition, which is vital.
 
\begin{pro}\label{Sp2}
Let $(A, \succ, \prec,\rho)$ be a quadratic Leibniz-dendriform algebra, $(B, \diamond)$ be 
a Zinbiel algebra with a non-degenerate bilinear form $\nu$, and $(A\otimes B, \cdot)$ be the induced pre-Lie algebra from
$(A, \succ, \prec,\rho)$ by $(B, \diamond)$. Then $(A\otimes B, \cdot, \omega)$ is a quadratic pre-Lie algebra with $\omega$ defined by
\begin{align*}
\omega(a\otimes x,b\otimes y)=\rho(a,b)\nu(x, y),\;\;\;a, b\in A,\;\;x, y\in B,
\end{align*}
if and only if $\nu$ is skew-symmetric and the following equality holds for all $a,b,c\in A$ and $x,y,z\in B$:
\begin{align*}
\rho( b,a\circ c)\Big(\nu(y,x\diamond z)-\nu(x\diamond y,z)-\nu(y\diamond x, z)\Big)
-\rho(b,c\circ a)\Big(\nu(y\diamond x,z)+\nu(y,z\diamond x)\Big)=0.
\end{align*}
\end{pro}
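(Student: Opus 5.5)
The plan is to verify the two defining requirements of a quadratic pre-Lie algebra separately: skew-symmetry and non-degeneracy of $\omega$, and the invariance identity \eqref{Sp}. In each case I reduce to pure tensors $X=a\otimes x$, $Y=b\otimes y$, $Z=c\otimes z$, which suffices by bilinearity.

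\emph{Form.} Non-degeneracy of $\omega$ is automatic, since it is the tensor product of the non-degenerate forms $\rho$ and $\nu$. For skew-symmetry, $\rho$ is symmetric, so
\begin{equation*}
\omega(a\otimes x,b\otimes y)+\omega(b\otimes y,a\otimes x)=\rho(a,b)\big(\nu(x,y)+\nu(y,x)\big).
\end{equation*}
Since $\rho\neq 0$ by non-degeneracy, we may choose $a,b$ with $\rho(a,b)\neq 0$. Hence $\omega$ is skew-symmetric if and only if $\nu$ is.

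\emph{Invariance, left-hand side.} By Eq.~\eqref{Pd1},
\begin{equation*}
\omega(X\cdot Y,Z)=\rho(a\succ b,c)\,\nu(x\diamond y,z)-\rho(b\prec a,c)\,\nu(y\diamond x,z).
\end{equation*}
I then rewrite both $\rho$-factors so that $b$ sits in the first slot, using the invariance conditions \eqref{C1} together with the symmetry of $\rho$:
\begin{equation*}
\rho(a\succ b,c)=-\rho(b,a\circ c),\qquad \rho(b\prec a,c)=\rho(b,a\circ c+c\circ a).
\end{equation*}
This gives
\begin{equation*}
\omega(X\cdot Y,Z)=-\rho(b,a\circ c)\,\nu(x\diamond y,z)-\rho(b,a\circ c+c\circ a)\,\nu(y\diamond x,z).
\end{equation*}

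\emph{Invariance, right-hand side.} I compute the commutator $[X,Z]=X\cdot Z-Z\cdot X$. Grouping the terms with $\circ=\succ+\prec$, it collapses to
\begin{equation*}
[X,Z]=a\circ c\otimes x\diamond z-c\circ a\otimes z\diamond x.
\end{equation*}
Therefore
\begin{equation*}
-\omega(Y,[X,Z])=-\rho(b,a\circ c)\,\nu(y,x\diamond z)+\rho(b,c\circ a)\,\nu(y,z\diamond x).
\end{equation*}

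\emph{Comparison.} Subtracting the two expressions and collecting the coefficients of $\rho(b,a\circ c)$ and $\rho(b,c\circ a)$ yields exactly
\begin{equation*}
-\omega(Y,[X,Z])-\omega(X\cdot Y,Z)=\rho(b,a\circ c)\big(\nu(y,x\diamond z)-\nu(x\diamond y,z)-\nu(y\diamond x,z)\big)-\rho(b,c\circ a)\big(\nu(y\diamond x,z)+\nu(y,z\diamond x)\big).
\end{equation*}
So Eq.~\eqref{Sp} holds for all pure tensors if and only if the displayed identity of the statement holds. Together with the skew-symmetry criterion above, this gives the equivalence.

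The only delicate point is the bookkeeping in the middle step: applying \eqref{C1} with the correct argument order, given that $\rho$ is symmetric but $\circ$ is not commutative. A sign slip there would spoil the exact match with the stated condition, so I would double-check the two rewrites of $\rho(a\succ b,c)$ and $\rho(b\prec a,c)$ carefully.
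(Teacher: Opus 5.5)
Your proposal is correct and follows the paper's proof: a direct computation on pure tensors that rewrites $\rho(a\succ b,c)$ and $\rho(b\prec a,c)$ via \eqref{C1} and collapses $[X,Z]$ to $a\circ c\otimes x\diamond z-c\circ a\otimes z\diamond x$. You also make explicit the skew-symmetry and non-degeneracy of $\omega$, which the paper leaves implicit; the one slip is harmless, since your displayed combination actually equals $\omega(X\cdot Y,Z)+\omega(Y,[X,Z])$ rather than its negative, and only its vanishing matters.
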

\begin{proof} By direct computations, we have for all $a, b,c\in A,\;\;x, y,z\in B$,
\begin{align*}
	&\omega((a\otimes x)\cdot (b\otimes y),c\otimes z)+\omega(b\otimes y,[a\otimes x,c\otimes z ])
\\=&\rho(a\succ b,c)\nu(x\diamond y,z)-\rho(b\prec a,c)\nu(y\diamond x,z)
	+\rho(b,a\succ c)\nu(y,x\diamond z)\\&-\rho(b,c\prec a)\nu(y,z\diamond x)
	-\rho(b,c\succ a)\nu(y,z\diamond x)+\rho(b,a\prec c)\nu(y,x\diamond z)
\\=&-\rho( b,a\circ c)\nu(x\diamond y,z)-\rho(b,c\circ a+a\circ c)\nu(y\diamond x,z)
\\&+\rho(b,a\circ c)\nu(y,x\diamond z)-\rho(b,c\circ a)\nu(y,z\diamond x)
\\=&-\rho( b,a\circ c)\Big(\upsilon(x\diamond y,z)+\nu(y\diamond x, z)-\nu(y,x\diamond z)\Big)
-\rho(b,c\circ a)\Big(\nu(y\diamond x,z)+\nu(y,z\diamond x)\Big)
\end{align*}
This finishes the proof.
\end{proof}

 Recall that a {\bf quadratic Zinbiel algebra} \cite{B2, ZLS} is a triple $(B,\diamond, \nu)$, where 
$(B, \diamond)$ is a Zinbiel algebra and $\nu$ is a nondegenerate skew-symmetric bilinear form on $A$ satisfying the invariance condition
\begin{equation*}
	\nu(a_{1}\diamond a_{2}, \; a_{3}) = \nu(a_{2}, \; a_{1}\diamond a_{3} + a_{3}\diamond a_{1}), \;\; \forall~ a_{1}, a_{2}, a_{3} \in B.
\end{equation*}
If $(B, \diamond, \nu)$ is a quadratic Zinbiel algebra, 
then one easily checks that 
\begin{equation*}\nu(a_{1} \diamond a_{2},\; a_{3}) = -\nu(a_{1},\; a_{3} \diamond a_{2}).\end{equation*} 

By Proposition \ref{Sp2}, if the non-degenerate skew-symmetric bilinear form $\nu$ satisfies the condition
$\nu(x\diamond y,z)+\nu(y\diamond x, z)-\nu(y,x\diamond z)$=0, that is, $(B, \diamond,\nu)$ is a quadratic Zinbiel algebra,
then $(A\otimes B, \cdot, \omega)$ is a quadratic pre-Lie algebra.
Consequently, we study how to construct (completed) pre-Lie bialgebras from 
Leibniz-dendriform bialgebras and quadratic Zinbiel algebras.

\begin{defi}\cite{LZB} \label{Tp6}
Let $(B=\oplus_{i\in\mathbb{Z}}B_{i}, \diamond)$ be a $\mathbb{Z}$-graded (Zinbiel) algebra. A bilinear form $\omega$ on $B$
 is called {\bf graded}, if there exists some $m\in\mathbb{Z}$ such that $\omega(B_{i}, B_{j})=0$ when $i+j+m\neq0$.
A {\bf quadratic $\mathbb{Z}$-graded Zinbiel algebra}, denoted by $(B = \bigoplus_{i \in \mathbb{Z}} B_i, \diamond, \omega)$, 
is a $\mathbb{Z}$-graded Zinbiel algebra equipped with a nondegenerate skew-symmetric,
invariant and graded bilinear form $\omega$. In the special case where $B = B_0$, it reduces to a quadratic Zinbiel algebra.
\end{defi}

Given a quadratic $\mathbb{Z}$-graded (Zinbiel) algebra $(B=\oplus_{i\in\mathbb{Z}}B_{i}, \diamond, \omega)$.
Define bilinear forms
\begin{small}\begin{align*}\hat{\omega}:
(\underbrace{B\,\hat{\otimes}\,B\,\hat{\otimes}\,\cdots\,\hat{\otimes}\,
B}_{n\text{-fold}})\otimes(\underbrace{B\otimes B\otimes\cdots
\otimes B}_{n\text{-fold}})\longrightarrow \mathbb K,
\end{align*}\end{small}
for all $n\geq2$ by
\begin{small}\begin{align*}
\hat{\omega}\Big(\sum_{i_{1},\cdots,i_{n},\alpha} x_{1, i_{1}, \alpha}
\otimes\cdots\otimes x_{n, i_{n}, \alpha},\ \ y_{1}\otimes\cdots\otimes y_{n}\Big)
=\sum_{i_{1},\cdots,i_{n},\alpha}\prod_{j=1}^{n}
\omega(x_{j, i_{j}, \alpha},\; y_{j}).
\end{align*}\end{small}
It is easy to prove that $\hat{\omega}$ is left nondegenerate, that is, if
\begin{small}\begin{align*}
\hat{\omega}\Big(\sum_{i_{1}, \cdots, i_{n},\alpha}x_{1, i_{1}, \alpha}
\otimes\cdots\otimes x_{n, i_{n}, \alpha},\ \ y_{1}\otimes\cdots\otimes y_{n}\Big)
=\hat{\omega}\Big(\sum_{i_{1},\cdots,i_{n},\alpha} z_{1, i_{1}, \alpha}
\otimes\cdots\otimes z_{n, i_{n}, \alpha},\ \ y_{1}\otimes\cdots\otimes y_{n}\Big),
\end{align*}\end{small}
for all homogeneous elements $y_{1}, y_{2},\cdots, y_{n}\in B$, then
\begin{align*}
\sum_{i_{1},\cdots,i_{n},\alpha} x_{1, i_{1}, \alpha}
\otimes\cdots\otimes x_{n, i_{n}, \alpha}
=\sum_{i_{1},\cdots,i_{n},\alpha} z_{1, i_{1}, \alpha}
\otimes\cdots\otimes z_{n, i_{n}, \alpha}.
\end{align*}

\begin{lem} \cite{HL} Let $(B=\oplus_{i\in\mathbb{Z}}B_{i}, \diamond, \omega)$ be a quadratic $\mathbb{Z}$-graded Zinbiel algebra.
Define a linear map $\eta_{\omega}: B\rightarrow B\otimes B$ by $\hat{\omega}(\eta_{\omega}
(b_{1}),\; b_{2}\otimes b_{3})=-\omega(b_{1},\; b_{2}\diamond b_{3})$ for any $b_{1}, b_{2},
b_{3}\in B$. Then $(B, \eta_{\omega})$ is a completed Zinbiel coalgebra. \end{lem}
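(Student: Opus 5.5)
The plan is to first make $\eta_\omega$ well defined as a map into $B\,\hat{\otimes}\,B$, and then transport the coassociativity-type identity \eqref{Cp1} to the Zinbiel identity \eqref{Z1} by pairing against $\hat{\omega}$. This pairing is left nondegenerate, as noted before the lemma.

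\emph{Well-definedness.} For homogeneous $b_1\in B_k$, the gradedness of $\omega$ (with shift $m$) forces the following: $\omega(b_1,b_2\diamond b_3)\neq 0$ with $b_2\in B_i$, $b_3\in B_j$ requires $k+i+j+m=0$. Since $\omega$ is nondegenerate and graded, the restriction $\omega: B_i\times B_{-i-m}\to\mathbb K$ is a nondegenerate pairing for each $i$. We assume, as in \cite{HL,LZB}, that each $B_i$ is finite-dimensional, so we can pick dual bases $\{e_{i,\alpha}\}$ of $B_i$ and $\{e^{i,\alpha}\}$ of $B_{-i-m}$ with $\omega(e^{i,\alpha},e_{i,\beta})=\delta_{\alpha\beta}$. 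We then define
\begin{align*}
\eta_\omega(b_1)=-\sum_{i+j=-k-m}\sum_{\alpha,\beta}\omega(b_1,e_{i,\alpha}\diamond e_{j,\beta})\,e^{i,\alpha}\otimes e^{j,\beta}\in B\,\hat{\otimes}\,B .
\end{align*}
This satisfies the defining relation, and it is unique by left nondegeneracy.

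\emph{Reduction of \eqref{Cp1}.} Both sides of \eqref{Cp1} applied to $b_1$ are elements of $B^{\hat\otimes 3}$. It therefore suffices to pair each term with $b_2\otimes b_3\otimes b_4$ under $\hat{\omega}$ and compare. Iterating the defining relation gives the three basic pairings
\begin{align*}
\hat{\omega}((I\hat{\otimes}\eta_\omega)\eta_\omega(b_1),b_2\otimes b_3\otimes b_4)&=\omega(b_1,b_2\diamond(b_3\diamond b_4)),\\
\hat{\omega}((\eta_\omega\hat{\otimes}I)\eta_\omega(b_1),b_2\otimes b_3\otimes b_4)&=\omega(b_1,(b_2\diamond b_3)\diamond b_4).
\end{align*}
Each uses the relation twice, so the two minus signs cancel. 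Applying $\hat{\tau}\hat{\otimes}I$ simply swaps $b_2$ and $b_3$ in these formulas. The right-hand equality in \eqref{Cp1} then becomes
\begin{align*}
\omega\bigl(b_1,\, b_2\diamond(b_3\diamond b_4)-(b_2\diamond b_3)\diamond b_4-(b_3\diamond b_2)\diamond b_4\bigr)=0,
\end{align*}
which is exactly \eqref{Z1}. The left-hand equality, $(I\hat{\otimes}\eta)\eta=(\tau\hat{\otimes}I)(I\hat{\otimes}\eta)\eta$, becomes
\begin{align*}
b_2\diamond(b_3\diamond b_4)=b_3\diamond(b_2\diamond b_4).
\end{align*}
This left-commutativity follows from \eqref{Z1}, since its right-hand side is symmetric in $a_1,a_2$.

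\emph{Main obstacle.} The one step needing care is justifying the termwise manipulations in the completed tensor products, namely that $(\eta_\omega\hat{\otimes}I)$ applied to an infinite formal sum pairs correctly against finite tensors. Fixing the degrees of $b_2,b_3,b_4$ cuts each infinite sum down to finitely many homogeneous components, because the graded condition selects a single degree in each slot. After that reduction the computation is a finite one, and left nondegeneracy of $\hat\omega$ concludes the proof. Invariance and skew-symmetry of $\omega$ are not needed for this lemma; they only become relevant later, in the bialgebra compatibility.
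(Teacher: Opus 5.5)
Your proof is correct. Pairing each term of \eqref{Cp1} with homogeneous $b_2\otimes b_3\otimes b_4$ under $\hat{\omega}$ turns the two equalities into \eqref{Z1} and its left-commutative consequence. The degree bookkeeping you describe keeps every completed composition and pairing finite, and left nondegeneracy of $\hat{\omega}$ then finishes.

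The paper gives no argument of its own here; it only cites \cite{HL}. So there is no competing route to compare, and your dualization is the standard proof filling that gap.

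Two of your side remarks are worth keeping:
\begin{itemize}
\item The hypothesis that each $B_i$ is finite-dimensional is genuinely needed for $\eta_\omega$ to exist. The paper's definition does not state it, but the paper tacitly uses it, for instance in the homogeneous dual bases of Theorem~\ref{Tb4}.
\item You are right that skew-symmetry and invariance of $\omega$ play no role. Even the sign in the definition of $\eta_\omega$ is immaterial, since every term of \eqref{Cp1} is quadratic in $\eta$.
\end{itemize}
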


Assume that $(B=\oplus_{i\in\mathbb{Z}}B_{i}, \diamond,\omega)$ is a quadratic $\mathbb{Z}$-graded Zinbiel algebra.
Since $\hat{\omega}$ is left nondegenerate, it follows that
\begin{align}
&\label{Bg1}(I\otimes L_{\diamond}(x))\eta_{\omega}(y)=\eta_{\omega}(x\diamond y)-\hat{\tau}\eta_{\omega}(y\diamond x),
\\&\label{Bg2}(I\otimes R_{\diamond}(x))\eta_{\omega}(y)=\eta_{\omega}(y\diamond x)+\hat{\tau}\eta_{\omega}(y\diamond x),\\&
\label{Bg3}(I\otimes L_{\diamond}(x))\eta_{\omega}(y)=\eta_{\omega}(y\diamond x)-\hat{\tau}\eta_{\omega}(x\diamond y),\\&
\label{Bg4}(L_{\diamond}(x)\otimes I )\eta_{\omega}(y)
=(L_{\diamond}(y)\otimes I )\eta_{\omega}(x)=\eta_{\omega}(x\diamond y)+\eta_{\omega}(y\diamond x),\\&
\label{Bg5}(R_{\diamond}(y)\otimes I )\eta_{\omega}(x)
=-\eta_{\omega}(x\diamond y)-\eta_{\omega}(y\diamond x)-\hat{\tau}\eta_{\omega}(x\diamond y)
-\hat{\tau}\eta_{\omega}(y\diamond x)-\hat{\tau}(R_{\diamond}(x)\otimes I )\eta_{\omega}(y).\end{align}
One can show that the above equations hold; alternatively, the proof is given in \cite{HL}.

\begin{ex} \label{Zq2}
Assume that \[V=\operatorname{span}_{\mathbb K}\{v_1,v_2,v_3,v_1^{*},v_2^{*},v_3^{*}\}\] is the $6$-dimensional vector space.
Define a binary operation $\diamond$ on $A$ by 
\begin{align*}
&v_1\diamond v_1=v_2,~~~v_1\diamond v_2=v_3,~~~~v_2\diamond v_1=\frac{1}{2}v_3,\\&
v_1\diamond v_2^{*}=2v_1^{*},~~~~v_1\diamond v_3^{*}=\frac{3}{2}v_2^{*},~~~v_2\diamond v_3^{*}=\frac{3}{2}v_1^{*}\\&    
v_2^{*}\diamond v_1=-v_1^{*},~~~v_3^{*}\diamond v_1=-\frac{1}{2}v_2^{*},~~~ v_3^{*}\diamond v_2=-v_1^{*}.
\end{align*}
and all other products equal to zero. By direct computations, $(V,\diamond)$ is a (left) Zinbiel algebra.
Moreover, define a skew-symmetric bilinear form $\epsilon$ on $V$ by
$\epsilon(v_i,v_i^{*})=1$ with the remaining values determined by skew-symmetry. Then
$(V,\diamond,\epsilon)$ is a quadratic Zinbiel algebra.
Let 
\[\widehat V=V\otimes \mathbb K[t,t^{-1}]=\bigoplus_{n\in\mathbb Z}(V\otimes t^n)\]
equipped with \[(x\otimes t^m)\diamond(y\otimes t^n)=(x\diamond y)\otimes t^{m+n}\]
and \[\omega(x\otimes t^m,y\otimes t^n)=\epsilon(x,y)\delta_{m+n,0}\]
then $(V,\diamond,\omega)$ is a quadratic $\mathbb Z$-graded Zinbiel algebra.
The induced comultiplication is given by~(denote $xt^m=x\otimes t^m,~~\forall~x\in V$),
\[
\begin{aligned}
\eta_{\omega}(v_1t^n) &=\sum_{i+j=n} 2\, v_1^* t^i\otimes v_2t^j 
+ \frac{3}{2}\, v_2^* t^i\otimes v_3t^j - v_2t^i \otimes v_1^*t^j - v_3t^i \otimes v_2^*t^j, \\
\eta_{\omega}(v_2t^n) &=\sum_{i+j=n} \frac{3}{2}\, v_1^* t^i\otimes v_3t^j - \frac{1}{2}\, v_3t^i \otimes v_1^*t^j, \\
\eta_{\omega}(v_3t^n) &= \eta_{\omega}(v_1^*t^n) = 0, \\
\eta_{\omega}(v_2^*t^n) &=\sum_{i+j=n} v_1^*t^i \otimes v_1^*t^j, \\
\eta_{\omega}(v_3^*t^n) &=\sum_{i+j=n} v_1^*t^i \otimes v_2^*t^j + \frac{1}{2}\, v_2^* t^i\otimes v_1^*t^j.
\end{aligned}
\]
\end{ex}

We lift Proposition \ref{Tb1} and Proposition \ref{Tb2} to the bialgebra setting.

\begin{thm}\label{Tb3}
Let $(A, \succ,\prec,\Delta_{\succ}, \Delta_{\prec})$ be a Leibniz-dendriform bialgebra
and $(B=\oplus_{i\in\mathbb{Z}}B_{i}, \diamond,\omega)$ be a quadratic $\mathbb{Z}$-graded Zinbiel algebra.
Define a binary operation $\cdot$ on $A\otimes B$ by Eq.~\eqref{Pd1}, and
a linear map $\delta: A\otimes B\rightarrow(A\otimes B)
\,\hat{\otimes}\,(A\otimes B)$ by Eq.~\eqref{P1}.
Then $(A\otimes B,\cdot,\delta)$ is a completed pre-Lie bialgebra.
\end{thm}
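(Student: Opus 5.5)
By Proposition~\ref{Tb1}, $(A\otimes B,\cdot)$ is a pre-Lie algebra. By the preceding lemma of \cite{HL}, $(B,\eta_\omega)$ is a completed Zinbiel coalgebra. Proposition~\ref{Tb2} then shows that $(A\otimes B,\delta)$, with $\delta$ built from $\eta=\eta_\omega$, is a completed pre-Lie coalgebra. Moreover $A\otimes B=\oplus_i A\otimes B_i$ is $\mathbb{Z}$-graded, so $(A\otimes B)\hat{\otimes}(A\otimes B)$ makes sense. It therefore remains only to verify the two compatibility conditions \eqref{Pb1} and \eqref{Pb2} for pure tensors $a\otimes x$, $b\otimes y$.

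For each condition we expand every term with \eqref{Pd1} and \eqref{P1}. Each term then takes the form (an element of $A\otimes A$ built from $\Delta_\succ,\Delta_\prec$, the operations $\succ,\prec$ and $\tau$) $\bullet$ (an element of $B\hat\otimes B$ built from $\eta_\omega$, $\diamond$ and $\hat\tau$). For instance, $\delta((a\otimes x)\cdot(b\otimes y))$ contributes $\Delta_\succ(a\succ b)\bullet\eta_\omega(x\diamond y)-\Delta_\succ(b\prec a)\bullet\eta_\omega(y\diamond x)-\tau\Delta_\prec(a\succ b)\bullet\hat\tau\eta_\omega(x\diamond y)+\dots$. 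Terms such as $(L_\cdot(a\otimes x)\hat\otimes I)\delta(b\otimes y)$ produce $B$-factors like $(L_\diamond(x)\otimes I)\eta_\omega(y)$, $(I\otimes R_\diamond(x))\eta_\omega(y)$, or the same with $\hat\tau$ applied.

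The key normalization step is to rewrite every such $B$-factor, using \eqref{Bg1}--\eqref{Bg5} (which follow from invariance, skew-symmetry and left nondegeneracy of $\hat\omega$), as a combination of a small fixed set of ``basis'' expressions:
\[\eta_\omega(x\diamond y),\quad \eta_\omega(y\diamond x),\quad \hat\tau\eta_\omega(x\diamond y),\quad \hat\tau\eta_\omega(y\diamond x),\]
possibly together with a residual term $\hat\tau(R_\diamond(x)\otimes I)\eta_\omega(y)$ and its analogue with $x,y$ interchanged. After collecting coefficients, each condition reduces to the vanishing of the $A\otimes A$-coefficient of each basis expression. Each such coefficient should be exactly one of \eqref{B1}--\eqref{B6}, possibly after applying $\tau$ or swapping $a,b$. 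This is the same mechanism as in the proof of Proposition~\ref{Tb2}, where terms were grouped as $A(1),\dots,A(6)$. Heuristically, \eqref{B1}--\eqref{B3} should arise from \eqref{Pb1} (the symmetric combination $\delta-\hat\tau\delta$ naturally produces $\Delta$ and the products $x\odot y$, $\Delta_\odot$), while \eqref{B4}--\eqref{B6} should arise from \eqref{Pb2} (the commutator $a\cdot b-b\cdot a$ produces $x\circ y$).

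The main obstacle is the bookkeeping in this reduction. First, one must choose the expansions \eqref{Bg1}/\eqref{Bg3} consistently so that the coefficient groupings match the bialgebra axioms exactly. Second, the $\hat\tau(R_\diamond\otimes I)\eta_\omega$ residues from \eqref{Bg5} must cancel among themselves. For that cancellation I would first try to pair the terms $(R_\cdot(b\otimes y)\hat\otimes I)\hat\tau\delta(a\otimes x)$ and $(I\hat\otimes R_\cdot(b\otimes y))\delta(a\otimes x)$ in \eqref{Pb1}, expecting their residues to combine into \eqref{B3} and \eqref{B6}-type coefficients, which carry $R_\prec$ and $\tau\Delta_\prec$. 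If a residue fails to cancel directly, I would test the identity against $\hat\omega(\,\cdot\,,z\otimes w)$ and use invariance of $\omega$ to move all $B$-operations onto $z,w$. This turns each $B$-factor into a scalar $\omega(x,\cdots)$ or $\omega(y,\cdots)$, so the coefficient comparison can be done at the level of scalars, and left nondegeneracy then returns the identity.
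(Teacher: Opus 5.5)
Your plan is the paper's proof. You reduce to \eqref{Pb1}--\eqref{Pb2} via Propositions~\ref{Tb1}--\ref{Tb2} and expand with \eqref{Pd1} and \eqref{P1}. You then normalize the $B$-factors onto $\eta_\omega(x\diamond y)$, $\eta_\omega(y\diamond x)$, their $\hat\tau$-images, and the two $R_\diamond$-residues $(R_\diamond(x)\otimes I)\eta_\omega(y)$ and $\hat\tau(R_\diamond(x)\otimes I)\eta_\omega(y)$, and each $A\otimes A$-coefficient is killed by \eqref{B1}--\eqref{B3} for \eqref{Pb1} and by \eqref{B3}--\eqref{B6} for \eqref{Pb2}. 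One caution: \eqref{Bg1} and \eqref{Bg3} are not interchangeable expansions, because they assign different values to the same element $(I\otimes L_\diamond(x))\eta_\omega(y)$, and \eqref{Bg3} is false (in Example~\ref{Zq2} with $x=v_1$, $y=v_3^*$, ignoring the $t$-grading, the correct value is $2v_1^*\otimes v_1^*$, which \eqref{Bg1} gives, whereas \eqref{Bg3} gives $-2v_1^*\otimes v_1^*$), so normalize with \eqref{Bg1}, \eqref{Bg2}, \eqref{Bg4}, \eqref{Bg5} only, which already cover every $B$-factor that occurs.
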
 

 \begin{proof}
According to Proposition \ref{Tb1} and Proposition  \ref{Tb2}, we only need
to check that Eqs.~\eqref{Pb1}-\eqref{Pb2} hold. Using Eqs.~\eqref{P1} and \eqref{Pd1}, we have
\begin{small} \begin{align*}&
(\delta-\hat{\tau}\delta)((a\otimes x)\cdot (b\otimes y))-(L_{\cdot}(a\otimes x)\hat{\otimes} I)(\delta
-\hat{\tau}\delta)(b\otimes y)-(I\hat{\otimes} L_{\cdot}(a\otimes x))(\delta-\hat{\tau}\delta)(b\otimes y)
\\&-(I\hat{\otimes} R_{\cdot}(b\otimes y))\delta(a\otimes x)
 -( R_{\cdot}(b\otimes y)\hat{\otimes} I)\hat{\tau}\delta(a\otimes x)
\\=&\Delta(a\succ b)\bullet \eta_{\omega} (x\diamond y)-\Delta(b\prec a)\bullet \eta_{\omega} (y\diamond x)
-\tau\Delta(a\succ b)\bullet \hat{\tau}\eta_{\omega} (x\diamond y)+\tau\Delta(b\prec a)\bullet \hat{\tau}\eta_{\omega} (y\diamond x)\\&
+(L_{\succ}(a)\otimes I)\tau\Delta(b)\bullet (L_{\diamond}(x)\otimes I)\hat{\tau}\eta_{\omega} (y)
 -(L_{\succ}(a)\otimes I)\Delta(b)\bullet (L_{\diamond}(x)\otimes I)\eta_{\omega} (y)
 \\&+(R_{\prec}(a)\otimes I)\Delta(b)\bullet (R_{\diamond}(x)\otimes I)\eta_{\omega} (y)
 - (R_{\prec}(a)\otimes I)\tau\Delta(b)\bullet (R_{\diamond}(x)\otimes I)\hat{\tau}\eta_{\omega} (y)
 \\&+(I\otimes L_{\succ}(a))\tau\Delta(b)\bullet (I\otimes L_{\diamond}(x))\hat{\tau}\eta_{\omega} (y)
 -(I\otimes L_{\succ}(a))\Delta(b)\bullet (I\otimes L_{\diamond}(x))\eta_{\omega} (y)
 \\&+(I\otimes R_{\prec}(a))\Delta(b)\bullet (I\otimes R_{\diamond}(x))\eta_{\omega} (y)-
 (I\otimes R_{\prec}(a))\tau\Delta(b)\bullet (I\otimes R_{\diamond}(x))\hat{\tau}\eta_{\omega} (y)\\&
 -(I\otimes R_{\succ}(b))\Delta_{\succ}(a)\bullet (I\otimes R_{\diamond}(y))\eta_{\omega} (x)
 +(I\otimes L_{\prec}(b))\Delta_{\succ}(a)\bullet (I\otimes L_{\diamond}(y))\eta_{\omega} (x)
 \\&+(I\otimes R_{\succ}(b))\tau\Delta_{\prec}(a)\bullet (I\otimes R_{\diamond}(y))\hat{\tau}\eta_{\omega} (x)-
  (I\otimes L_{\prec}(b))\tau\Delta_{\prec}(a)\bullet (I\otimes L_{\diamond}(y))\hat{\tau}\eta_{\omega} (x)
  \\&+(R_{\succ}(b)\otimes I)\tau\Delta_{\succ}(a)\bullet ( R_{\diamond}(y)\otimes I)\hat{\tau}\eta_{\omega} (x)
 +( L_{\prec}(b)\otimes I)\tau\Delta_{\succ}(a)\bullet ( L_{\diamond}(y)\otimes I)\hat{\tau}\eta_{\omega} (x)
 \\&-(R_{\succ}(b)\otimes I)\Delta_{\prec}(a)\bullet ( R_{\diamond}(y)\otimes I)\eta_{\omega} (x)
 +( L_{\prec}(b)\otimes I)\Delta_{\prec}(a)\bullet ( L_{\diamond}(y)\otimes I)\eta_{\omega} (x).
\end{align*}\end{small}
 Combining Eqs.~\eqref{Bg1}-\eqref{Bg5}, we get that
 \begin{small} \begin{align*}&
(\delta-\hat{\tau}\delta)((a\otimes x)\cdot (b\otimes y))-(L_{\cdot}(a\otimes x)\hat{\otimes} I)(\delta
-\hat{\tau}\delta)(b\otimes y)-(I\hat{\otimes} L_{\cdot}(a\otimes x))(\delta-\hat{\tau}\delta)(b\otimes y)
\\&-(I\hat{\otimes} R_{\cdot}(b\otimes y))\delta(a\otimes x)
 -( R_{\cdot}(b\otimes y)\hat{\otimes} I)\hat{\tau}\delta(a\otimes x)
\\=&B(1)+B(2)+B(3)+B(4)+B(5)+B(6),
 \end{align*}\end{small}
 where
\begin{small} \begin{align*}B(1)=&\Big(\Delta(a\succ b)-(I\otimes R_{\succ}(b))\Delta_{\succ}(a)
+(R_{\odot}(b)\otimes I)\tau\Delta_{\succ}(a)+(R_{\odot}(b)\otimes I)\Delta_{\prec}(a)
\\&-(L_{\succ}(a)\otimes I+I\otimes L_{\succ}(a))\Delta(b)
-(I\otimes R_{\succ}(b))\tau\Delta_{\prec}(a)\Big)\bullet \eta_{\omega} (x\diamond y),
\\B(2)=&\Big((I\otimes L_{\prec}(b))\Delta_{\succ}(a)-\Delta(b\prec a)
+(R_{\odot}(b)\otimes I)\Delta_{\prec}(a)-(I\otimes R_{\succ}(b))\tau\Delta_{\prec}(a)
\\&+(I\otimes R_{\prec}(a)-L_{\succ}(a)\otimes I)\Delta(b)
-(L_{\odot}(a)\otimes I)\tau\Delta(b)\Big)\bullet \eta_{\omega} (y\diamond x),
\\B(3)=&\Big((R_{\succ}(b)\otimes I)(\tau\Delta_{\succ}(a)+\Delta_{\prec}(a))
-(I\otimes R_{\odot}(b))\Delta_{\succ}(a)-\tau\Delta(a\succ b)
\\&-(I\otimes R_{\odot}(b))\tau\Delta_{\prec}(a)
+(L_{\succ}(a)\otimes I+I\otimes L_{\succ}(a))\tau\Delta( b)\Big)\bullet \hat{\tau}\eta_{\omega} (x\diamond y),
\\B(4)=&\Big(\tau\Delta( b\prec a)-(I\otimes R_{\odot}(b) )\tau\Delta_{\prec}(a)
-(L_{\prec}(b)\otimes I )\tau\Delta_{\succ}(a)+(R_{\succ}(b)\otimes I )\Delta_{\prec}(a)\\&
+(I\otimes L_{\odot}(a))\Delta(b)+(R_{\prec}(a)\otimes I+I\otimes L_{\succ}(a))\tau\Delta(b)
\Big)\bullet \hat{\tau}\eta_{\omega} (y\diamond x),
\\B(5)=&\Big(
(R_{\prec}(a)\otimes I )\Delta(b)-(I\otimes R_{\succ}(b) )\tau\Delta_{\prec}(a)
\Big)\bullet (R_{\diamond}(x)\otimes I)\eta_{\omega} (y),
\\B(6)=&\Big((R_{\succ}(b)\otimes I )\Delta_{\prec}(a)-(I\otimes R_{\prec}(a) )\tau\Delta(b)
\Big)\bullet \hat{\tau}(R_{\diamond}(x)\otimes I)\eta_{\omega} (y).
\end{align*}\end{small}
Then
B(1)=B(3)=0 follows from Eq.~\eqref{B2}, B(2)=B(4)=0 follows from Eqs.~\eqref{B1}-\eqref{B2}
 and B(5)=B(6)=0 follows from Eq.~\eqref{B3}. It follows that Eq.~\eqref{Pb1} holds.
By Eqs.~\eqref{P1} and \eqref{Pd1}, we get
\begin{small} \begin{align*}&
\delta\Big((a\otimes x)\cdot (b\otimes y)-(b\otimes y)\cdot (a\otimes x)\Big)-\Big((I\otimes (R_{\cdot}-L_{\cdot})(b\otimes y))\Big)\delta(a\otimes x)
\\&-\Big(I \otimes(L_{\cdot}(a\otimes x)-R_{\cdot}(a\otimes x))\Big)\delta(b\otimes y)
-\Big(L_{\cdot}(a\otimes x)\otimes I\Big)\delta(b\otimes y)+\Big(L_{\cdot}(b\otimes y)\otimes I\Big)\delta(a\otimes x)
\\=&\Delta_{\succ}(a\succ b+a\prec b)\bullet \eta_{\omega} (x\diamond y)-\Delta_{\succ}(b\succ a+b\prec a)\bullet \eta_{\omega} (y\diamond x)
\\&-\tau\Delta_{\prec}(a\succ b+a\prec b)\bullet \hat{\tau}\eta_{\omega} (x\diamond y)
+\tau\Delta_{\prec}(b\succ a+b\prec a)\bullet \tau\eta_{\omega} (y\diamond x),
\\&+(I\otimes L_{\circ}(b))\Delta_{\succ}(a)\bullet (I\otimes L_{\diamond}(y))\eta_{\omega} ( x)
-(I\otimes R_{\circ}(b))\Delta_{\succ}(a)\bullet (I\otimes R_{\diamond}(y))\eta_{\omega} ( x)
\\&-(I\otimes L_{\circ}(b))\tau\Delta_{\prec}(a)\bullet (I\otimes L_{\diamond}(y))\hat{\tau}\eta_{\omega} ( x)
+(I\otimes R_{\circ}(b))\tau\Delta_{\prec}(a)\bullet (I\otimes R_{\diamond}(y))\hat{\tau}\eta_{\omega} ( x)
\\&-(I\otimes L_{\circ}(a))\Delta_{\succ}(b)\bullet (I\otimes L_{\diamond}(x))\eta_{\omega} ( y)
+(I\otimes R_{\circ}(a))\Delta_{\succ}(b)\bullet (I\otimes R_{\diamond}(x))\eta_{\omega} ( y)
\\&+(I\otimes L_{\circ}(a))\tau\Delta_{\prec}(b)\bullet (I\otimes L_{\diamond}(x))\hat{\tau}\eta_{\omega} ( y)
+(I\otimes R_{\circ}(a))\tau\Delta_{\prec}(b)\bullet (I\otimes R_{\diamond}(x))\hat{\tau}\eta_{\omega} ( y)
\\&+(L_{\succ}(a)\otimes I)\tau\Delta_{\prec}(b)\bullet ( L_{\diamond}(x)\otimes I)\hat{\tau}\eta_{\omega} ( y)
-(R_{\prec}(a)\otimes I)\tau\Delta_{\prec}(b)\bullet ( R_{\diamond}(x)\otimes I)\hat{\tau}\eta_{\omega} ( y)
\\&-(L_{\succ}(a)\otimes I)\Delta_{\succ}(b)\bullet ( L_{\diamond}(x)\otimes I)\eta_{\omega} ( y)
+(R_{\prec}(a)\otimes I)\Delta_{\succ}(b)\bullet ( R_{\diamond}(x)\otimes I)\eta_{\omega} ( y)
\\&-(L_{\succ}(b)\otimes I)\tau\Delta_{\prec}(a)\bullet ( L_{\diamond}(y)\otimes I)\hat{\tau}\eta_{\omega} ( x)
+(R_{\prec}(b)\otimes I)\tau\Delta_{\prec}(a)\bullet ( R_{\diamond}(y)\otimes I)\hat{\tau}\eta_{\omega} ( x)
\\&+(L_{\succ}(b)\otimes I)\Delta_{\succ}(a)\bullet ( L_{\diamond}(y)\otimes I)\eta_{\omega} ( x)
-(R_{\prec}(b)\otimes I)\Delta_{\succ}(a)\bullet ( R_{\diamond}(y)\otimes I)\eta_{\omega} ( x)\\
=&B(7)+B(8)+B(9)+B(10)+B(11)+B(12)+B(13)+B(14).\end{align*}\end{small}
Combining Eqs.~\eqref{Bg1}-\eqref{Bg5}, we get that
\begin{small} \begin{align*}B(7)=&\Big(\Delta_{\succ}(a\circ b)+(L_{\odot}(b)\otimes I-I\otimes R_{\circ}(b))\Delta_{\succ}(a)+
(L_{\odot}(b)\otimes I-I\otimes R_{\circ}(b))\tau\Delta_{\prec}(a)
\\&-(I\otimes L_{\circ}(a)+L_{\succ}(a)\otimes I)\Delta_{\succ}(b)\Big)\bullet \eta_{\omega} (x\diamond y),
\\B(8)=&\Big((I\otimes L_{\circ}(b)+L_{\odot}(b)\otimes I)\Delta_{\succ}(a)-\Delta_{\succ}(b\circ a)
-(L_{\odot}(a)\otimes I)\tau\Delta_{\prec}(b)
\\&+(I\otimes R_{\circ}(a)-L_{\succ}(a)\otimes I)\Delta_{\succ}(b)-(I\otimes R_{\circ}(b))\tau\Delta_{\prec}(a)
\Big)\bullet \eta_{\omega} (y\diamond x),
\\B(9)=&\Big(\tau\Delta_{\prec}(b\circ a)+
(I\otimes L_{\star}(a))(\Delta_{\succ}(b)-(I\otimes L_{\star}(b))
\tau\Delta_{\prec}(a))+
(I\otimes L_{\circ}(a)-R_{\prec}(a)\otimes I)\tau\Delta_{\prec}(b)\\&-
(L_{\succ}(b)\otimes I)\tau\Delta_{\prec}(a)+(R_{\prec}(b)\otimes I)\Delta_{\succ}(a)
\Big)\bullet \hat{\tau}\eta_{\omega} (y\diamond x),
\\B(10)=&\Big((I\otimes L_{\circ}(a))\tau\Delta_{\prec}(b)-\tau\Delta_{\prec}(a\circ b)-
(I\otimes L_{\star}(b))(\Delta_{\succ}(a)+\tau\Delta_{\prec}(a))
\\&+(L_{\succ}(a)\otimes I)\Delta_{\prec}(b)
+(R_{\prec}(b)\otimes I)(\Delta_{\succ}(a)+\tau\Delta_{\prec}(a))
\Big)\bullet \hat{\tau}\eta_{\omega} (x\diamond y),
\\B(11)=&\Big((R_{\prec}(a)\otimes I)\Delta_{\succ}(b)-(I\otimes R_{\circ}(b))\tau\Delta_{\prec}(a)
\Big)\bullet (R_{\diamond}(x)\otimes I)\eta_{\omega} ( y),
\\B(12)=&\Big((R_{\prec}(b)\otimes I)\Delta_{\succ}(a)-(I\otimes R_{\circ}(a))\tau\Delta_{\prec}(b)
\Big)\bullet \hat{\tau}(R_{\diamond}(x)\otimes I)\eta_{\omega} ( y).
\end{align*}
\end{small}
Using Eqs.~\eqref{B5}-\eqref{B6}, we get B(7)=0. By Eq.~\eqref{B6}, we obtain B(11)=B(12)=0.
And B(8)=0 follows from Eqs.~\eqref{B3} and ~\eqref{B5}
B(9)=0 follows from Eqs.~\eqref{B4} -\eqref{B6}. Thus, Eq.~\eqref{Pb2} holds.
The proof is completed.
 \end{proof}
 
\subsection{Infinite dimensionsal pre-Lie bialgebras from the Leibniz-dendriform Yang-Baxter equation}
 
 Since a solution of the $S$-equation with an invariant symmetric part gives rise to a pre-Lie bialgebra \cite{WBLS},
  it is natural to ask whether such solutions can be derived from the Leibniz-dendriform Yang-Baxter equation.
  We will answer this question in what follows.
  
  We begin by recalling the Leibniz-dendriform Yang-Baxter equation and quasi-triangular Leibniz-dendriform bialgebras.

\begin{defi}\cite{SG} Let $(A,\succ,\prec)$ be a Leibniz-dendriform algebra and
$r\in A\otimes A$. The following equation
\begin{equation} \label{YE1}D(r)=r_{23}\circ r_{13}-r_{12}\odot r_{23}-r_{12}\succ r_{13}=0\end{equation}
is called the {\bf Leibniz-dendriform Yang-Baxter equation} in $(A,\succ,\prec)$ or {\bf LD-YBE} in short.
\end{defi}

\begin{ex} \cite{SG} \label{Zq3}
Let $A$ be a 4-dimensional Leibniz-dendriform algebra with a basis $\{\varepsilon_1, \varepsilon_2,\varepsilon_3, \varepsilon_4\}$, 
where the two binary
operations $\succ,\prec:A \otimes A \longrightarrow A$ are as follows (only non-zero operations are listed):
\begin{align*}&\varepsilon_2\succ \varepsilon_1=\varepsilon_2\succ \varepsilon_2=\varepsilon_1, \ \ \ \ \ \ \ \ \
\varepsilon_3\succ \varepsilon_2=\varepsilon_3\succ \varepsilon_1=-\varepsilon_4, \\&
\varepsilon_2\succ \varepsilon_3=-\varepsilon_3-\varepsilon_4, \ \
\varepsilon_3\prec \varepsilon_1=\varepsilon_4,\ \
 \varepsilon_3\prec \varepsilon_2=\varepsilon_3+ 2\varepsilon_4.
\end{align*}
By direct calculation, $r=\varepsilon_1\otimes \varepsilon_4-\varepsilon_4\otimes \varepsilon_1$ is 
a skew-symmetric solution of the LD-YBE in $(A,\succ,\prec)$.
\end{ex}

\begin{defi} \cite{SG}\label{In1}
 Let $(A,\succ,\prec)$ be a Leibniz-dendriform algebra and $r\in A\otimes A$. Then $r$ is called {\bf invariant} in $A\otimes A$
 if
 \begin{align}&\label{Iv1}
(L_{\odot}(x)\otimes I-I\otimes R_{\circ}(x))r=0,
\\&\label{Iv2}(L_{\star}(x)\otimes I-I\otimes R_{\prec}(x))\tau(r)=0,
\end{align}
 for all $x\in A$, where
$R_{\circ}=R_{\succ}+R_{\prec},~
 L_{\star}=L_{\circ}+R_{\circ},~L_{\odot}=L_{\succ}+R_{\prec}$.
\end{defi}

\begin{thm} \cite{SG}\label{Qy0} Let $(A,\succ,\prec)$ be a Leibniz-dendriform algebra and $r=\sum_{i}a_i\otimes b_i\in A\otimes A$.
 Assume that
$\Delta_{\succ,r},\Delta_{\prec,r}$ are given by 
 the following equations:
\begin{align}&\label{CB1}
\Delta_{\succ}(x)=\Delta_{\succ,r}(x)=(L_{\odot}(x)\otimes I-I\otimes R_{\circ}(x))r,
\\&\label{CB2}\Delta_{\prec}(x)=\Delta_{\prec,r}(x)=(L_{\star}(x)\otimes I-I\otimes R_{\prec}(x))\tau(r),
\end{align}
 for all $x\in A$, where
$\circ=\succ+\prec,~
 L_{\star}=L_{\circ}+R_{\circ},~L_{\odot}=L_{\succ}+R_{\prec}$. 
If $r$ is a solution of the
LD-YBE in $(A,\succ,\prec)$ and $r+\tau(r)$ is invariant.
Then $(A,\succ,\prec,\Delta_{\succ,r},\Delta_{\prec,r})$ is a Leibniz-dendriform bialgebra.
\end{thm}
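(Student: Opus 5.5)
This theorem is cited from \cite{SG}, so the plan follows the standard coboundary scheme for bialgebras, adapted to the Leibniz-dendriform setting. Two things must be verified for the maps $\Delta_{\succ,r},\Delta_{\prec,r}$ of Eqs.~\eqref{CB1}-\eqref{CB2}:
\begin{enumerate}
\item[(i)] they satisfy the compatibility conditions \eqref{B1}-\eqref{B6};
\item[(ii)] $(A,\Delta_{\succ,r},\Delta_{\prec,r})$ is a Leibniz-dendriform coalgebra, i.e.\ Eqs.~\eqref{Ca1}-\eqref{Ca3} hold.
\end{enumerate}
I would handle (i) first, since it should not need the LD-YBE at all.

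\emph{Step (i): compatibility.} Substitute \eqref{CB1}-\eqref{CB2} into each of \eqref{B1}-\eqref{B6}. Every term becomes an operator of the form $L_\ast(\cdot)\otimes I$ or $I\otimes R_\ast(\cdot)$ applied to $r$ or $\tau(r)$. Rewrite composites such as $L_{\odot}(x\circ y)$ and $L_{\odot}(x)L_{\odot}(y)$ using the axioms \eqref{Ld1}-\eqref{Ld3}; these are exactly the statements that $L_{\succ}$, $R_{\prec}$ and related maps form (bi)representations of the sub-adjacent Leibniz algebra $(A,\circ)$. After this rewriting the terms should cancel identically, except for a remainder. That remainder should be expressible through $(L_{\odot}(x)\otimes I-I\otimes R_{\circ}(x))(r+\tau(r))$ and $(L_{\star}(x)\otimes I-I\otimes R_{\prec}(x))(r+\tau(r))$, and it vanishes by the invariance of $r+\tau(r)$ in Definition~\ref{In1}. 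The practical device is to split $r=\tfrac12(r-\tau(r))+\tfrac12(r+\tau(r))$: the skew part then cancels by the algebra identities alone, and the symmetric part is killed by invariance.

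\emph{Step (ii): coassociativity-type identities.} For each of \eqref{Ca1}-\eqref{Ca3}, expand both sides as sums of operators in $x$ acting on three-tensor expressions quadratic in $r$, such as $r_{12}\succ r_{13}$ and $r_{23}\circ r_{13}$. The goal is to write the difference in the form
\[
\big(L_{\odot}(x)\otimes I\otimes I-I\otimes I\otimes R_{\circ}(x)+\cdots\big)D(r)
\]
plus terms built from the invariance expressions, together with their permutations by $\tau_{12}$, $\tau_{23}$ and similar flips. The first part vanishes because $D(r)=0$ is the LD-YBE \eqref{YE1}; the remaining terms vanish by invariance of $r+\tau(r)$.

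The main obstacle is step (ii). Each identity involves on the order of a dozen quadratic terms in $r$, and one has to find the exact operator multiplying $D(r)$ and the exact invariance correction terms. Terms where $\tau(r)$ appears, coming from $\Delta_{\prec,r}$, will not match $D(r)$ directly. To handle them I would replace $\tau(r)$ by $-r+(r+\tau(r))$ and use invariance to move the operators across $r+\tau(r)$, so that everything is expressed in terms of $r$ alone. I would treat \eqref{Ca1} first, since it involves only $\Delta_{\succ,r}$, fixing the pattern there before handling the mixed identities \eqref{Ca2}-\eqref{Ca3}.
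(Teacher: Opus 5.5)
The paper gives no proof of this statement (it is quoted from \cite{SG}). Your outline is the standard coboundary verification for such results, and it is sound. In step (i), \eqref{CB1}--\eqref{CB2} evaluated on $r+\tau(r)$ are exactly the left-hand sides of \eqref{Iv1}--\eqref{Iv2}, so the symmetric part of $r$ contributes nothing to $\Delta_{\succ,r},\Delta_{\prec,r}$. For skew-symmetric $r$ the conditions \eqref{B1}--\eqref{B6} do reduce to consequences of \eqref{Ld1}--\eqref{Ld3} and the left Leibniz identity, such as $(x\circ y+y\circ x)\succ z=(x\circ y+y\circ x)\prec z=(x\succ y+y\prec x)\prec z=0$.

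In step (ii), your substitution $\tau(r)=-r+(r+\tau(r))$ is what disposes of the flipped terms. For example, with $D_1(r)=r_{12}\odot r_{13}+r_{13}\circ r_{23}+r_{12}\succ r_{23}$ and $\sigma_{12}$ the flip of the first two factors, one gets $D_1(r)-\sigma_{12}D(r)=\sum_j\big((R_{\odot}(a_j)\otimes I+I\otimes R_{\succ}(a_j))(r+\tau(r))\big)\otimes b_j$. This vanishes by subtracting \eqref{Iv2} from \eqref{Iv1}, so $D(r)=0$ forces $D_1(r)=0$, which is the fact invoked via \cite{SG} in the proof of Theorem~\ref{Tb4}. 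One small point: group terms directly into $r+\tau(r)$ rather than splitting with $\tfrac12$, since $\mathbb K$ is an arbitrary field and may have characteristic $2$.
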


\begin{defi} \cite{SG} \label{Qt1}
 Let $(A,\succ,\prec)$ be a Leibniz-dendriform algebra and $r\in A\otimes A$. If $r$ is a solution of the LD-YBE in $(A,\succ,\prec)$
 and $r+\tau(r)$ is invariant, then the Leibniz-dendriform
  bialgebra $(A, \succ,\prec,\Delta_{\succ,r},\Delta_{\prec,r})$ induced by $r$ is called a {\bf quasi-triangular} Leibniz-dendriform
 bialgebra. In particular, if $r$ is skew-symmetric, $(A, \succ,\prec,\Delta_{\succ,r},\Delta_{\prec,r})$
is called a {\bf triangular} Leibniz-dendriform bialgebra, where $\Delta_{\succ,r}$ and $\Delta_{\prec,r}$
are given by Eqs.~~(\ref{CB1})-(\ref{CB2}).
\end{defi}

For further details on Leibniz-dendriform bialgebras, we refer the reader to \cite{SG}.

 \begin{defi} \cite{LH,B}
Let ($L=\oplus_{i\in \mathbb{Z}}L_i ,\cdot)$ be a $\mathbb{Z}$-graded pre-Lie algebra.
If $\hat{r}\in L\hat{\otimes} L$ satisfies the {\bf $S$-equation } 
\begin{equation*} S(\hat{r})=\hat{r}_{12}\cdot\hat{r}_{13}-\hat{r}_{12}\cdot \hat{r}_{23}-[\hat{r}_{13}, \hat{r}_{23}]=0.
\end{equation*}
Then $r$ is called a {\bf completed solution of the $S$-equation} in $L$. 
 In the special case where $L = L_0$, it reduces to the solution of the $S$-equation.
\end{defi} 

 \begin{defi} 
 Let ($L=\oplus_{i\in \mathbb{Z}}L_i ,\cdot)$ be a $\mathbb{Z}$-graded pre-Lie algebra
  and $\hat{r}\in L\hat{\otimes} L$. Then $\hat{r}$ is called {\bf invariant} 
 in $L\hat{\otimes }L$
 if
 \begin{align*}
\big(L_{\cdot}(x)\hat{\otimes} I+I\hat{\otimes} (L_{\cdot}(x)-R_{\cdot}(x))\big)\hat{r}=0,~~\forall~x\in L.
\end{align*}
\end{defi}
 
 \begin{thm} \label{Py}
Let ($L=\oplus_{i\in \mathbb{Z}}L_i ,\cdot)$ be a $\mathbb{Z}$-graded pre-Lie algebra. Assume that
 $\hat{r}\in L\hat{\otimes} L$ is a completed solution 
of the $S$-equation whose symmetric part is invariant.
Define a linear map $\delta_{\hat{r}}:L\rightarrow L\hat{\otimes} L$ by
\begin{equation} \label{CB3}
\delta_{\hat{r}}(x)=-\big(L_{\cdot}(x)\hat{\otimes} I+I\hat{\otimes} (L_{\cdot}(x)-R_{\cdot}(x))\big)\hat{r},\quad x\in L.
\end{equation}
 Then $(L, \cdot,\delta_{\hat{r}})$ is a completed pre-Lie bialgebra.
\end{thm}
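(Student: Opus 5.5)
We have to prove Theorem \ref{Py}: given a completed solution $\hat r$ of the $S$-equation whose symmetric part is invariant, the map $\delta_{\hat r}$ defined in Eq.~\eqref{CB3} makes $(L,\cdot,\delta_{\hat r})$ a completed pre-Lie bialgebra. This is the completed analogue of the finite-dimensional result of \cite{WBLS,B}. The plan is to run the finite-dimensional argument verbatim, working with the completed operators $\varphi\hat{\otimes}\psi$ and $\hat\tau$. Write $\hat r=\sum_{i,j,\alpha}a_{i,\alpha}\otimes b_{j,\alpha}$. All left and right multiplication operators are graded, so they act componentwise on $L\hat{\otimes}L$ and $L\hat{\otimes}L\hat{\otimes}L$. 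Hence every identity below is an identity among infinite formal sums, and it can be checked on each homogeneous component, where only finitely many terms appear.

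\textbf{Step 1: the compatibility conditions.} We check Eqs.~\eqref{Pb1} and \eqref{Pb2} for $\delta=\delta_{\hat r}$.
\begin{itemize}
\item For \eqref{Pb2}, $\delta_{\hat r}$ has the coboundary form $\delta_{\hat r}(x)=-\rho(x)\hat r$, where $\rho(x)=L_\cdot(x)\otimes I+I\otimes\operatorname{ad}(x)$. Note that $x\mapsto L_\cdot(x)$ and $x\mapsto \operatorname{ad}(x)=L_\cdot(x)-R_\cdot(x)$ are representations of the sub-adjacent Lie algebra. Expanding $\delta_{\hat r}([a,b])$ and subtracting the remaining terms of \eqref{Pb2}, everything cancels by the pre-Lie identity. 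In the finite-dimensional case this holds for every $r$, and the same expansion works here.
\item For \eqref{Pb1}, the expansion leaves a remainder involving only $\hat r+\hat\tau\hat r$. Each such term has the shape $(\ast\otimes\ast)(\hat r+\hat\tau\hat r)$ with the operator of Definition~(invariant) applied. Invariance of the symmetric part therefore kills it.
\end{itemize}
I will organize both computations exactly as in \cite{B}, replacing $\otimes$ by $\hat\otimes$ throughout.

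\textbf{Step 2: the coalgebra axiom.} We must show $(L,\delta_{\hat r})$ satisfies Eq.~\eqref{Pl1}. The standard route is to compute
\[
(I\hat{\otimes}\delta)\delta(x)-(\hat\tau\hat{\otimes}I)(I\hat{\otimes}\delta)\delta(x)-(\delta\hat{\otimes}I)\delta(x)+(\hat\tau\hat{\otimes}I)(\delta\hat{\otimes}I)\delta(x)
\]
and express it as a sum of two kinds of terms:
\begin{itemize}
\item $\Phi(x)S(\hat r)$ and $(\hat\tau\hat{\otimes}I)\Phi(x)S(\hat r)$, where $\Phi(x)$ is the operator $L_\cdot(x)\otimes I\otimes I+I\otimes L_\cdot(x)\otimes I+I\otimes I\otimes\operatorname{ad}(x)$ acting on three tensor factors;
\item terms in which $\hat r+\hat\tau\hat r$ appears acted on by its invariance operator.
\end{itemize}
The $S$-equation kills the first kind and invariance kills the second, so \eqref{Pl1} follows.

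\textbf{Main obstacle.} The main difficulty is the bookkeeping in Step 2: identifying precisely which combinations of the twelve or so terms regroup into $S(\hat r)$ and which into invariance expressions. My plan is to copy the finite-dimensional identity from \cite{WBLS} and verify that each manipulation uses only the pre-Lie identity, flips, and graded operators. I will also check that products like $\hat r_{12}\cdot\hat r_{13}$ are well defined componentwise. This holds because, for fixed target degrees, the gradings force the relevant sums to be finite, using that $\hat r$ lies in a product of finite-dimensional pieces $L_i\otimes L_j$. Once well-definedness is settled, the identities transfer verbatim, and Steps 1 and 2 together give the theorem.
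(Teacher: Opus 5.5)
Your plan matches the paper's, whose proof is only a pointer to Proposition~2.10 of \cite{WBLS}. Your treatment of Eq.~\eqref{Pb2}, as the automatic $1$-cocycle identity of a coboundary, is correct and holds for every $\hat r$.

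\textbf{The gap is Step~1 for Eq.~\eqref{Pb1}.} You take ``symmetric part is invariant'' at face value and assert that the remainder involves only $\hat r+\hat\tau\hat r$. It does not. Put $P(x)=L_\cdot(x)\hat\otimes I+I\hat\otimes L_\cdot(x)$, so that $\delta_{\hat r}(x)=-P(x)\hat r+(I\hat\otimes R_\cdot(x))\hat r$. After expanding, the terms acting only in one slot and the mixed terms cancel by the pre-Lie identity $R_\cdot(x\cdot y)=L_\cdot(x)R_\cdot(y)-R_\cdot(y)L_\cdot(x)+R_\cdot(y)R_\cdot(x)$. The left-hand side of \eqref{Pb1} then reduces to
\[
-\big(P(a\cdot b)-P(a)P(b)\big)(\hat r-\hat\tau\hat r),
\]
which depends only on the skew-symmetric part. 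By the paper's definition, a tensor $t$ is invariant exactly when $\delta_t=0$. So your hypothesis only says that $\delta_{\hat r}$ is the coboundary of the skew-symmetric part, and for such coboundaries \eqref{Pb1} fails in general.

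Here is a concrete counterexample. Let $L=L_0=\operatorname{span}\{e,u,v\}$ with $e\cdot u=u$, $e\cdot v=v$ and all other products zero. This is pre-Lie, since $L_\cdot([x,y])$ and $[L_\cdot(x),L_\cdot(y)]$ both vanish. Take $r=u\otimes v-v\otimes u$. Then:
\begin{itemize}
\item $S(r)=0$ trivially, and the symmetric part of $r$ is $0$, hence invariant;
\item $\delta_r(e)=-2r$ and $\delta_r(u)=\delta_r(v)=0$;
\item the left-hand side of \eqref{Pb1} at $a=b=e$ equals $8r\neq0$ (in characteristic $\neq2$).
\end{itemize}
So the statement as literally written is false. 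No bookkeeping will make your Step~1, or the matching part of Step~2, go through.

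\textbf{The fix.} The intended hypothesis is that $\hat r-\hat\tau\hat r$ is invariant. This is what the paper's definition of a completed quasi-triangular pre-Lie bialgebra requires, and what Theorem~\ref{Tb4} actually produces. With it, $\delta_{\hat r}=\delta_{\hat s}$ for the symmetric part $\hat s$, and the displayed remainder vanishes identically. Equivalently, invariance gives $P(z)\hat a=(I\hat\otimes R_\cdot(z))\hat a$ for $\hat a=\hat r-\hat\tau\hat r$. Substituting this twice and using the same pre-Lie identity yields $P(a)P(b)\hat a=P(a\cdot b)\hat a$. In Step~2 the coalgebra defect is then that of a symmetric tensor, Bai's setting. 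Relating it to $S(\hat r)=0$ is where the invariance of $\hat r-\hat\tau\hat r$, not of $\hat r+\hat\tau\hat r$, must be used.

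\textbf{A separate issue: finiteness.} Your finiteness argument is not valid as stated. For a general $\hat r\in L\hat\otimes L$, the $L_p\otimes L_q\otimes L_s$-component of $\hat r_{12}\cdot\hat r_{13}$ collects contributions from the $L_i\otimes L_q$- and $L_{p-i}\otimes L_s$-components of $\hat r$ for every $i\in\mathbb Z$. Finite-dimensionality of the pieces does not make this sum finite, and the same problem affects $(\delta_{\hat r}\hat\otimes I)\delta_{\hat r}$. What saves the paper's applications is that $\hat r$ has fixed total degree. The element in Eq.~\eqref{Cr1}, built with a graded $\omega$, lies in $\prod_k L_k\otimes L_{-k-m}$. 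You should assume something of this kind explicitly.
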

\begin{proof}
The proof follows by the same argument as Proposition 2.10 in \cite{WBLS}.
\end{proof}
 
 \begin{defi} \label{Qt1}
 Let $(L, \cdot)$ be a $\mathbb{Z}$-graded pre-Lie algebra and $\hat{r}\in L\hat{\otimes} L$. If $\hat{r}$ is a completed solution 
 of the $S$-equation in $(L, \cdot)$
 and $\hat{r}-\hat{\tau}(\hat{r})$ is invariant in $L\hat{\otimes} L$, then the completed pre-Lie
  bialgebra $(L, \cdot,\delta_{\hat{r}})$ induced by $\hat{r}$ is called a {\bf completed quasi-triangular} pre-Lie
 bialgebra, where $\delta_{\hat{r}}$ is given by Eq.~\eqref{CB3}.
 In particular, if $\hat{r}$ is symmetric, the same bialgebra is called a completed triangular pre-Lie
  is called a {\bf completed triangular} pre-Lie bialgebra.
\end{defi}

 We establish a connection between solutions of the LD-YBE in a 
 Leibniz-dendriform algebra and completed solutions of the $S$-equation in the induced pre-Lie algebra.

 \begin{thm} \label{Tb4}
Let $(A, \succ,\prec)$ be a Leibniz-dendriform algebra and $(B=\oplus_{i\in\mathbb{Z}}B_{i}, \diamond,
\omega)$ be a quadratic $\mathbb{Z}$-graded Zinbiel algebra. Assume that $(A\otimes B, \cdot)$ is the
induced $\mathbb{Z}$-graded pre-Lie algebra from $(A, \succ,\prec)$ 
by $(B=\oplus_{i\in\mathbb{Z}}B_{i}, \diamond,\omega)$.
 Suppose that $r=\sum_{i}u_{i}
\otimes v_{i}\in A\otimes A$ is a solution of the LD-YBE in $(A, \succ,\prec)$ with $r+\tau(r)$ invariant in $A\otimes A$. Then
\begin{align}\label{Cr1} 
\hat{r}=\sum_{i}\sum_{j\in\Omega}(u_{i}\otimes e_{j})\otimes(v_{i}\otimes f_{j})
\in(A\otimes B)\,\hat{\otimes}\,(A\otimes B)  
\end{align}
is a completed solution of the S-equation in $(A\otimes B, \cdot)$ with $\hat{r}-\hat{\tau} \hat{r}$
 invariant in $(A\otimes B)\hat{\otimes}(A\otimes B)$. In particular, if 
 $r=\sum_{i}u_{i}
\otimes v_{i}\in A\otimes A$ is a skew-symmetric solution of the LD-YBE in $(A, \succ,\prec)$, then
$\hat{r}=\sum_{i}\sum_{j\in\Omega}(u_{i}\otimes e_{j})\otimes(v_{i}\otimes f_{j})
\in(A\otimes B)\,\hat{\otimes}\,(A\otimes B) $
is a symmetric completed solution of the $S$-equation in $(A\otimes B, \cdot)$,
where $\{e_{j}\}_{j\in\Omega}$ is a homogeneous basis of $B=\oplus_{i\in\mathbb{Z}}B_{i}$
and $\{f_{j}\}_{j\in\Omega}$ is its homogeneous dual basis with respect to $\omega$. 
\end{thm}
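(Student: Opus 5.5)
We will split the claim into two independent statements. First, $\hat r$ satisfies the $S$-equation in $(A\otimes B,\cdot)$. Second, $\hat r-\hat\tau\hat r$ is invariant in $(A\otimes B)\hat\otimes(A\otimes B)$. The skew-symmetric case is then immediate: if $\tau r=-r$, then $\hat\tau\hat r=\sum_{i,j}(v_i\otimes f_j)\otimes(u_i\otimes e_j)$. We rewrite this using the identity
\begin{equation*}
\sum_j f_j\otimes e_j=-\sum_j e_j\otimes f_j,
\end{equation*}
which comes from skew-symmetry of $\omega$ (the dual basis of $\{f_j\}$ is $\{-e_j\}$). This gives $\hat\tau\hat r=\hat r$.

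The basic tool is to turn the Zinbiel structure constants on the $B$-legs into identities for the canonical element $\sum_j e_j\otimes f_j$. For homogeneous $x\in B$, expanding in the dual basis and using invariance of $\omega$ in the two forms
\begin{equation*}
\omega(x\diamond y,z)=\omega(y,x\diamond z+z\diamond x),\qquad \omega(x\diamond y,z)=-\omega(x,z\diamond y),
\end{equation*}
yields the following relations (these are dual to Eqs.~\eqref{Bg1}--\eqref{Bg5}):
\begin{equation*}
\sum_j (x\diamond e_j)\otimes f_j=\sum_j e_j\otimes (f_j\diamond x)-\sum_j e_j\otimes(x\diamond f_j),
\end{equation*}
together with analogous relations moving $R_\diamond(x)$ across the two legs. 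For triple products we will use the completed elements
\begin{equation*}
\sum_{j,k} (e_j\diamond e_k)\otimes f_j\otimes f_k,\qquad \sum_{j,k} e_j\otimes e_k\otimes (f_j\diamond f_k),\qquad \sum_{j,k} e_j\otimes (f_j\diamond e_k)\otimes f_k .
\end{equation*}
The aim is to express all of these through the single element $\hat\eta=\sum_{j,k} e_j\otimes e_k\otimes \eta$-type data, with one product placement. Pairing with $y_1\otimes y_2\otimes y_3$ via $\hat\omega$ reduces each to $\pm\omega(y_a,y_b\diamond y_c)$. By Lemma on $\eta_\omega$ and left nondegeneracy of $\hat\omega$, we can then rewrite each of them as a combination of two basic tensors, say
\begin{equation*}
T_1=\sum_{j,k}(e_j\diamond e_k)\otimes f_j\otimes f_k
\end{equation*}
and its image under a permutation of legs.

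For the $S$-equation, we expand the three terms $\hat r_{12}\cdot\hat r_{13}$, $\hat r_{12}\cdot\hat r_{23}$ and $[\hat r_{13},\hat r_{23}]$ using Eq.~\eqref{Pd1}. Each term produces an $A$-part of the form $u_i\succ u_k\otimes v_i\otimes v_k$, $u_k\prec u_i\otimes\cdots$, and so on, tensored with a $B$-part of the form above. After normalizing the $B$-parts via the identities, we group the coefficients of each independent $B$-tensor. The claim to verify is that each coefficient is a linear combination of $D(r)$ (Eq.~\eqref{YE1}), possibly with legs permuted, and of terms of the form $(L_{\odot}(x)\otimes I-I\otimes R_\circ(x))(r+\tau r)$ or $(L_\star(x)\otimes I-I\otimes R_\prec(x))(r+\tau r)$ inserted in one leg. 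The latter vanish by the invariance of $r+\tau r$ (Definition~\ref{In1}). This matches the mechanism of Proposition~\ref{Tb1}, where the coefficients of $x\diamond(y\diamond z)$, $(y\diamond x)\diamond z$, etc.\ combined via Eqs.~\eqref{Ld1}--\eqref{Ld3}.

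For invariance, we compute
\begin{equation*}
\bigl(L_\cdot(a\otimes x)\hat\otimes I+I\hat\otimes(L_\cdot-R_\cdot)(a\otimes x)\bigr)(\hat r-\hat\tau\hat r).
\end{equation*}
Here $\hat r-\hat\tau\hat r=\sum(u_i\otimes e_j)\otimes(v_i\otimes f_j)+(v_i\otimes e_j)\otimes(u_i\otimes f_j)$ corresponds to $s=r+\tau r$ paired with the canonical element. We move $x$ across the legs using the two-leg identities above, then collect the coefficients of $\sum e_j\otimes (x\diamond f_j)$ and $\sum e_j\otimes(f_j\diamond x)$. These should be exactly the expressions in Eqs.~\eqref{Iv1}--\eqref{Iv2} applied to $s$ and $\tau s=s$, so they vanish.

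The main obstacle is the $S$-equation bookkeeping. There will be about twelve $A\otimes A\otimes A$ terms over several $B$-tensors, and matching them to $D(r)$, $\tau_{12}$-versions of $D(r)$, and the invariance terms requires choosing the right normal form for the $B$-tensors. I would first fix the normal form by reducing everything to $\hat\omega$-pairings $\omega(y_1,y_2\diamond y_3)$ and $\omega(y_1,y_3\diamond y_2)$. Only then would I read off the $A$-coefficients. If the coefficients fail to close, the fallback is to use the invariance of $r+\tau r$ to trade $r_{13}$-type terms for $\tau r$-type terms.
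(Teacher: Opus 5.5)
Your route is the paper's. You write $\hat r-\hat\tau\hat r=\sum_j s\bullet(e_j\otimes f_j)$ with $s=r+\tau(r)$, normalize the $B$-legs with dual-basis identities coming from the invariance of $\omega$, and read off $A$-coefficients that vanish by the LD-YBE and by invariance of $s$. Your skew-symmetric case is also handled as in the paper.

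\textbf{Sign error in the two-leg identity.} Since $\omega(x\diamond u,v)=\omega(u,x\diamond v+v\diamond x)$, expanding in the dual basis gives
\[
\sum_j (x\diamond e_j)\otimes f_j=\sum_j e_j\otimes (x\diamond f_j)+\sum_j e_j\otimes (f_j\diamond x).
\]
This is \eqref{Bg12} combined with \eqref{Bg9}. With your minus sign the coefficient bookkeeping would not close. With the correct relation and $\sum_j (e_j\diamond x)\otimes f_j=-\sum_j e_j\otimes (f_j\diamond x)$, the invariance computation gives
\[
\bigl(L_{\odot}(a)\otimes I-I\otimes R_{\circ}(a)\bigr)s\bullet\sum_j (x\diamond e_j)\otimes f_j+\bigl(I\otimes L_{\star}(a)-R_{\prec}(a)\otimes I\bigr)s\bullet\sum_j e_j\otimes (x\diamond f_j).
\]
The first coefficient is \eqref{Iv1} for $s$. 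The second is the flip of \eqref{Iv2} for $s$, using $\tau(s)=s$.

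\textbf{Closing the $S$-equation.} No fallback is needed, but you should state the identity that closes it. Reduce all $B$-tensors by \eqref{Bg6}--\eqref{Bg8} to
\[
E_1=\sum_{p,q}(e_p\diamond e_q)\otimes f_p\otimes f_q,\qquad E_2=\sum_{p,q}(e_q\diamond e_p)\otimes f_p\otimes f_q,
\]
which are your $T_1$ and its image under swapping legs $2,3$. Up to an overall sign, you get $S(\hat r)=D(r)\bullet E_1+(D(r)+D_1(r))\bullet E_2$, where $D_1(r)=r_{12}\odot r_{13}+r_{13}\circ r_{23}+r_{12}\succ r_{23}$.

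The paper quotes \cite{SG} for $D(r)+D_1(r)=0$. You can see it directly: with $\sigma_{12}$ the flip of the first two factors,
\[
D(r)+D_1(r)=D(r)+\sigma_{12}D(r)+\sum_i\Bigl(\bigl(R_{\odot}(u_i)\otimes I+I\otimes R_{\succ}(u_i)\bigr)s\Bigr)\otimes v_i .
\]
Because $L_{\star}-L_{\odot}=R_{\odot}$ and $R_{\circ}-R_{\prec}=R_{\succ}$, the last term is the difference of the left-hand sides of \eqref{Iv2} and \eqref{Iv1} for $s$. This is exactly the ``permuted $D(r)$ plus invariance terms'' structure you anticipated, so the computation does close.
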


\begin{proof}
Since $\omega$ is invariant, for given $e_{s}, e_{t}, e_{p}\in B,~(s, t, p\in\Omega)$, we have 
\begin{small}
\begin{align}&\label{Bg6}\sum_{p,q}e_{p}\otimes e_{q}\diamond f_{p} \otimes f_{q}=
-\sum_{p,q}e_{p}\otimes e_{q}\otimes f_{q}\diamond f_{p}=\sum_{p,q}\big(e_{p}\diamond e_{q}\otimes f_{p}\otimes f_{q}
		 + e_{q}\diamond e_{p}\otimes f_{p}\otimes f_{q}\big),\\
		&\label{Bg7}\sum_{p,q} e_{p} \diamond e_{q} \otimes f_{p} \otimes f_{q} = -\sum_{p,q} e_{p} \otimes f_{p}\diamond e_{q} \otimes f_{q}.
\\&\label{Bg8}\sum_{p,q}e_{q}\diamond e_{p}\otimes f_{p}\otimes f_{q}
		=\sum_{p,q}e_{p}\otimes e_{q}\otimes f_{p}\diamond f_{q}, 
\\&\label{Bg9}\sum_{j} e_{p}\diamond f_{j} \otimes e_{j}=-\sum_{j} e_{p}\diamond e_{j} \otimes f_{j}, 
\\&\label{Bg10}\sum_{j}e_{j}\otimes f_{j}\diamond e_{p} =-\sum_{j} e_{j}\diamond e_{p} \otimes f_{j}.
\\&\label{Bg11}\sum_{j}e_{j}\otimes f_{j}\diamond e_{p} = \sum_{j} f_{j} \diamond e_{p} \otimes e_{j}=-\sum_{j} f_{j} \otimes e_{j}\diamond e_{p} , 
\\&\label{Bg12}\sum_{j}f_{j}\otimes e_{p}\diamond e_{j}=-\sum_{j}e_{j}\otimes e_{p}\diamond f_{j} =\sum_{j}e_{j}\otimes
		 f_{j}\diamond e_{p} +\sum_{j} e_{p}\diamond f_{j} \otimes e_{j}.
	\end{align}
\end{small}
Further details on the proof of the above equations are available in \cite{HL}.
By Eq.~\eqref{Pd1}, we obtain
\begin{small} \begin{align}
&\label{Yq1}(L_{\cdot}(a\otimes e_k)\hat{\otimes} I+I\hat{\otimes}(L_{\cdot}-R_{\cdot})(a\otimes e_k) )(\hat{r}-\hat{\tau}\hat{r})
\\=&\sum_{i}\sum_{j} (a\otimes e_k)\cdot(u_i\otimes e_j)\otimes v_i\otimes f_j-(a\otimes e_k)\cdot(v_i\otimes f_j)\otimes u_i\otimes e_j
\nonumber\\&+u_i\otimes e_j\otimes[a\otimes e_k,v_i\otimes f_j]-v_i\otimes f_j\otimes[a\otimes e_k,u_i\otimes e_j]
\nonumber\\=&(L_{\succ}(a)\otimes I)r\bullet(e_k\diamond e_j\otimes f_j)-(R_{\prec}(a)\otimes I)r\bullet(e_j\diamond e_k\otimes f_j)
\nonumber\\&-(L_{\succ}(a)\otimes I)\tau(r)\bullet(e_k\diamond f_j\otimes e_j)+(R_{\prec}(a)\otimes I)\tau(r)\bullet(f_j\diamond e_k\otimes e_j)
\nonumber\\&+(I\otimes L_{\succ}(a))r\bullet(e_j\otimes e_k\diamond f_j)-(I\otimes R_{\prec}(a))r\bullet(e_j\otimes f_j\diamond e_k)
\nonumber\\&-(I\otimes R_{\succ}(a))r\bullet(e_j\otimes f_j\diamond e_k)+(I\otimes L_{\prec}(a))r\bullet(e_j\otimes e_k\diamond f_j)
\nonumber\\&-(I\otimes L_{\succ}(a))\tau(r)\bullet(f_j\otimes e_k\diamond e_j)+(I\otimes R_{\prec}(a))\tau(r)\bullet(f_j\otimes e_j\diamond e_k)
\nonumber\\&+(I\otimes R_{\succ}(a))\tau(r)\bullet(f_j\otimes e_j\diamond e_k)-(I\otimes L_{\prec}(a))\tau(r)\bullet(f_j\otimes e_k\diamond e_j)
\nonumber\\=&\sum_{j}(I\otimes L_{\star}(a)-R_{\prec}(a)\otimes I)(r+\tau(r))\bullet \big(e_j\otimes(e_k\diamond f_j)\big)
\nonumber\\&+(L_{\odot}(a)\otimes I-I\otimes R_{\circ}(a))(r+\tau(r))\bullet \big(e_k\diamond e_j\otimes f_j\big)\nonumber.
\end{align}
and
\begin{align}
&\label{Yq2}S(\hat{r})=\hat{r}_{12}\cdot\hat{r}_{13}-[\hat{r}_{13},\hat{r}_{23}]
-\hat{r}_{12}\cdot\hat{r}_{23}\\
=&\sum_{i,j}\sum_{p,q}\big(u_{i}\succ u_{j}\otimes v_{i}\otimes v_{j}\big)
\bullet\big(e_{p}\diamond e_{q}\otimes f_{p}\otimes f_{q}\big)
-\big(u_{j}\prec u_{i}\otimes v_{i}\otimes v_{j}\big)
\bullet\big(e_{q}\diamond e_{p}\otimes f_{p}\otimes f_{q}\big)\nonumber\\
&-\big(u_{i}\otimes u_{j}\otimes(v_{i}\succ v_{j}\big)\bullet
\big(e_{p}\otimes e_{q}\otimes f_{p}\diamond f_{q}\big)
+\big(u_{i}\otimes u_{j}\otimes v_{j}\prec v_{i}\big)\bullet
\big(e_{p}\otimes e_{q}\otimes f_{q}\diamond f_{p}\big)
\nonumber\\&+\big(u_{i}\otimes u_{j}\otimes v_{j}\succ v_{i}\big)
\bullet\big(e_{p}\otimes e_{q}\otimes f_{q}\diamond f_{p}\big)
-\big(u_{i}\otimes u_{j}\otimes v_{i}\prec v_{j}\big)
\bullet\big(e_{p}\otimes e_{q}\otimes f_{p}\diamond f_{q}\big)
\nonumber\\&-\big(u_{i}\otimes v_{i}\succ u_{j}\otimes v_{j}\big)
\bullet\big(e_{p}\otimes f_{p}\diamond e_{q}\otimes  f_{q}\big)
+\big(u_{i}\otimes  u_{j}\prec v_{i}\otimes v_{j}\big)
\bullet\big(e_{p}\otimes e_{q}\diamond f_{p}\otimes  f_{q}\big)
\nonumber\\=&\sum_{p,q}\big(r_{12}\succ r_{13}\big)
\bullet\big(e_{p}\diamond e_{q}\otimes f_{p}\otimes f_{q}\big)
-\big(r_{13}\prec r_{12}\big)
\bullet\big(e_{q}\diamond e_{p}\otimes f_{p}\otimes f_{q}\big)\nonumber\\
&-\big(r_{13}\succ r_{23}\big)\bullet
\big(e_{p}\otimes e_{q}\otimes f_{p}\diamond f_{q}\big)
+\big(r_{23}\prec r_{13}\big)\bullet
\big(e_{p}\otimes e_{q}\otimes f_{q}\diamond f_{p}\big)
\nonumber\\&+\big(r_{23}\succ r_{13}\big)
\bullet\big(e_{p}\otimes e_{q}\otimes f_{q}\diamond f_{p}\big)
-\big(r_{13}\prec r_{23}\big)
\bullet\big(e_{p}\otimes e_{q}\otimes f_{p}\diamond f_{q}\big)
\nonumber\\&-\big(r_{12}\succ r_{23}\big)
\bullet\big(e_{p}\otimes f_{p}\diamond e_{q}\otimes  f_{q}\big)
+\big(r_{23}\prec r_{12}\big)
\bullet\big(e_{p}\otimes e_{q}\diamond f_{p}\otimes  f_{q}\big)\nonumber.
\end{align}\end{small}
In view of $r+\tau(r)$ being invariant in $A\otimes A$ and Eqs.~\eqref{Bg9}-\eqref{Bg12}, we get that
\begin{small} \begin{align*}&
(L_{\cdot}(a\otimes e_k)\hat{\otimes} I+I\hat{\otimes}(L_{\cdot}-R_{\cdot})(a\otimes e_k) )(\hat{r}-\hat{\tau}\hat{r})
\\=&\sum_{j}(I\otimes L_{\star}(a)-R_{\prec}(a)\otimes I)(r+\tau(r))\bullet \big(e_j\otimes(e_k\diamond f_j)\big)
+(L_{\odot}(a)\otimes I-I\otimes R_{\circ}(a))(r+\tau(r))\bullet \big(e_k\diamond e_j\otimes f_j\big)\\=&0,
\end{align*}\end{small}
which means that $\hat{r}-\hat{\tau}\hat{r}$
 is invariant in $(A\otimes B)\hat{\otimes}(A\otimes B)$.
Assume that $r$ is a solution of the LD-YBE $D(r)=0$ and $r+\tau(r)$ is invariant in $A\otimes A$.
By Theorem 4.4 \cite{SG}, we obtain that $D(r)+D_{1}(r)=0$, where
$D_{1}(r)=r_{12}\odot r_{13}+r_{13}\circ r_{23}+r_{12}\succ r_{23}.$
Combining Eqs.~\eqref{Bg6}-\eqref{Bg8}, we have
\begin{align*}
&S(\hat{r})=\hat{r}_{12}\cdot\hat{r}_{13}-[\hat{r}_{13},\hat{r}_{23}]
-\hat{r}_{12}\cdot\hat{r}_{23}
\\=&\sum_{p,q}D(r)\bullet\big (e_{p}\diamond e_{q}\otimes f_{p}\otimes f_{q}\big)
+\big (D(r)+D_{1}(r)\big )\bullet \big(e_{q}\diamond e_{p}\otimes f_{p}\otimes f_{q}\big)\\=&0.
\end{align*}
Hence, $\hat{r} $ is a completed solution of the S-equation $S(\hat{r})=0 $ in $(A\otimes B,\cdot)$.

Note that for all $e_p, e_q \in B$ with $p, q \in \Omega$,
\begin{small}\begin{align*}
\hat{\omega}\Big(\sum_{j\in\Omega}e_{j}\otimes f_{j},\; e_{p}\otimes e_{q}\Big)
=-\omega(e_{q}, e_{p})=\omega(e_{p}, e_{q})
=-\hat{\omega}\Big(\sum_{j\in\Omega}f_{j}\otimes e_{j},\; e_{p}\otimes e_{q}\Big).
\end{align*}\end{small}
It follows that
$\sum_{j\in\Omega}e_{j}\otimes f_{j}
=-\sum_{j\in\Omega}f_{j}\otimes e_{j}$. Thus, if $r$ is skew-symmetric, then $\hat{r}$
is symmetric. 
 \end{proof}

\begin{thm}
Let $(A, \succ,\prec,\Delta_{\succ},\Delta_{\prec})$ be a 
Leibniz-dendriform bialgebra and $(B=\oplus_{i\in\mathbb{Z}}B_{i}, \diamond,
\omega)$ a quadratic $\mathbb{Z}$-graded Zinbiel algebra. Assume that $(A\otimes B, \cdot,\delta)$ is
the induced completed pre-Lie bialgebra from $(A, \succ,\prec,\Delta_{\succ},\Delta_{\prec})$ by
$(B, \diamond, \omega)$. If $(A, \succ,\prec,\Delta_{\succ,r},\Delta_{\prec,r})$ is quasi-triangular,
 then $(A\otimes B, \cdot,\delta_{\hat{r}})$ is also quasi-triangular.
 In particular, $(A\otimes B, \cdot,\delta_{\hat{r}})$ is also triangular if 
 $(A, \succ,$ \quad  \quad \quad $\prec,\Delta_{\succ,r},\Delta_{\prec,r})$ is triangular.
\end{thm}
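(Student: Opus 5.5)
The plan has two parts: first show that $\hat r$ produces a quasi-triangular structure, then show that this structure is the same as the one induced by Theorem~\ref{Tb3}.

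\textbf{Quasi-triangularity.} Suppose $r$ is a solution of the LD-YBE with $r+\tau(r)$ invariant. By Theorem~\ref{Tb4}, the element $\hat r$ of Eq.~\eqref{Cr1} is a completed solution of the $S$-equation, and $\hat r-\hat\tau\hat r$ is invariant. Theorem~\ref{Py} then makes $(A\otimes B,\cdot,\delta_{\hat r})$ a completed pre-Lie bialgebra, which is quasi-triangular in the sense of Definition~\ref{Qt1}. If $r$ is skew-symmetric, the last part of Theorem~\ref{Tb4} shows $\hat r$ is symmetric, so the bialgebra is triangular.

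\textbf{Identification.} It remains to show that $\delta_{\hat r}$ coincides with the coproduct $\delta$ of Eq.~\eqref{P1}, built from $\Delta_{\succ,r},\Delta_{\prec,r}$ and $\eta_\omega$. Then the induced bialgebra of Theorem~\ref{Tb3} is exactly the quasi-triangular one above. I would fix $a\in A$ and a basis element $e_k\in B$, and expand
\[
\delta_{\hat r}(a\otimes e_k)=-\big(L_\cdot(a\otimes e_k)\hat\otimes I+I\hat\otimes(L_\cdot-R_\cdot)(a\otimes e_k)\big)\hat r
\]
using Eq.~\eqref{Pd1}, in the same way as the first computation in Eq.~\eqref{Yq1}, but applied to $\hat r$ alone. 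This produces eight terms of the form $(\text{operator on }A)\,r\bullet(\text{tensor in }B)$. The $B$-tensors are $e_k\diamond e_j\otimes f_j$, $e_j\diamond e_k\otimes f_j$, $e_j\otimes e_k\diamond f_j$ and $e_j\otimes f_j\diamond e_k$.

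Using Eqs.~\eqref{Bg9}--\eqref{Bg12}, I would rewrite each of these in terms of $\eta_\omega(e_k)$ and $\hat\tau\eta_\omega(e_k)$. The key identity, obtained from the definition $\hat\omega(\eta_\omega(e_k),b_2\otimes b_3)=-\omega(e_k,b_2\diamond b_3)$ by pairing both sides against $e_p\otimes e_q$, is
\[
\eta_\omega(e_k)=\sum_j e_j\otimes f_j\diamond e_k=-\sum_j e_j\diamond e_k\otimes f_j,
\]
possibly up to a sign or a flip that needs checking. Dually, $\hat\tau\eta_\omega(e_k)$ should be expressible through $\sum_j e_k\diamond e_j\otimes f_j$ and $\sum_j e_j\otimes e_k\diamond f_j$.

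After these substitutions, the coefficient of $\eta_\omega(e_k)$ should collect into $(L_\odot(a)\otimes I-I\otimes R_\circ(a))r=\Delta_{\succ,r}(a)$ from Eq.~\eqref{CB1}. The coefficient of $\hat\tau\eta_\omega(e_k)$ should collect into $-\tau\big((L_\star(a)\otimes I-I\otimes R_\prec(a))\tau r\big)=-\tau\Delta_{\prec,r}(a)$ from Eq.~\eqref{CB2}. Together these give exactly Eq.~\eqref{P1}.

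\textbf{Main obstacle.} The hard step is this bookkeeping. The expansion yields terms with operators such as $I\otimes L_\succ(a)$ and $I\otimes L_\prec(a)$ acting on $r$, and these do not individually match the form $L_\star(a)\otimes I$ acting on $\tau r$ required by Eq.~\eqref{CB2}. I expect to need the invariance of $r+\tau(r)$, namely Eqs.~\eqref{Iv1}--\eqref{Iv2}, to trade $r$ for $-\tau r$ modulo invariant terms, as was done in the proof of Theorem~\ref{Tb4}. Getting the signs right when converting between $\sum_j e_j\otimes f_j$ and $-\sum_j f_j\otimes e_j$ is the most likely place for the match to fail. If it does fail, I would first check whether the statement intends $\delta_{\hat r}$ only up to the overall sign convention in Eq.~\eqref{CB3}.
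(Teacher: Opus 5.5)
Your quasi-triangularity paragraph is exactly the paper's proof. Theorem~\ref{Tb4} shows that $\hat r$ is a completed solution of the $S$-equation with $\hat r-\hat\tau\hat r$ invariant. Hence $(A\otimes B,\cdot,\delta_{\hat r})$ is quasi-triangular by Theorem~\ref{Py} and the definition, and the triangular case follows from the last part of Theorem~\ref{Tb4}. The conclusion of the statement concerns $\delta_{\hat r}$, so this already completes the proof. The identification $\delta=\delta_{\hat r}$ is not needed here; the paper proves it separately as Theorem~\ref{Ty1}.

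If you keep the identification step, your key identity is wrong, and not just by a sign or a flip. Use the convention $\omega(e_i,f_j)=\delta_{ij}$, which is implicit in the proof of Theorem~\ref{Tb4}, and pair with $\hat\omega$ against $b_2\otimes b_3$. The element $\sum_j e_j\otimes f_j\diamond x=-\sum_j e_j\diamond x\otimes f_j$ gives $\omega(b_2\diamond x,b_3)=\omega(x,b_2\diamond b_3+b_3\diamond b_2)$. By contrast, $\eta_\omega(x)$ gives $-\omega(x,b_2\diamond b_3)$. The correct relations, as used in the proof of Theorem~\ref{Ty1}, are
\[
\eta_\omega(x)=-\sum_j x\diamond e_j\otimes f_j,\qquad \sum_j e_j\diamond x\otimes f_j=\eta_\omega(x)+\hat\tau\eta_\omega(x),\qquad \sum_j e_j\otimes x\diamond f_j=\hat\tau\eta_\omega(x).
\]
With these, the six (not eight) terms of $\delta_{\hat r}(a\otimes x)$ collapse to
\[
\big(R_\prec(a)\otimes I-I\otimes L_\star(a)\big)r\bullet\sum_j e_j\diamond x\otimes f_j-\big(L_\succ(a)\otimes I+I\otimes L_\circ(a)\big)r\bullet\sum_j x\diamond e_j\otimes f_j .
\]
Using only $L_\star=L_\circ+R_\circ$, the coefficients of $\eta_\omega(x)$ and $\hat\tau\eta_\omega(x)$ are then $\Delta_{\succ,r}(a)$ and $-\tau\Delta_{\prec,r}(a)$. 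So the identification is a formal identity valid for every $r$. Neither the LD-YBE nor the invariance of $r+\tau(r)$ enters. The ``trade $r$ for $-\tau r$'' step you anticipated is unnecessary; needing it would only signal that you had the wrong formula for $\eta_\omega$.
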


\begin{proof} Let $(A, \succ,\prec,\Delta_{\succ,r},\Delta_{\prec,r})$ be a
quasi-triangular Leibniz-dendriform bialgebra. Assume that
 $r=\sum_{i}u_{i}\otimes v_{i}$ is a solution of the LD-YBE $D(r)=0$ in $(A, \succ,\prec)$ 
and $r+\tau(r)$ is invariant in $A\otimes A$.
By Theorem \ref{Tb4}, we know that $\hat{r}$ is a completed solution
 of the $S$-equation $S(\hat{r})=0$ in $(A\otimes B, \cdot)$ and $\hat{r}-\hat{\tau}\hat{r}$
 is invariant in $(A\otimes B)\hat{\otimes} (A\otimes B)$.
It follows that $(A\otimes B, \cdot, \delta_{\hat{r}})$ is a quasi-triangular
completed pre-Lie bialgebra, where
$\delta_{\hat{r}}$ is given by Eq.~\eqref{CB3}. The proof is finished.
\end{proof}
 
\begin{ex} By Theorem \ref{Tb4}, a symmetric completed solution of the $S$-equation in $(A\otimes V, \cdot)$ can be obtained 
  from Example \ref{Zq3} and Example \ref{Zq2}, the details are omitted here.
\end{ex}
 
 In view of Theorems \ref{Tb3} and \ref{Tb4}, from a solution of the LD-YBE with invariant skew-symmetric part, 
 we can obtain two completed pre-Lie bialgebras via two methods. 
 It is natural to consider whether these two completed pre-Lie bialgebras coincide.
 
 \begin{thm} \label{Ty1}
 Under the same assumption as in Theorem \ref{Tb4}.
	Let $\Delta_{\succ,r},\Delta_{\prec,r}:A\rightarrow A\otimes A$ be the linear maps defined by Eqs.~\eqref{CB1}-\eqref{CB2}, 
and $\delta: A\otimes B \rightarrow (A\otimes B ) \hat{\otimes} (A\otimes B )$ 
be the linear map defined by Eq.~\eqref{P1}. Then $(A\otimes B, \cdot, \delta)$ is a pre-Lie bialgebra by Theorem~\ref{Tb3}.
It coincides with the quasi-triangular pre-Lie bialgebra $(A\otimes B, \cdot, \delta_{\hat{r}})$
with $\delta_{\hat{r}}: A\otimes B \rightarrow (A\otimes B ) \hat{\otimes} (A\otimes B )$ defined by Eq.~\eqref{CB3}, where
$\hat{r}$ is given by Eq.~\eqref{Cr1}. That is, 
 we have the following commutative diagram:
 \begin{equation*}
        \xymatrix@C=3cm{
            \txt{ solutions of the LD-YBE \\ with invariant skew-symmetric parts} \ar[r]^-{Theorem~\ref{Qy0}}
             \ar[d]^-{Theorem ~\ref{Tb4}} & \txt{Leibniz-dendriform bialgebras} \ar[d]^-{Theorem~\ref{Tb3}}\\
            \txt{solutions of the S-equation \\ with invariant symmetric parts} 
            \ar[r]^-{Theorem~\ref{Py}} & \txt{pre-Lie bialgebras}}
    \end{equation*}
\end{thm}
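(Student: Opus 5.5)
We have to show that $\delta(a\otimes x)=\delta_{\hat r}(a\otimes x)$ for all $a\in A$ and homogeneous $x\in B$. Here $\delta$ is built from $\Delta_{\succ,r},\Delta_{\prec,r}$ through Eq.~\eqref{P1} with $\eta=\eta_\omega$, and $\delta_{\hat r}$ is defined by Eq.~\eqref{CB3}. Both sides are elements of $(A\otimes B)\hat\otimes(A\otimes B)$, so we use the left nondegeneracy of $\hat\omega$ on the $B$-factors. Concretely, it suffices to compare the two sides after writing each as a finite sum of terms $(\text{element of }A\otimes A)\bullet(\text{element of }B\hat\otimes B)$, reducing all $B$-parts to a small set of canonical forms.

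The first step is to expand $\delta_{\hat r}(a\otimes e_k)$, with $\hat r=\sum_{i,j}(u_i\otimes e_j)\otimes(v_i\otimes f_j)$. Using Eq.~\eqref{Pd1}, we compute $(L_\cdot(a\otimes e_k)\hat\otimes I)\hat r$ and $(I\hat\otimes(L_\cdot-R_\cdot)(a\otimes e_k))\hat r$, exactly as in the computation of Eq.~\eqref{Yq1} but applied to $\hat r$ instead of $\hat r-\hat\tau\hat r$. This gives eight terms of the form
\[
(L_\succ(a)\otimes I)r\bullet \sum_j e_k\diamond e_j\otimes f_j,\qquad (R_\prec(a)\otimes I)r\bullet\sum_j e_j\diamond e_k\otimes f_j,
\]
together with the analogous terms $(I\otimes L_\succ(a))r$, $(I\otimes L_\prec(a))r$, $(I\otimes R_\succ(a))r$ and $(I\otimes R_\prec(a))r$, whose $B$-parts are $\sum_j e_j\otimes e_k\diamond f_j$ and $\sum_j e_j\otimes f_j\diamond e_k$.

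The second step handles the other side. On $\delta(a\otimes e_k)=\Delta_{\succ,r}(a)\bullet\eta_\omega(e_k)-\tau\Delta_{\prec,r}(a)\bullet\hat\tau\eta_\omega(e_k)$ we substitute Eqs.~\eqref{CB1}--\eqref{CB2}. We also express $\eta_\omega(e_k)$ through the dual bases: from $\hat\omega(\eta_\omega(e_k),b_2\otimes b_3)=-\omega(e_k,b_2\diamond b_3)$ and the invariance of $\omega$, one gets $\eta_\omega(e_k)=\sum_j e_k\diamond e_j\otimes f_j$ up to the sign and ordering conventions. This should be checked by pairing against $e_p\otimes e_q$, as in the last paragraph of the proof of Theorem~\ref{Tb4}.

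The third step is the matching. We use Eqs.~\eqref{Bg9}--\eqref{Bg12} to rewrite every $B$-part in terms of the two independent forms $\sum_j e_k\diamond e_j\otimes f_j$ and $\sum_j e_j\otimes e_k\diamond f_j$, or their flips. We then compare the $A$-coefficients. The $\Delta_{\succ,r}$-part, $(L_\odot(a)\otimes I-I\otimes R_\circ(a))r$ with $L_\odot=L_\succ+R_\prec$, should match the terms of $\delta_{\hat r}$ carrying $r$ with operators $L_\succ(a)\otimes I$, $R_\prec(a)\otimes I$, $I\otimes R_\succ(a)$ and $I\otimes R_\prec(a)$. The $\tau\Delta_{\prec,r}$-part, $\tau(L_\star(a)\otimes I-I\otimes R_\prec(a))\tau r$, should match the remaining $I\otimes L_\circ(a)$ and $I\otimes R_\circ(a)$ terms after flipping.

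Any leftover discrepancy will be an expression of the shape
\[
(I\otimes L_\star(a)-R_\prec(a)\otimes I)(r+\tau r)\quad\text{or}\quad (L_\odot(a)\otimes I-I\otimes R_\circ(a))(r+\tau r)
\]
tensored with a $B$-part. These vanish by the invariance of $r+\tau(r)$ from Definition~\ref{In1}, the same cancellation as in Eq.~\eqref{Yq1}.

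The main obstacle is the sign and order bookkeeping in the third step: matching the $B$-tensors produced by $\hat\tau\eta_\omega(e_k)$ with those from $R_\cdot$. To control it, I would pair both sides with a generic $(c\otimes e_p)\otimes(d\otimes e_q)$ via $\rho$-free evaluation on the $B$-factor. This turns every $B$-identity into an identity among scalars $\omega(e_k,e_p\diamond e_q)$ and $\omega(e_k,e_q\diamond e_p)$, which are independent. The comparison then splits into exactly two $A\otimes A$ identities, one for each scalar.
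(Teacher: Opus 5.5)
Your proposal is correct and follows essentially the same route as the paper: expand $\delta_{\hat r}(a\otimes x)$ with Eq.~\eqref{Pd1}, rewrite every $B$-part via $\eta_\omega(x)=-\sum_j x\diamond e_j\otimes f_j$ and $\hat\tau\eta_\omega(x)=\sum_j e_j\otimes x\diamond f_j=\sum_j (x\diamond e_j+e_j\diamond x)\otimes f_j$ (consequences of Eqs.~\eqref{Bg9}--\eqref{Bg12}), and read off the coefficients $\Delta_{\succ,r}(a)$ and $-\tau\Delta_{\prec,r}(a)$. One refinement: the coefficients agree identically, and the $R_\prec(a)\otimes I$ term contributes to both $B$-forms, not only the $\eta_\omega$ one; so no leftover terms appear, and the invariance of $r+\tau(r)$ is not used in this step.
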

\begin{proof} Following \cite{HL}, we have
	\begin{align*}
		&\eta_{\omega}(x) = -\sum_{j}x \diamond e_{j} \otimes f_{j},\\
		&\sum_{j} e_{j} \otimes f_{j}\diamond x =-\sum_{j} e_{j}\diamond x \otimes f_{j}, \\
		& \hat{\tau} \eta_{\omega}(x) = \sum_{j} e_{j} \otimes x \diamond f_{j} = \sum_{j} x \diamond e_{j} \otimes f_{j} +
		\sum_{j} e_{j} \diamond x \otimes f_{j}.
	\end{align*}
Combining Eqs.~\eqref{P1}, \eqref{CB1}-\eqref{CB2}, \eqref{CB3} and \eqref{Cr1}, we obtain
\begin{align*}
		&\delta_{\hat{r}}(a \otimes x)=-\Big(L_{\cdot}(a\otimes x)\otimes I+I\otimes (L_{\cdot}-R_{\cdot})(a\otimes x)\Big)
\sum_{i,j}(u_{i}\otimes e_{j} \otimes v_{i}\otimes f_{j})
\\=&-(a\otimes x)\cdot(u_{i}\otimes e_{j})\otimes (v_{i}\otimes f_{j})-(u_{i}\otimes e_{j}) \otimes
(a\otimes x)\cdot(v_{i}\otimes f_{j})
+(u_{i}\otimes e_{j}) \otimes
(v_{i}\otimes f_{j})\cdot(a\otimes x)
\\=&\sum_{i,j}\big((u_i\prec a) \otimes v_i\big)\bullet\big((e_{j}\diamond x)\otimes f_{j}\big)
-\big((a\succ u_i)\otimes v_i\big)\bullet\big((x\diamond e_{j})\otimes f_{j}\big)
\\&-\big(u_{i}\otimes (a\succ v_i)\big)\bullet\big(e_{j}\otimes (x\diamond f_{j})\big)
+\big(u_{i}\otimes ( v_i\prec a)\big)\bullet\big(e_{j}\otimes (f_{j}\diamond x)\big)
-\big(u_{i}\otimes ( a\prec v_i)\big)\bullet\big(e_{j}\otimes (x\diamond f_{j})\big)
\\=&\sum_{j}\big(R_{\prec}(a)\otimes I-I\otimes L_{\star}(a)\big)r\bullet\big((e_{j}\diamond x)\otimes f_{j}\big)
-\big(L_{\succ}(a)\otimes I+I\otimes L_{\circ}(a)\big)r\bullet\big((x\diamond e_{j})\otimes f_{j}\big)
\\=&\big(R_{\prec}(a)\otimes I-I\otimes L_{\star}(a)\big)r\bullet\big(\eta_{\omega}(x)+\hat{\tau}\eta_{\omega}(x))
+\big(L_{\succ}(a)\otimes I+I\otimes L_{\circ}(a)\big)r\bullet\eta_{\omega}(x)
\\=&\big(L_{\odot}(a)\otimes I-I\otimes R_{\circ}(a)\big)r\bullet\eta_{\omega}(x)
+\big(R_{\prec}(a)\otimes I-I\otimes L_{\star}(a)\big)r\bullet\hat{\tau}\eta_{\omega}(x)
\\=&\Delta_{\succ,r}(a)\bullet\eta_{\omega}(x)
-\tau \Delta_{\prec,r}(a) \bullet\hat{\tau}\eta_{\omega}(x)
\end{align*}
This finishes the proof.
\end{proof}
 
 \section{ Infinite dimensionsal pre-Lie bialgebras induced from Zinbiel-dendriform bialgebras and quadratic Leibniz algebras}
 
In this section, we equip the tensor product of a Zinbiel-dendriform bialgebra 
and a quadratic Leibniz algebra with a pre-Lie bialgebra structure, 
and prove that quasi-triangularity (resp. triangularity) is inherited from the Zinbiel-dendriform bialgebras.
 Furthermore, the completed pre-Lie bialgebra obtained from the original structure via 
 a special quadratic Leibniz algebra recovers the original; 
 this construction is referred to as the affinization of Zinbiel-dendriform bialgebras.

\subsection{ Zinbiel-dendriform algebras and bialgebras}

We begin by recalling the bialgebra theory for Zinbiel-dendriform algebras, 
which are precisely commutative quadri-algebras \cite{AL}.
 As a special case of the latter, this theory is readily obtained from the framework in \cite{NB}.

\begin{defi} \cite{AL}
	A {\bf Zinbiel-dendriform algebra} is a triple $(A, \rhd,\lhd)$, where $A$ is a vector space 
and $\rhd,\lhd: A \otimes A \rightarrow A$ are binary operations satisfying
	\begin{align*}
	x\rhd(y\rhd z)&=(x\ast y+y\ast x)\rhd z,\\
	x\rhd(z\lhd y)&=z\lhd(x\ast y)=(x\rhd z)\lhd y+(z\lhd x)\lhd y, \;\; \forall x, y, z\in A.
	\end{align*}
where $x\ast y=x\rhd y+x\lhd y$.
  $(A,\ast)$ is a Zinbiel algebra, which is called the \textbf{sub-adjacent Zinbiel algebra} of $(A,\succ,\prec)$
  and $(A,\succ,\prec)$ is called the \textbf{compatible Zinbiel-dendriform algebra} on $(A,\ast)$.
\end{defi}

\begin{rmk}
Quadri-algebras were introduced in \cite{AL}. A particular case, namely commutative quadri-algebras, 
will be referred to as Zinbiel-dendriform algebras.
\end{rmk}

\begin{defi} \cite{NB} A Zinbiel-dendriform coalgebra is a triple $(D,\beta_{\rhd},\beta_{\lhd})$, where
$D$ is a vector space and $\beta_{\rhd},\beta_{\lhd}:D\longrightarrow D\otimes D$ are linear maps such that
the following conditions hold:
\begin{align}&\label{Zd1}
( I\otimes \beta_{\rhd})\beta_{\rhd}=(\tau \otimes I)(I\otimes\beta_{\rhd})\beta_{\rhd}
=((\beta_{\rhd}+\tau\beta_{\rhd}+\beta_{\lhd}
+\tau\beta_{\lhd})\otimes I)\beta_{\rhd},\\&
\label{Zd2}(I\otimes (\beta_{\rhd}+\beta_{\lhd}))\beta_{\lhd}=
(\beta_{\lhd}\otimes I)\beta_{\lhd}+(\tau \otimes I)(\beta_{\rhd}\otimes I)\beta_{\lhd}
=(\tau \otimes I)(I\otimes \beta_{\lhd})\beta_{\rhd}.\end{align}
\end{defi}

\begin{defi} \cite{NB} \label{DB2} A Zinbiel-dendriform bialgebra is a quintuple $(D,\rhd,\lhd,\beta_{\rhd},\beta_{\lhd})$
such that $(D,\rhd,\lhd)$ is a Zinbiel-dendriform algebra, $(D,\beta_{\rhd},\beta_{\lhd})$ is
a Zinbiel-dendriform coalgebra and the following compatible conditions hold:
\begin{small}
\begin{align}&\label{Zd3}
\beta_{\lhd}(x\ast y+y\ast x)=(L_{\ast}(y)\otimes I )\beta_{\lhd}(x)+(I\otimes L_{\ominus}(x))\beta_{\lhd}(y),
\\&\label{Zd4}\beta(x\ominus y)=(L_{\rhd}(x)\otimes I)\beta(y)
+(I\otimes R_{\ominus}(y))\beta_{\rhd}(x)=(I\otimes L_{\ominus}(x))\beta( y)+(L_{\lhd}(y)\otimes I)\beta_{\lhd}(x),
\\&\label{Zd5}\beta_{\ominus}(x\ast y)=(L_{\rhd}(x)\otimes I)\beta_{\ominus}(y)+(I\otimes R_{\ast}(y))\beta_{\rhd}(x),
\\&\label{Zd6}(L_{\lhd}(y)\otimes I)\tau\beta_{\ominus}(x)+(I\otimes R_{\lhd}(x))\beta(y)=(\beta+\tau\beta)(y\lhd x),
\\&\label{Zd7}(I\otimes L_{\ast}(x)-L_{\rhd}(x)\otimes I)\beta_{\ominus}(y)
=(I\otimes R_{\ast}(y))\beta_{\rhd}(x)-(R_{\lhd}(y)\otimes I)\tau\beta_{\lhd}(x),
\end{align}
\end{small}
where $x\ast y=x \rhd y+x\lhd y,~x\ominus y=x\rhd y+y\lhd x,~\beta=\beta_{\lhd}+\beta_{\rhd}, ~\beta_{\ominus}=\tau\beta_{\lhd}+\beta_{\rhd}
$ and $R_{\ast}=R_{\rhd}+R_{\lhd},
~L_{\ast}=L_{\rhd}+L_{\lhd},~L_{\ominus}=R_{\lhd}+L_{\rhd},~~R_{\ominus}=L_{\lhd}+R_{\rhd}$.
\end{defi}
By Eqs.~\eqref{Zd3}, \eqref{Zd4} and \eqref{Zd6}, we obtain that
\begin{align}&\label{Zd8}
(L_{\ast}(y)\otimes I )\beta_{\lhd}(x)+(I\otimes L_{\ominus}(x))\beta_{\lhd}(y)=
(L_{\ast}(x)\otimes I )\beta_{\lhd}(y)+(I\otimes L_{\ominus}(y))\beta_{\lhd}(x),
\\&\label{Zd9}(I\otimes L_{\lhd}(y))\beta_{\ominus}(x)+(R_{\lhd}(x)\otimes I)\tau\beta(y)
=(L_{\lhd}(y)\otimes I)\tau\beta_{\ominus}(x)+(I\otimes R_{\lhd}(x))\beta(y),
\\&\label{Zd10}\beta_{\rhd}(x\ast y+y\ast x)=(L_{\rhd}(y)\otimes I)\beta_{\rhd}(x)+(I\otimes (L_{\ast}+R_{\ast})(x)\beta_{\rhd}(y).
\end{align}

\begin{defi} \cite{NB} Let $(P,\rhd,\lhd)$ be a Zinbiel-dendriform 
algebra and let $\mathfrak{B}$ be a symmetric bilinear
form on $P$. If $\mathfrak{B}$ satisfies
\begin{align}&\label{Cq1}\mathfrak{B}(x\rhd y,z)=\mathfrak{B}(y,x\ast z+z\ast x),
\\&\label{Cq2} \mathfrak{B}(x\lhd y,z)=-\mathfrak{B}(y,z\rhd x+x\lhd z),
\end{align} then $\mathfrak{B}$ is
called {\bf invariant} on $P$, where $x\ast z=x\rhd z+x\lhd z$.
\end{defi}

\begin{defi} \cite{NB}
Let $(P,\rhd,\lhd)$ be a Zinbiel-dendriform algebra. Suppose that there is a Zinbiel-dendriform algebra structure
on $P^*$. If there is a Zinbiel-dendriform algebra structure on the direct sum
of the underlying vector space of $P$ and $P^*$ such that $P$ and
$P^*$ are Zinbiel-dendriform subalgebras and the bilinear form
$\mathfrak{B}_S$ on $P\oplus P^*$ given by 
\begin{equation}\label{Cq3}\mathfrak{B}_S(x+\zeta, y+\eta) =\langle x,\theta\rangle+\langle \zeta,y\rangle,  
  \ \ ~\forall~x, y\in P,~\zeta,\theta\in P^{*}.\end{equation} is
invariant, then $(P\oplus P^*,P,P^*,\mathfrak{B}_S)$ is called a
{\bf (standard) Manin triple } of Zinbiel-dendriform algebras associated to the
nondegenerate invariant bilinear form $\frak B_S$.
\end{defi}

\begin{thm} \cite{NB}
 Let $(P,\rhd_P,\lhd_P,\beta_{\rhd},\beta_{\lhd})$ be a Zinbiel-dendriform algebra. Assume that
there is a Zinbiel-dendriform algebra $(P^{*}, \rhd_{P^*},\lhd_{P^*})$ which is induced from
a Zinbiel-dendriform coalgebra $(P, \beta_{\rhd},\beta_{\lhd})$. Then
$(P,\rhd_P,\lhd_P,\beta_{\rhd},\beta_{\lhd})$ is a Zinbiel-dendriform bialgebra
if and only if
$(P\oplus P^*,P,P^*,\mathfrak{B}_S)$ is a standard Manin triple
of Zinbiel-dendriform algebras associated to the nondegenerate invariant
bilinear form $\frak B_S$ given by Eq.~\eqref{Cq3}. 
\end{thm}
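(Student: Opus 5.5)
We follow the standard Manin-triple argument, as in the Leibniz-dendriform case (Theorem~\ref{Be}) and for quadri-bialgebras. On $P\oplus P^*$ we write down the only Zinbiel-dendriform structure for which $P$ and $P^*$ are subalgebras and $\mathfrak{B}_S$ is invariant. Then we show that the Zinbiel-dendriform axioms for this structure are equivalent to the compatibility conditions \eqref{Zd3}--\eqref{Zd7}.

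\textbf{Step 1 (defining the products).} Let $\rhd_{P^*},\lhd_{P^*}$ be the duals of $\beta_{\rhd},\beta_{\lhd}$. Invariance \eqref{Cq1}--\eqref{Cq2} forces the mixed products. To see this, pair $x\rhd\zeta$ and $x\lhd\zeta$ (for $x\in P$, $\zeta\in P^*$) against $y\in P$ and $\theta\in P^*$ using $\mathfrak{B}_S$. This gives
\[
x\rhd\zeta=(L_{\ast}+R_{\ast})^*(x)\zeta+(\cdots)^*(\zeta)x,\qquad x\lhd\zeta=-(R_{\rhd}+L_{\lhd})^*(x)\zeta+\cdots,
\]
and similarly for $\zeta\rhd x$ and $\zeta\lhd x$. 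The $P$-components are governed by the dual coproducts. Because $\mathfrak{B}_S$ is nondegenerate, this structure is unique. Hence ``$(P\oplus P^*,P,P^*,\mathfrak{B}_S)$ is a standard Manin triple'' is equivalent to ``this uniquely determined product is a Zinbiel-dendriform algebra''. Invariance and the isotropy of $P$ and $P^*$ hold by construction.

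\textbf{Step 2 (decomposing the axioms).} Since both axioms of a Zinbiel-dendriform algebra are trilinear, it suffices to check them on triples from $P$ and $P^*$. Triples lying entirely in $P$ or entirely in $P^*$ reduce to the algebra and coalgebra axioms, which are assumed. Triples with two entries in $P$ and one in $P^*$ split into a $P$-part and a $P^*$-part. The $P^*$-parts reduce, via invariance, to the Zinbiel-dendriform identities in $P$ (this is the standard matched-pair fact that $P^*$ is a bimodule through the dual representations). The $P$-parts, paired against $P^*$, become identities among $\beta_{\rhd},\beta_{\lhd}$ and the multiplication operators. These are exactly \eqref{Zd3}--\eqref{Zd7}, together with consequences such as \eqref{Zd8}--\eqref{Zd10}. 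By the symmetry between $P$ and $P^*$, triples with two entries in $P^*$ give the same conditions.

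\textbf{Step 3 (both directions).} For the ``if'' direction, the Manin triple gives an algebra satisfying all the identities of Step 2, so \eqref{Zd3}--\eqref{Zd7} follow. For the ``only if'' direction, \eqref{Zd3}--\eqref{Zd7}, together with the derived identities \eqref{Zd8}--\eqref{Zd10}, give every mixed identity, so the product on $P\oplus P^*$ is Zinbiel-dendriform.

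The main obstacle is Step 2's bookkeeping. There are two axioms, each with several equalities, evaluated over all placements of elements of $P$ and $P^*$. Each resulting identity must be matched with \eqref{Zd3}--\eqref{Zd7} after dualization and insertion of $\tau$. We will use the shortcut that Zinbiel-dendriform algebras are exactly the commutative quadri-algebras: $x\rhd y$ and $y\lhd x$ play the roles of two of the four quadri-operations. With this identification the result follows by specializing the quadri-bialgebra Manin triple theorem of \cite{NB}, applied to the commutative case where the quadri-operations satisfy $x\nwarrow y=y\nearrow x$-type symmetries. What must still be checked is that commutativity is preserved on $P\oplus P^*$ and that the compatibility conditions of \cite{NB} collapse to \eqref{Zd3}--\eqref{Zd7}.
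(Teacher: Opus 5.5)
Your outline is correct and ends where the paper does. The paper gives no argument of its own, only the citation to \cite{NB} and the remark that Zinbiel-dendriform (bi)algebras are the commutative special case of Ni--Bai's quadri-(bi)algebra framework; that is your final shortcut, and the paper likewise leaves implicit the two checks you flag (commutativity of the double and the collapse of the compatibility conditions to \eqref{Zd3}--\eqref{Zd7}).

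If you carry out the direct route of Steps 1--3, one small point needs fixing. Pairing $x\rhd\zeta$ against $\theta\in P^*$ via \eqref{Cq1} alone is circular, and nondegeneracy does not resolve that. Uniqueness of the mixed products comes from the identity $\mathfrak{B}(c\rhd a,b)=\mathfrak{B}(b\rhd a,c)$, which follows from \eqref{Cq2} and the symmetry of $\mathfrak{B}$; it gives $\mathfrak{B}_S(x\rhd\zeta,\theta)=\langle \theta\rhd_{P^*}\zeta, x\rangle$.
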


\begin{defi}  \cite{NB}
Let $(M,\rhd,\lhd)$ be a Zinbiel-dendriform algebra and $r\in M\otimes M$. The following equation 
\begin{equation}Z(r)=r_{12}\ominus r_{23}-r_{13}\ast r_{23}+
r_{13}\lhd r_{12}=0.
\end{equation}
is called the Zinbiel-dendriform
Yang-Baxter equation in $(M,\rhd,\lhd)$ or the ZD-YBE in short.
\end{defi}

\begin{ex} \label{Lq3}
Let $P$ be a $3$-dimensional vector space with basis $\{u,v,w\}$. 
Define the binary operations $\rhd$ and $\lhd$ on $P$ by the following nonzero products:
\[
u\rhd u = v,\qquad u\lhd u = w,
\]
and set all other products to be zero. By direct computations, $(P,\rhd,\lhd)$ is a Zinbiel-dendriform algebra.
Moreover, $r=v\otimes w-w\otimes v$ is a skew-symmetric solution of the ZD-YBE in $(P,\rhd,\lhd)$.
\end{ex}

\begin{defi} \label{In1}
 Let
$(M,\rhd,\lhd)$ be a Zinbiel-dendriform algebra and
let $r\in M\otimes M$.
 Then $r$ is called {\bf invariant} in $M\otimes M$
 if \begin{align}&\label{Iv3}
(I\otimes L_{\ominus}(m)- L_{\ast}(m)\otimes I)r=0,
\\&\label{Iv4}(L_{\rhd}(m)\otimes I-I\otimes (L_{\ast}+R_{\ast})(m) )r=0.
\end{align}
\end{defi}

\begin{pro}\label{Qy} Let $(M,\rhd,\lhd)$ be a Zinbiel-dendriform algebra and
 and $r=\sum_{i}a_i\otimes b_i\in M\otimes M$. 
Assume that $r+\tau(r)$ is invariant in $M\otimes M$. If $r$ is a solution of the
ZD-YBE in $(M,\rhd,\lhd)$, then $r$ is also a solution of the following equation:
\begin{align*}Z_{1}(r)=r_{13}\ast r_{23}+r_{23}\ast r_{13}+r_{12}\rhd r_{13}-r_{12}\rhd r_{23}=0.\end{align*}
\end{pro}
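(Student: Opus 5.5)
The claim is that $Z(r)+Z_1(r)$ can be written entirely in terms of the action of $r+\tau(r)$ under the operators in Eqs.~\eqref{Iv3}--\eqref{Iv4}. Once that is done, $Z(r)=0$ together with the invariance of $r+\tau(r)$ forces $Z_1(r)=0$. This mirrors the identity $D(r)+D_1(r)=0$ (Theorem 4.4 of \cite{SG}) used in the Leibniz-dendriform case.

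First I will expand both expressions in components. With $r=\sum_i a_i\otimes b_i$ we have
\begin{align*}
Z(r)+Z_1(r)=\sum_{i,j}\Big(&a_i\otimes a_j\otimes b_i\ominus b_j+a_i\otimes b_j\ast b_i\otimes \ldots\Big),
\end{align*}
where each of the seven tensor monomials is written out: $r_{12}\ominus r_{23}$, $r_{13}\lhd r_{12}$, $r_{23}\ast r_{13}$, $r_{12}\rhd r_{13}$ and $-r_{12}\rhd r_{23}$. The two terms $\pm r_{13}\ast r_{23}$ cancel at once. What remains is
\[
Z(r)+Z_1(r)=r_{12}\ominus r_{23}-r_{12}\rhd r_{23}+r_{13}\lhd r_{12}+r_{23}\ast r_{13}+r_{12}\rhd r_{13}.
\]
Using $x\ominus y=x\rhd y+y\lhd x$, the combination $r_{12}\ominus r_{23}-r_{12}\rhd r_{23}$ collapses to a single $\lhd$-term in which $b_i$ sits in the middle slot. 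That is, it has the form $\sum a_i\otimes (a_j\lhd b_i)\otimes b_j$.

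Next I will group the remaining terms by which tensor leg carries the product and which element of $r$ acts. Fixing the pair $(a_j,b_j)$ as the acting element, the terms should regroup as
\[
\sum_j \big(L_{\ast}(m)\otimes I - I\otimes L_{\ominus}(m)\big)(r+\tau r)\ \text{or}\ \big(L_{\rhd}(m)\otimes I - I\otimes(L_\ast+R_\ast)(m)\big)(r+\tau r),
\]
with $m\in\{a_j,b_j\}$ and with the untouched factor placed in the remaining leg. Each such expression is an image of \eqref{Iv3} or \eqref{Iv4} applied to $r+\tau(r)$, hence zero.

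Concretely, the terms with the product in leg 1 are $r_{12}\rhd r_{13}$ and $r_{13}\lhd r_{12}$. I will pair these with the leg-2 and leg-3 terms so that each pair matches $(L_\rhd(m)\otimes I)$ against $(I\otimes (L_\ast+R_\ast)(m))$. The leftover $\lhd$-terms should match $(L_\ast(m)\otimes I)$ against $(I\otimes L_\ominus(m))$, after splitting $L_\ast=L_\rhd+L_\lhd$ and $L_\ominus=L_\rhd+R_\lhd$.

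The main obstacle is this bookkeeping step. The terms coming from $r$ and from $\tau(r)$ must both appear in order to complete $r+\tau(r)$, and there is no reason the five terms above provide exactly that. So I expect to add and subtract auxiliary terms, for instance $\sum a_i\otimes b_j\otimes b_i\ast a_j$, in order to complete the invariant expressions. It may also be necessary to reuse $Z(r)=0$ itself, in its leg-permuted forms $(\tau\otimes I)Z(r)$ etc., to absorb the residual terms.

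I will try first to write
\[
Z_1(r)=-Z(r)+(\text{invariance terms for }r+\tau r)
\]
directly, matching terms leg by leg. If a residue persists, I will next try $Z_1(r)=\sigma Z(r)+(\text{invariance terms})$ for a suitable permutation $\sigma$ of tensor legs.
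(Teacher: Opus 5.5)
\textbf{There is a genuine gap.} Your overall strategy matches the paper's: express $Z_1(r)$ through copies of $Z(r)$ plus operators applied to $r+\tau(r)$. However, the proposal never produces the identity, and both explicit forms you commit to cannot work. When $r$ is skew-symmetric, $r+\tau(r)=0$, so every invariance term vanishes identically. Your first ansatz would then give $Z_1(r)=-Z(r)$, and your fallback would give $Z_1(r)=\sigma Z(r)$, for \emph{every} skew-symmetric $r$. Both are false. In Example~\ref{Lq3} ($u\rhd u=v$, $u\lhd u=w$), take $r=u\otimes v-v\otimes u$; then
\[
Z(r)=-2\,v\otimes v\otimes v-v\otimes w\otimes v-v\otimes v\otimes w+w\otimes v\otimes v,\qquad Z_1(r)=4\,v\otimes v\otimes v+2\,v\otimes v\otimes w,
\]
and no $\pm\sigma Z(r)$ has coefficient $4$ on $v\otimes v\otimes v$. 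The obstruction is already visible in your reduction. The surviving term $r_{23}\ast r_{13}=\sum_{i,j}a_j\otimes a_i\otimes (b_i\ast b_j)$ multiplies two second legs of $r$, and it cannot be completed to an expression in $r+\tau(r)$ using the terms at hand. It is exactly $(\tau\otimes I)(r_{13}\ast r_{23})$, so it is removed by a \emph{second}, flipped copy of the ZD-YBE. You mention this copy as a possibility but never combine it with the first.

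The identity you need is the paper's, up to a sign on the last group that is immaterial because that group vanishes:
\[
Z_1(r)=-Z(r)-(\tau\otimes I)Z(r)+\sum_j\big(R_{\ominus}(a_j)\otimes I+I\otimes L_{\lhd}(a_j)\big)(r+\tau(r))\otimes b_j .
\]
Here $(\tau\otimes I)Z(r)$ also supplies the $\tau(r)$-halves $\sum_{i,j}(b_i\ominus a_j)\otimes a_i\otimes b_j$ and $\sum_{i,j}b_i\otimes (a_j\lhd a_i)\otimes b_j$, which your five terms lack. Note also that the operator appearing is neither \eqref{Iv3} nor \eqref{Iv4} alone. Adding them gives $\big(L_{\lhd}(m)\otimes I+I\otimes R_{\ominus}(m)\big)(r+\tau(r))=0$. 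You must then apply $\tau$ and use $\tau(r+\tau(r))=r+\tau(r)$ to move $R_{\ominus}$ into the first leg. With these two ingredients the argument closes; as written, the proposal is a search plan rather than a proof.
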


\begin{proof}
Since $r+\tau(r)$ is invariant in $M\otimes M$, it follows that
\begin{align*}\sum_{i}(R_{\ominus}(u_i)\otimes I+I\otimes L_{\lhd}(u_i))(r+\tau(r))\otimes v_i=0.\end{align*}
Note that 
\begin{align*}
&Z_{1}(r)=-Z(r)-\sigma_{12}Z(r)-(r_{12}+r_{21})\ominus r_{13}-r_{23}\lhd (r_{12}+r_{21})
\\=&-Z(r)-\sigma_{12}Z(r)-\sum_{i}(R_{\ominus}(u_i)\otimes I+I\otimes L_{\lhd}(u_i))(r+\tau(r))\otimes v_i
\\=&0.
\end{align*}
The proof is completed.
\end{proof}

Following the same procedure, we obtain the following result.

\begin{thm}\label{Qy1} Let $(M,\rhd,\lhd)$ be a Zinbiel-dendriform algebra and
 and $r=\sum_{i}a_i\otimes b_i\in M\otimes M$. Assume that $r$ is a solution of the
ZD-YBE in $(M,\rhd,\lhd)$ and $r+\tau(r)$ is invariant.
Then $(M,\rhd,\lhd,\beta_{\rhd,r},\beta_{\lhd,r})$ is a Zinbiel-dendriform bialgebra, where
$\beta_{\rhd,r},\beta_{\lhd,r}$ are given by
\begin{align}&\label{CB4}
\beta_{\lhd,r}(m)=(I\otimes L_{\ominus}(m)- L_{\ast}(m)\otimes I)r,
\\&\label{CB5}\beta_{\rhd,r}(m)=(L_{\rhd}(m)\otimes I-I\otimes (L_{\ast}+R_{\ast})(m) )r.
\end{align}
\end{thm}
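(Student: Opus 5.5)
We verify the three ingredients of Definition~\ref{DB2} for $(M,\rhd,\lhd,\beta_{\rhd,r},\beta_{\lhd,r})$. These are that $(M,\beta_{\rhd,r},\beta_{\lhd,r})$ is a Zinbiel-dendriform coalgebra, i.e.\ Eqs.~\eqref{Zd1}-\eqref{Zd2} hold, and that the five compatibility conditions \eqref{Zd3}-\eqref{Zd7} hold. We follow the pattern of Theorem~\ref{Qy0} for Leibniz-dendriform algebras. Throughout, write $r=\sum_i a_i\otimes b_i$, and note that $\beta_{\lhd,r}$, $\beta_{\rhd,r}$ are "coboundary" maps built from the actions $L_\ast$, $L_{\ominus}$, $L_{\rhd}$, $L_\ast+R_\ast$.

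The compatibility conditions \eqref{Zd3}-\eqref{Zd7} come first, since they only involve $r$ linearly. Substituting \eqref{CB4}-\eqref{CB5} into each side, every condition becomes a sum of terms of the form $(\phi(x)\otimes\psi(y))r$ or $(\phi(x)\psi(y)\otimes I)r$, $(I\otimes\phi(x)\psi(y))r$. The terms where both operators act on the same tensor leg cancel by the Zinbiel-dendriform axioms, rewritten as operator identities. Examples are $L_{\rhd}(x)L_{\rhd}(y)=L_{\rhd}(x\ast y+y\ast x)$, $L_{\rhd}(x)R_{\lhd}(y)=R_{\lhd}(x\ast y)$, and $R_{\lhd}(y)L_{\lhd}(x)=\dots$, together with their consequences for $L_\ast,R_\ast,L_\ominus,R_\ominus$. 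The terms with operators on different legs either cancel in pairs or assemble into expressions $(\phi(x)\otimes I - I\otimes\psi(x))(r+\tau(r))$ with $(\phi,\psi)=(L_\ast,L_\ominus)$ or $(L_\rhd,L_\ast+R_\ast)$. These vanish by the invariance \eqref{Iv3}-\eqref{Iv4} of $r+\tau(r)$. For conditions involving $\tau\beta$ or $\beta_\ominus=\tau\beta_\lhd+\beta_\rhd$, we first rewrite $\tau r=(r+\tau r)-r$ to expose this structure.

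Next comes the coalgebra part. For \eqref{Zd1}-\eqref{Zd2} we compute, e.g., $(I\otimes\beta_{\rhd,r})\beta_{\rhd,r}(m)-(\tau\otimes I)(I\otimes\beta_{\rhd,r})\beta_{\rhd,r}(m)$ and the other required differences. These are quadratic in $r$. As in \cite{SG,NB}, each such difference can be written as an operator applied to $m$ acting on one of the legs of $Z(r)$, $\sigma_{12}Z(r)$, or $Z_1(r)$, plus terms in $r+\tau(r)$ that cancel by invariance. An example of such an operator is $(I\otimes I\otimes L_{\rhd}(m)-\dots)$ applied to $Z(r)$, possibly after permuting legs. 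Proposition~\ref{Qy} is essential here: it supplies $Z_1(r)=0$ from $Z(r)=0$ and the invariance of $r+\tau(r)$. This second identity covers the symmetrized combinations, such as $(\beta_{\rhd}+\tau\beta_{\rhd}+\beta_{\lhd}+\tau\beta_{\lhd})\otimes I$, that cannot be expressed through $Z(r)$ alone.

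The main obstacle is this coalgebra step. One must identify the correct operators and leg permutations expressing each of the four equalities in \eqref{Zd1}-\eqref{Zd2} in terms of $Z(r)$ and $Z_1(r)$, and the bookkeeping of $\tau$'s is heavy. To organize it, I would use the alternative route via the Manin triple characterization: define the dual products on $M^*$ from $\beta_{\rhd,r},\beta_{\lhd,r}$ and check that, in terms of the map $r^\sharp:M^*\to M$, they are $r^\sharp$-twisted products. That $M^*$ is Zinbiel-dendriform then reduces to $r^\sharp$ being a homomorphism-like map, which is equivalent to $Z(r)=0$ modulo the invariant symmetric part. I would attempt the direct computation first, falling back on this reformulation for the hardest identity, $(I\otimes\beta_{\lhd})\beta_{\rhd}$ in \eqref{Zd2}.
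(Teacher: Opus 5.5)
Your proposal follows the same route as the paper. The paper gives no details beyond saying the result follows ``by the same procedure'', meaning the direct verification used for the Leibniz-dendriform analogue (Theorem~\ref{Qy0}), with Proposition~\ref{Qy} supplying $Z_1(r)=0$; your split is exactly that: \eqref{Zd3}--\eqref{Zd7} from the Zinbiel-dendriform axioms plus invariance of $r+\tau(r)$, and \eqref{Zd1}--\eqref{Zd2} from $Z(r)=0$, $Z_1(r)=0$ and invariance. When you carry it out, invariance is needed not only in the two forms you list but also in their consequences obtained from the symmetry of $r+\tau(r)$; for example, closing \eqref{Zd6} needs $(R_{\ominus}(m)\otimes I+I\otimes L_{\lhd}(m))(r+\tau(r))=0$, which is the sum of the flipped \eqref{Iv3} and \eqref{Iv4} and is the same identity used in the proof of Proposition~\ref{Qy}.
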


\begin{defi} Let $(M,\rhd,\lhd)$ be a Zinbiel-dendriform algebra and
 and $r\in M\otimes M$. Assume that $r$ is a solution of the
ZD-YBE in $(M,\rhd,\lhd)$ and $r+\tau(r)$ is invariant.
Then the Zinbiel-dendriform
  bialgebra $(M,\rhd,\lhd,\beta_{\rhd,r},\beta_{\lhd,r})$ induced by $r$ is called a {\bf quasi-triangular} Zinbiel-dendriform
 bialgebra. In particular, if $r$ is skew-symmetric, $(M,\rhd,\lhd,\beta_{\rhd,r},\beta_{\lhd,r})$
is called a {\bf triangular} Zinbiel-dendriform bialgebra, where $\beta_{\rhd,r}$ and $\beta_{\lhd,r}$
are given by Eqs.~(\ref{CB4})-(\ref{CB5}) respectively.
\end{defi}

\subsection{Pre-Lie (co)algebras induced from Zinbiel-dendriform (co)algebras and Leibniz (co)algebras}
In this subsection, we endow the tensor product of a Zinbiel-dendriform (co)algebra 
and a Leibniz (co)algebra with a pre-Lie (co)algebra structure.

A {\bf completed Leibniz coalgebra} \cite{HL} is a pair $(Q, \vartheta)$, where $Q=\oplus_{i\in\mathbb{Z}}Q_{i}$ 
is a $\mathbb{Z}$-graded vector space with all $Q_i$ finite-dimensional 
and $\vartheta: Q\rightarrow Q\hat{\otimes}Q$ is a linear map satisfying
		\begin{equation} \label{Lc1}
			(I\hat{\otimes}\vartheta)\vartheta=(\vartheta\hat{\otimes}I)\vartheta +(\hat{\tau}\hat{\otimes}I)(I\hat{\otimes}\vartheta)\vartheta. 
		\end{equation}
		When $Q = Q_{0}$, it is simply called a {\bf Leibniz coalgebra}.    
    By Eq.~\eqref{Lc1}, we have
{\small
	\begin{flalign}&\label{Lc2}(\vartheta\hat{\otimes}I)\vartheta=-(\hat{\tau}\hat{\otimes}I)(\vartheta\hat{\otimes}I)\vartheta,
\\&\label{Lc3}(I\hat{\otimes}\vartheta)\hat{\tau}\vartheta
=-(I\hat{\otimes}\hat{\tau})(I\hat{\otimes}\vartheta)\hat{\tau}\vartheta
=(\hat{\tau}\hat{\otimes} I)(I\hat{\otimes} \hat{\tau})(I\hat{\otimes}\vartheta)\vartheta
-(\hat{\tau}\hat{\otimes} I)(\vartheta\hat{\otimes}I)\hat{\tau}\vartheta,
\\&\label{Lc4}
(I\hat{\otimes}  \hat{\tau})(\vartheta\hat{\otimes}I)\vartheta
=-(\hat{\tau}\hat{\otimes} I)(I\hat{\otimes} \hat{\tau})(I\hat{\otimes}\vartheta)\hat{\tau}\vartheta=
(I\hat{\otimes}\hat{\tau})(I\hat{\otimes}\vartheta)\vartheta-(\vartheta\hat{\otimes}I)\hat{\tau}\vartheta.\end{flalign}}

\begin{ex}\label{Lq0}
Let \(W=\operatorname{span}\{\varepsilon_1,\varepsilon_2,\varepsilon_3,\varepsilon_4\}\). Define the non-zero products
\begin{equation*}\varepsilon_2\circ \varepsilon_1=\varepsilon_1, 
~ \varepsilon_2\circ \varepsilon_2=\varepsilon_1, 
~\varepsilon_2\circ \varepsilon_3=-\varepsilon_3-\varepsilon_4,~
\varepsilon_3\circ \varepsilon_1=\varepsilon_4, 
~ \varepsilon_3\circ \varepsilon_2=\varepsilon_3+2\varepsilon_4,\end{equation*}
and all others zero. Then $(W,\circ)$ is a Leibniz algebra. 
Assume that
\[Q=W\otimes \mathbb K[t,t^{-1}]=\bigoplus_{n\in\mathbb Z}(W\otimes t^n)\]
equipped with \[(x\otimes t^m)\circ(y\otimes t^n)=(x\circ y)\otimes t^{m+n},\]
then $(Q,\circ)$
is a $\mathbb Z$-graded Leibniz algebra. Moreover, define a
comultiplication $\vartheta:Q\longrightarrow Q\hat{\otimes}Q$
 by~(denote $\varepsilon_kt^m=\varepsilon_k\otimes t^m,~k=1,2,3,4)$,
\[\begin{aligned}\vartheta(\varepsilon_1t^n)&
=\sum_{i+j=n}\varepsilon_1t^i\otimes \varepsilon_4t^j-\varepsilon_4t^i\otimes \varepsilon_1t^j
,\\[6pt]\vartheta(\varepsilon_2t^n)
&=\sum_{i+j=n}\varepsilon_1t^i\otimes \varepsilon_3t^j+2\varepsilon_1t^i\otimes \varepsilon_4t^j-\varepsilon_4t^i\otimes \varepsilon_1t^j,
\\[6pt]\vartheta(\varepsilon_3t^n)&=\sum_{i+j=n}\varepsilon_4t^i\otimes \varepsilon_3t^j+\varepsilon_4t^i\otimes \varepsilon_4t^j,
\\[6pt]\vartheta(\varepsilon_4t^n)&=0.
\end{aligned}\]
A direct computation shows that $( Q,\vartheta)$ is a completed Leibniz coalgebra.
\end{ex}

\begin{pro}\label{Tb5}
Let $(P, \rhd,\lhd)$ be a Zinbiel-dendriform algebra and $(Q=\oplus_{i\in\mathbb{Z}}Q_{i}, \circ)$ be a $\mathbb{Z}$-graded Leibniz algebra.
Define a binary operation $\cdot$ on $P\otimes Q$ by
\begin{small}\begin{align} \label{Pd2}
(a\otimes x)\cdot(b\otimes y)
=a\rhd b\otimes x\circ y-b\lhd a\otimes y\circ x, \end{align}\end{small}
for any $a,b\in P$ and $x,y\in Q$. Then
$(P\otimes Q, \cdot)$ is a pre-Lie algebra, which is called the {\bf pre-Lie algebra induced 
from $(P, \rhd,\lhd)$ by a $(Q, \circ)$}. Moreover,
 if $(Q=\oplus_{i\in\mathbb{Z}}Q_{i}, \circ)$ is the $\mathbb{Z}$-graded Leibniz algebra given in
Example \ref{Lq0}, then $(P\otimes Q, \cdot)$ is a $\mathbb{Z}$-graded pre-Lie algebra
 if and only if $(P, \rhd,\lhd)$ is a Zinbiel-dendriform algebra.
\end{pro}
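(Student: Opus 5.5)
The plan is to mirror the proof of Proposition~\ref{Tb1}: I will show that the associator of the product~\eqref{Pd2} is left-symmetric, i.e.\ $(a\otimes x,b\otimes y,c\otimes z)=(b\otimes y,a\otimes x,c\otimes z)$ for all $a,b,c\in P$ and $x,y,z\in Q$.

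\textbf{Step 1: expand the associator.} Expanding $(a\otimes x,b\otimes y,c\otimes z)$ via Eq.~\eqref{Pd2} gives eight terms. Their $Q$-components are the words
\[
(x\circ y)\circ z,\quad z\circ(x\circ y),\quad (y\circ x)\circ z,\quad z\circ(y\circ x),\quad x\circ(y\circ z),\quad (y\circ z)\circ x,\quad x\circ(z\circ y),\quad (z\circ y)\circ x .
\]
The expansion of $(b\otimes y,a\otimes x,c\otimes z)$ is the same list with $(a,x)\leftrightarrow(b,y)$.

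\textbf{Step 2: reduce the $Q$-words to a basis.} I will use the Leibniz identity $x\circ(y\circ z)=(x\circ y)\circ z+y\circ(x\circ z)$ to rewrite every word in a reduced spanning family. The natural choice is the words of the form $u\circ(v\circ w)$ together with those of the form $(v\circ w)\circ u$ with $v\circ w$ in a fixed order. In particular, $(x\circ y)\circ z=x\circ(y\circ z)-y\circ(x\circ z)$ eliminates all left-bracketed words whose inner product sits in the leftmost slot. Words such as $z\circ(x\circ y)$ and $(y\circ z)\circ x$ stay as they are. The identity $(x\circ y)\circ z+(y\circ x)\circ z=0$, which follows from the Leibniz identity, pairs the symmetric combinations.

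\textbf{Step 3: check the $P$-coefficients.} After the reduction, I collect the $P$-coefficient of each remaining $Q$-word. Each must vanish by the Zinbiel-dendriform axioms:
\begin{itemize}
\item $x\rhd(y\rhd z)=(x\ast y+y\ast x)\rhd z$ for the pure-$\rhd$ terms;
\item $x\rhd(z\lhd y)=z\lhd(x\ast y)=(x\rhd z)\lhd y+(z\lhd x)\lhd y$ for the mixed terms.
\end{itemize}
This is exactly parallel to the grouping by the coefficients $x\diamond(y\diamond z)$, $(y\diamond x)\diamond z$, etc.\ in Proposition~\ref{Tb1}, with the roles of the Koszul dual operads exchanged.

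\textbf{Step 4: the converse for Example~\ref{Lq0}.} Grading is automatic, since $\circ$ adds degrees. For the equivalence I will run the same computation backwards. Substitute specific elements of $Q$, namely $\varepsilon_k t^m$, for which the reduced $Q$-words are linearly independent (for instance words producing $\varepsilon_1$, $\varepsilon_3$, $\varepsilon_4$ in different degrees). Left-symmetry of the associator then forces each $P$-coefficient to vanish separately, and these coefficients are precisely the defining identities of a Zinbiel-dendriform algebra.

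\textbf{Main obstacle.} The hard part is the bookkeeping in Steps 2--3: choosing a reduction of the degree-three Leibniz words so that the coefficients split cleanly into the two Zinbiel-dendriform identities. For the converse, the worry is that the specific Leibniz algebra of Example~\ref{Lq0} may not separate all needed words. The fix would be to evaluate on explicit triples, e.g.\ $(\varepsilon_2,\varepsilon_3,\varepsilon_2)$ and $(\varepsilon_2,\varepsilon_2,\varepsilon_3)$, and read off each axiom from a coefficient of a basis vector such as $\varepsilon_3$ or $\varepsilon_4$.
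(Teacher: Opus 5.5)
Your forward direction follows the paper's argument. You expand both associators (your eight terms are the paper's), rewrite the Leibniz words using $(u\circ v)\circ w=u\circ(v\circ w)-v\circ(u\circ w)$, and match coefficients with the Zinbiel-dendriform axioms. Make sure you reduce to an actual basis of the six-dimensional multilinear component, for instance the six right-normed words. A family containing all $u\circ(v\circ w)$ plus some $(v\circ w)\circ u$ is linearly dependent, and then the coefficients need not vanish one by one. In the right-normed basis,
\begin{align*}
&(a\otimes x,b\otimes y,c\otimes z)-(b\otimes y,a\otimes x,c\otimes z)\\
&=A_1\otimes x\circ(y\circ z)+A_2\otimes y\circ(x\circ z)+A_3\otimes z\circ(x\circ y)\\
&\quad+A_4\otimes z\circ(y\circ x)+A_5\otimes y\circ(z\circ x)+A_6\otimes x\circ(z\circ y),
\end{align*}
where
\begin{itemize}
\item $A_2=b\rhd(a\rhd c)-(a\ast b+b\ast a)\rhd c$,
\item $A_3=(a\rhd c)\lhd b+(c\lhd a)\lhd b-c\lhd(a\ast b)$,
\item $A_6=a\rhd(c\lhd b)-(a\rhd c)\lhd b-(c\lhd a)\lhd b$,
\end{itemize}
and $A_1,A_4,A_5$ are $-A_2,-A_3,-A_6$ with $a\leftrightarrow b$. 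Each $A_k$ vanishes by an axiom.

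The gap is in the converse, and the mechanism you propose cannot work. For inputs $\varepsilon_it^m,\varepsilon_jt^n,\varepsilon_kt^l$, all six words lie in $W\otimes t^{m+n+l}$, so the grading separates nothing. Moreover, every $u\circ(v\circ w)$ in $W$ lies in $\operatorname{span}\{\varepsilon_1,\varepsilon_3,\varepsilon_4\}$, so the six words are never linearly independent and no single substitution forces all $A_k$ to vanish.

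The fallback triples you name do not isolate the axioms either:
\begin{itemize}
\item For $(x,y,z)=(\varepsilon_2,\varepsilon_3,\varepsilon_2)$ the six words are $-\varepsilon_3-\varepsilon_4$, $\varepsilon_4$, $\varepsilon_3+\varepsilon_4$, $-\varepsilon_3-\varepsilon_4$, $\varepsilon_4$, $\varepsilon_3+\varepsilon_4$. This gives only $A_2+A_5=0$ and $A_1=A_3-A_4+A_6$.
\item For $(\varepsilon_2,\varepsilon_2,\varepsilon_3)$ you get only $A_3+A_4=0$ and $A_1+A_2=A_5+A_6$.
\end{itemize}
These relations mix different axioms and do not yield them individually.

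The missing idea is to use $\varepsilon_1$, which left-annihilates $W$, together with $\varepsilon_2\circ\varepsilon_1=\varepsilon_1$, $\varepsilon_3\circ\varepsilon_1=\varepsilon_4$ and $W\circ\varepsilon_4=0$. Take the three triples $(x,y,z)=(\varepsilon_2,\varepsilon_1,\varepsilon_3)$, $(\varepsilon_3,\varepsilon_1,\varepsilon_2)$ and $(\varepsilon_2,\varepsilon_3,\varepsilon_1)$. In each case exactly one right-normed word survives, namely $z\circ(x\circ y)$, $x\circ(z\circ y)$ and $y\circ(x\circ z)$ respectively, and it equals $\varepsilon_4$. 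Comparing $\varepsilon_4t^{m+n+l}$-coefficients then gives $A_3=A_6=A_2=0$. That is,
\[
c\lhd(a\ast b)=(a\rhd c+c\lhd a)\lhd b=a\rhd(c\lhd b),\qquad b\rhd(a\rhd c)=(a\ast b+b\ast a)\rhd c,
\]
which are exactly the Zinbiel-dendriform axioms. This is what the paper does.
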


\begin{proof}
To prove that $(P\otimes Q, \cdot)$ is a pre-Lie algebra, we only need to check that
$(a\otimes x,b\otimes y,c\otimes z)=(b\otimes y,a\otimes x,c\otimes z)$ for all $a, b, c\in A$, $x,y,z\in B$.
In fact, using Eq.~\eqref{Pd1} we obtain
\begin{small}\begin{align*}
&(a\otimes x,b\otimes y,c\otimes z)
\\=&(a\rhd  b)\rhd c\otimes  (x\circ y)\circ z-c\lhd(a\rhd b)\otimes z\circ (x\circ y)
\\&- (b\lhd a)\rhd c\otimes  (y\circ x)\circ z+c\lhd (b\lhd a)\otimes z\circ (y\circ x)
-a\rhd  (b\rhd c)\otimes  x\circ (y\circ z)\\&+(b\rhd c)\lhd a\otimes (y\circ z)\circ x 
+ a\rhd(c\lhd b)\otimes x \circ(z\circ y) -(c\lhd b)\lhd a\otimes (z\circ y)\circ x,
\end{align*}\end{small}
and
\begin{small}\begin{align*}
&(b\otimes y,a\otimes x,c\otimes z)
\\=&(b\rhd  a)\rhd c\otimes  (y\circ x)\circ z-c\lhd(b\rhd a)\otimes z\circ (y\circ x)
\\&- (a\lhd b)\rhd c\otimes  (x\circ y)\circ z+c\lhd (a\lhd b)\otimes z\circ (x\circ y)
-b\rhd  (a\rhd c)\otimes  y\circ (x\circ z)\\&+(a\rhd c)\lhd b\otimes (x\circ z)\circ y 
+ b\rhd (c\lhd a)\otimes y \circ(z\circ x) -(c\lhd a)\lhd b\otimes (z\circ x)\circ y.
\end{align*}\end{small}
Combining Eqs.~\eqref{Z1} and ~\eqref{Ld1}-\eqref{Ld3}, we get
\begin{small}\begin{align*}
&(a\otimes x,b\otimes y,c\otimes z)-(b\otimes y,a\otimes x,c\otimes z)
\\=&[(a\rhd  b+b\rhd  a+a\lhd  b+b\lhd a)\rhd c-a\rhd ( b\rhd c)]\otimes  x\circ (y\circ z)
+[b\rhd ( a\rhd c)-a\rhd ( b\rhd c)]\otimes  y\circ(x\circ z)
\\&+[a\rhd ( c\lhd b)-c\lhd ( a\rhd b+a\lhd b)]\otimes  x\circ (z\circ y)
+[( a\rhd c+c\lhd a)\lhd b-c\lhd ( a\rhd b+a\lhd b)]\otimes  (z\circ x)\circ y
\\&+[c\lhd ( b\rhd a+b\lhd a)-( b\rhd c+c\lhd b)\lhd a]\otimes  (z\circ y)\circ x
+[c\lhd ( b\rhd a+b\lhd a)-b\rhd (c\lhd a)]\otimes  y\circ(z\circ x)
\\&=0.
\end{align*}\end{small}
Conversely,
 suppose that $(P\otimes Q, \cdot)$ is a $\mathbb{Z}$-graded pre-Lie algebra
 with $(Q=\oplus_{i\in\mathbb{Z}}Q_{i}, \circ)$
being the $\mathbb{Z}$-graded Leibniz algebra given in Example \ref{Lq0}. 
Comparing the coefficients of
 $t^{m+n+l}\varepsilon_4$ 
 in the following expansions respectively
 \begin{align*}&(a\otimes \varepsilon_2t^{m},b\otimes \varepsilon_1t^{n},c\otimes \varepsilon_3t^{l})
 =(b\otimes \varepsilon_1t^{n},a\otimes \varepsilon_2t^{m},c\otimes \varepsilon_3t^{l}),
 \\&(a\otimes \varepsilon_3t^{m},b\otimes \varepsilon_1t^{n},c\otimes \varepsilon_2t^{l})
 =(b\otimes \varepsilon_1t^{n},a\otimes \varepsilon_3t^{m},c\otimes \varepsilon_2t^{l}),\\&
 (a\otimes \varepsilon_2t^{m},b\otimes \varepsilon_3t^{n},c\otimes \varepsilon_1t^{l})
 =(b\otimes \varepsilon_3t^{n},a\otimes \varepsilon_2t^{m},c\otimes \varepsilon_1t^{l}),\end{align*}
 we obtain that
 \begin{align*}
 &c\lhd (a\rhd b+a\lhd b)=(a\rhd c+c\lhd a)\lhd b,\\&
 (a\rhd b+a\lhd b+b\rhd a+b\lhd a)\rhd c=b\rhd (a\rhd c),\\&
 (a\rhd c+c\lhd a)\lhd b=a\rhd (c\lhd b),
 \end{align*}
 that is, $(P, \rhd,\lhd)$ is a Zinbiel-dendriform algebra.
The proof is completed.
\end{proof}
  
 \begin{pro} \label{Tb6}
Let $(P, \beta_{\rhd},\beta_{\lhd})$ be a Zinbiel-dendriform coalgebra and
$(Q,\vartheta)$ be a completed Leibniz coalgebra.
 Define a linear map $\delta: P\otimes Q\rightarrow(P\otimes Q)\otimes(P\otimes Q)$ by
\begin{small}\begin{align}
\delta(a\otimes x)
&=\beta_{\rhd}(a)\bullet \vartheta(x)-\tau\beta_{\lhd}(a)\bullet \tau\vartheta(x)
 \label{P2} \\
&=\sum_{(a)}\sum_{i,j,\alpha}\Big((a_{1}\otimes x_{1,i,\alpha})
\otimes(a_{2}\otimes x_{2,j,\alpha})-(a_{(2)}\otimes x_{2,j,\alpha})
\otimes(a_{(1)}\otimes x_{1,i,\alpha})\Big),  \nonumber
\end{align}\end{small}
for any $a,b\in P$ and $x,y\in Q$, where
 $\beta_{\rhd}(a)=\sum_{(a)}a_{1}\otimes a_{2},~\beta_{\lhd}(a)=\sum_{(a)}a_{(1)}\otimes a_{(2)}$
 and  $\vartheta(x)=\sum_{i,j,\alpha}x_{1,i,\alpha}\otimes x_{2,j,\alpha}$.
 Then $(P\otimes Q, \delta)$ is a pre-Lie coalgebra, which is called the \textbf{pre-Lie coalgebra}
 induced from $(P,\beta_{\rhd},\beta_{\lhd})$ by $(Q, \vartheta)$.
Moreover, if $(Q=\oplus_{i\in\mathbb{Z}}Q_{i}, \vartheta)$ is the completed Leibniz coalgebra given in
Example \ref{Lq0}, then $(P\otimes Q, \delta)$ is a completed pre-Lie coalgebra 
if and only if $(P, \beta_{\rhd},\beta_{\lhd})$ is a Zinbiel-dendriform coalgebra.
\end{pro}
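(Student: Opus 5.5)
The plan is to dualize the argument of Proposition \ref{Tb5}, following the pattern of Proposition \ref{Tb2}. For the first assertion, I expand each of the four terms
\[
(\delta\hat{\otimes} I)\delta(a\otimes x),\quad (I\hat{\otimes}\delta)\delta(a\otimes x),\quad (\hat{\tau}\hat{\otimes} I)(\delta\hat{\otimes} I)\delta(a\otimes x),\quad (\hat{\tau}\hat{\otimes} I)(I\hat{\otimes}\delta)\delta(a\otimes x)
\]
using Eq.~\eqref{P2}. Each term is a sum of four pieces of the form $X(a)\bullet Y(x)$. Here $X(a)$ is an iterated composite of $\beta_{\rhd},\beta_{\lhd}$ and flips applied to $a$, and $Y(x)$ is the matching composite of $\vartheta$ and $\hat{\tau}$, built from $(\vartheta\hat{\otimes}I)\vartheta$, $(I\hat{\otimes}\vartheta)\vartheta$, $(\vartheta\hat{\otimes}I)\hat{\tau}\vartheta$ and $(I\hat{\otimes}\vartheta)\hat{\tau}\vartheta$, up to permutations. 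This yields sixteen terms, in exact analogy with Eq.~\eqref{P2} in the proof of Proposition \ref{Tb2}.

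Next, I rewrite every $Y(x)$ in a fixed basis of independent expressions using the Leibniz coalgebra identities \eqref{Lc1}--\eqref{Lc4}. These are the coalgebra mirror of the relations among $x\circ(y\circ z)$, $(x\circ y)\circ z$, $(z\circ x)\circ y$, and so on used in Proposition \ref{Tb5}. I would take the six basis elements corresponding to $x\circ(y\circ z)$, $y\circ(x\circ z)$, $x\circ(z\circ y)$, $(z\circ x)\circ y$, $(z\circ y)\circ x$ and $y\circ(z\circ x)$. Collecting coefficients then gives six groups, analogous to $A(1),\dots,A(6)$. Each coefficient should be a combination of the $P$-side composites that vanishes by \eqref{Zd1} or \eqref{Zd2}: the groups dual to the $\rhd\rhd$ brackets of Proposition \ref{Tb5} use \eqref{Zd1}, and the groups containing $\lhd$ use the two equalities in \eqref{Zd2}. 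The same computation works verbatim in the completed setting because all maps are applied componentwise in $\hat{\otimes}$.

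For the converse with $Q$ as in Example \ref{Lq0}, I assume $(P\otimes Q,\delta)$ is a completed pre-Lie coalgebra. I apply the pre-Lie coalgebra identity \eqref{Pl1} to suitable elements $a\otimes\varepsilon_k t^n$ and compare coefficients of carefully chosen homogeneous basis tensors $\varepsilon_{i}t^{p}\otimes\varepsilon_{j}t^{q}\otimes\varepsilon_{l}t^{s}$ in the three $Q$-slots. The choice is dictated by dualizing the three triples $(\varepsilon_2,\varepsilon_1,\varepsilon_3)$, $(\varepsilon_3,\varepsilon_1,\varepsilon_2)$ and $(\varepsilon_2,\varepsilon_3,\varepsilon_1)$ used in Proposition \ref{Tb5}. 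Concretely, I would take $x=\varepsilon_2t^n$ or $\varepsilon_1t^n$, expand $\vartheta$ twice using the explicit formulas, and pick output tensors such as $\varepsilon_1\otimes\varepsilon_4\otimes\varepsilon_3$ and its permutations. These are chosen so that only one or two $Q$-composites contribute. Isolating each such coefficient should return exactly one of the equalities in \eqref{Zd1}--\eqref{Zd2}.

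The main obstacle is this converse step. The explicit $\vartheta$ has several terms, for example $\vartheta(\varepsilon_2t^n)$ contains $\varepsilon_1\otimes\varepsilon_3$, $2\varepsilon_1\otimes\varepsilon_4$ and $-\varepsilon_4\otimes\varepsilon_1$, so a careless choice of output tensor mixes several $P$-identities. I would first tabulate all nonzero $(\vartheta\hat{\otimes}I)\vartheta$ and $(I\hat{\otimes}\vartheta)\vartheta$ values on $\varepsilon_1,\varepsilon_2,\varepsilon_3$. From this table I would choose tensors whose coefficient is supported on a single composite. Where that is impossible, I would combine two coefficient equations and subtract the identity already obtained.
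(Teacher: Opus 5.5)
Your proposal is correct and follows essentially the same route as the paper. You expand the four double coproducts, use Eqs.~\eqref{Lc1}--\eqref{Lc4} to regroup the $Q$-factors into six independent composites whose $P$-coefficients vanish by Eqs.~\eqref{Zd1}--\eqref{Zd2}, and obtain the converse by comparing coefficients for the coalgebra of Example~\ref{Lq0}. For the converse, $a\otimes\varepsilon_2t^n$ alone suffices (the paper uses $\varepsilon_3t^n$ and $\varepsilon_2t^n$): the coefficients of the six permutations of $\varepsilon_1\otimes\varepsilon_4\otimes\varepsilon_3$ each isolate a single required identity. By contrast, $\varepsilon_1t^n$ yields only mixed combinations that do not determine them, so drop that alternative.
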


\begin{proof}
Replacing $(A, \Delta_{\succ},\Delta_{\prec})$ and $(B,\eta)$ by
 $(P, \beta_{\rhd},\beta_{\lhd})$ and $(Q,\vartheta)$ respectively
 in the proof of Proposition \ref{Tb2}, combining Eqs.~\eqref{Lc1}-\eqref{Lc4}, we obtain that
{\small
	\begin{flalign*}
		&(\delta\hat{\otimes} I)\delta(a\otimes x)-(I\hat{\otimes} \delta)\delta(a\otimes x)
-(\hat{\tau}\hat{\otimes} I)(\delta\hat{\otimes} I)\delta(a\otimes x)
+(\hat{\tau}\hat{\otimes} I)(I\hat{\otimes} \delta)\delta(a\otimes x)
\\=&P(1)+P(2)+P(3)+P(4)+P(5)+P(6),
\end{flalign*}}
where
{\small
	\begin{flalign*}
		&P(1)=[(\tau\otimes I)(\beta_{\lhd}\otimes I)\tau\beta_{\lhd}(a)+(\tau\otimes I)(\beta_{\rhd}\otimes I)\tau\beta_{\lhd}(a)
-(\tau\otimes I)(I\otimes \tau)(I\otimes \beta_{\lhd})\beta_{\rhd}(a)]
\bullet (\hat{\tau} \hat{\otimes} I)(\vartheta\hat{\otimes} I)\hat{\tau}\vartheta (x),\\&
P(2)=[(I\otimes \tau)(I\otimes \beta_{\lhd})\tau\beta_{\lhd}(a)
-(\tau\otimes I)(I\otimes \tau)(I\otimes \beta_{\lhd})\beta_{\rhd}(a)
+(\tau\otimes I)(I\otimes \tau)(\beta_{\rhd}\otimes I)\beta_{\lhd}(a)]\bullet(I\hat{\otimes} \vartheta)\hat{\tau}\vartheta (x),
\\&P(3)=[(I\otimes \tau)(I\otimes \beta_{\lhd})\beta_{\rhd}(a)-
(I\otimes \tau)(\beta_{\rhd}\otimes I)\beta_{\lhd}(a)-
(\tau\otimes I)(I\otimes \tau)(I\otimes \beta_{\lhd})\tau\beta_{\lhd}(a)]
\bullet(I\hat{\otimes} \hat{\tau})(\vartheta\hat{\otimes}I )\vartheta (x),
\\&P(4)=[(I\otimes \tau)(I\otimes \beta_{\lhd})\beta_{\rhd}(a)
-(\beta_{\lhd}\otimes I)\tau\beta_{\lhd}(a)-(\beta_{\rhd}\otimes I)\tau \beta_{\lhd}(a)]
\bullet (\vartheta \hat{\otimes} I)\hat{\tau}\vartheta (x),
\\&P(5)=[(\beta_{\rhd}+\beta_{\lhd})\otimes I)\beta_{\rhd}(a)-(\tau \otimes I)
((\beta_{\rhd}+\beta_{\lhd})\otimes I)\beta_{\rhd}(a)]\bullet (\vartheta\hat{\otimes} I)\vartheta (x),
\\&P(6)=[(I\otimes \beta_{\rhd})\beta_{\rhd}(a)-(\tau\otimes I)(I\otimes \beta_{\rhd})\beta_{\rhd}(a)]
\bullet(\hat{\tau}\hat{\otimes} I)(I\hat{\otimes} \vartheta)\vartheta (x),
\end{flalign*}}
Then, 
P(1)=P(2)=P(3)=P(4)=0 follows from Eq.~\eqref{Zd2}, P(5)= P(6)=0 follows from Eq.~\eqref{Zd1}.

On the other hand, suppose that $(P\otimes Q, \delta)$ is a completed pre-Lie coalgebra with 
$(Q=\oplus_{i\in\mathbb{Z}}Q_{i}, \vartheta)$ being the completed Leibniz coalgebra given in
Example \ref{Lq0}.
Comparing the coefficients of
 $\varepsilon_3t^m\otimes \varepsilon_4t^l\otimes \varepsilon_4t^{n-m-l}$ and $\varepsilon_4t^m\otimes \varepsilon_4t^l\otimes \varepsilon_3t^{n-m-l}$
  respectively in the expansion
 \begin{align*}(\delta\hat{\otimes} I)\delta(a\otimes \varepsilon_3t^n)-(I\hat{\otimes} \delta)\delta(a\otimes \varepsilon_3t^n)
 -(\hat{\tau}\hat{\otimes} I)(\delta\hat{\otimes} I)\delta(a\otimes \varepsilon_3t^n)
+(\hat{\tau}\hat{\otimes} I)(I\hat{\otimes} \delta)\delta(a\otimes \varepsilon_3t^n)=0,\end{align*}
which implies that 
\begin{align*}&(I\otimes (\beta_{\rhd}+\beta_{\lhd}))\beta_{\lhd}(a)
=(\tau \otimes I)(I\otimes \beta_{\lhd})\beta_{\rhd}(a),\\&
( I\otimes \beta_{\rhd})\beta_{\rhd}=(\tau \otimes I)(I\otimes\beta_{\rhd})\beta_{\rhd}.\end{align*}
Comparing the coefficients of
 $\varepsilon_3t^m\otimes \varepsilon_4t^l\otimes \varepsilon_1t^{n-m-l}$ and
  $\varepsilon_1t^m\otimes \varepsilon_4t^l\otimes \varepsilon_3t^{n-m-l}$
  respectively in the expansion
 \begin{align*}(\delta\hat{\otimes} I)\delta(a\otimes \varepsilon_2t^n)-(I\hat{\otimes} \delta)\delta(a\otimes \varepsilon_2t^n)
 -(\hat{\tau}\hat{\otimes} I)(\delta\hat{\otimes} I)\delta(a\otimes \varepsilon_2t^n)
+(\hat{\tau}\hat{\otimes} I)(I\hat{\otimes} \delta)\delta(a\otimes \varepsilon_2t^n)=0,\end{align*}
which implies that 
\begin{align*}&(\beta_{\lhd}\otimes I)\beta_{\lhd}+(\tau \otimes I)(\beta_{\rhd}\otimes I)\beta_{\lhd}
=(\tau \otimes I)(I\otimes \beta_{\lhd})\beta_{\rhd},\\&
( I\otimes \beta_{\rhd})\beta_{\rhd}=((\beta_{\rhd}+\tau\beta_{\rhd}+\beta_{\lhd}.\end{align*}
Thus, Eqs.~\eqref{Zd1}-\eqref{Zd2} hold, that is, $(P, \beta_{\rhd},\beta_{\lhd})$ is a Zinbiel-dendriform coalgebra.

The proof is completed.
\end{proof}

\begin{rmk} 
The results of Propositions~\ref{Tb5} and~\ref{Tb6} remain valid if 
we replace Leibniz (co)algebras with Leibniz (co)dialgebras. 
However, since the latter are more involved and the former are precisely the commutative counterparts,
 we restrict our treatment to the (co)algebra case for simplicity.
\end{rmk}

\subsection{Pre-Lie bialgebras induced from Zinbiel-dendriform bialgebras and quadratic Leibniz algebras}

Having obtained pre-Lie (co)algebras on the tensor product of Zinbiel-dendriform 
(co)algebras and Leibniz (co)algebras, we are now led to consider how pre-Lie bialgebras can be derived from these structures.

\begin{pro} \label{Sp3}
Let $(P, \rhd,\lhd,\mathfrak{B})$ be a quadratic Zinbiel-dendriform algebra, $(Q, \circ)$ be 
a Leibniz algebra with a non-degenerate bilinear form $\varpi$, and $(P\otimes Q, \cdot)$ be the induced pre-Lie algebra from
 $(P, \rhd,\lhd,\mathfrak{B})$ and $(Q, \circ)$. Then $(P\otimes Q, \cdot, \omega)$ is a quadratic pre-Lie algebra with $\omega$ defined by
\begin{align*}
\omega(a\otimes x,b\otimes y)=\mathfrak{B}(a,b)\varpi(x, y),\;\;\;a, b\in P,\;\;x, y\in Q,
\end{align*}
if and only if $\varpi$ is skew-symmetric and the following equality holds for all $a,b,c\in P$ and $x,y,z\in Q$:
\begin{align*}
\mathfrak{B}( b,a\ast c)\Big(\varpi(x\circ y,z)+\varpi(y,x\circ z)\Big)
+\mathfrak{B}(b,c\ast a)\Big(\varpi(x\diamond y, z)+\varpi(y\circ x,z)-\varpi(y,z\circ x)\Big)=0.
\end{align*}
\end{pro}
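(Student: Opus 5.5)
The plan follows the proof of Proposition \ref{Sp2}, with the roles of the factors interchanged. First, skew-symmetry: $\omega$ is skew-symmetric exactly when $\mathfrak{B}(a,b)\varpi(x,y)=-\mathfrak{B}(b,a)\varpi(y,x)$ for all arguments. Since $\mathfrak{B}$ is symmetric and nondegenerate, we may pick $a,b$ with $\mathfrak{B}(a,b)\neq 0$, and then this is equivalent to $\varpi$ being skew-symmetric. Nondegeneracy of $\omega$ follows from that of $\mathfrak{B}$ and $\varpi$. Hence the only remaining condition is Eq.~\eqref{Sp}, which reads
\[
\omega\big((a\otimes x)\cdot(b\otimes y),c\otimes z\big)+\omega\big(b\otimes y,[a\otimes x,c\otimes z]\big)=0 .
\]

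Next, expand the left-hand side using Eq.~\eqref{Pd2}. We have $(a\otimes x)\cdot(b\otimes y)=a\rhd b\otimes x\circ y-b\lhd a\otimes y\circ x$. The commutator $[a\otimes x,c\otimes z]$ contributes the four terms $a\rhd c\otimes x\circ z$, $-c\lhd a\otimes z\circ x$, $-c\rhd a\otimes z\circ x$ and $a\lhd c\otimes x\circ z$. This gives six terms of the form $\mathfrak{B}(\cdot,\cdot)\varpi(\cdot,\cdot)$.

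Then move every $P$-factor into one of the two shapes $\mathfrak{B}(b,a\ast c)$ or $\mathfrak{B}(b,c\ast a)$ using the invariance relations \eqref{Cq1}--\eqref{Cq2} and the symmetry of $\mathfrak{B}$:
\begin{align*}
\mathfrak{B}(a\rhd b,c)&=\mathfrak{B}(b,a\ast c)+\mathfrak{B}(b,c\ast a),\\
\mathfrak{B}(b\lhd a,c)&=-\mathfrak{B}(a,c\rhd b+b\lhd c).
\end{align*}
The second relation has $a$, not $b$, in the first slot, so it must be rewritten further. The natural first attempt is to compare the two invariance identities through symmetry of $\mathfrak{B}$, which should yield the derived identity $\mathfrak{B}(b\lhd a,c)=-\mathfrak{B}(b,c\rhd a+a\lhd c)$. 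The remaining terms $\mathfrak{B}(b,a\rhd c)$, $\mathfrak{B}(b,a\lhd c)$, $\mathfrak{B}(b,c\rhd a)$ and $\mathfrak{B}(b,c\lhd a)$ then combine directly into $\mathfrak{B}(b,a\ast c)$ and $\mathfrak{B}(b,c\ast a)$.

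Collecting coefficients should produce the stated combination:
\[
\mathfrak{B}(b,a\ast c)\big(\varpi(x\circ y,z)+\varpi(y,x\circ z)\big)+\mathfrak{B}(b,c\ast a)\big(\varpi(x\circ y,z)+\varpi(y\circ x,z)-\varpi(y,z\circ x)\big),
\]
where the $\varpi(x\diamond y,z)$ in the statement is read as $\varpi(x\circ y,z)$. So Eq.~\eqref{Sp} holds for all arguments if and only if the stated equality holds, which proves both directions.

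The main obstacle is the bookkeeping in the $\lhd$-terms. I expect to have to derive the auxiliary identity for $\mathfrak{B}(b\lhd a,c)$ from \eqref{Cq1}--\eqref{Cq2}, possibly also using the Zinbiel-dendriform axioms, and then to track signs carefully so that the two coefficient groups match the stated ones exactly.
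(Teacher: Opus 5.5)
Your overall route is the paper's: reduce to Eq.~\eqref{Sp}, expand with Eq.~\eqref{Pd2} into six terms, and rewrite everything in the shapes $\mathfrak{B}(b,a\ast c)$ and $\mathfrak{B}(b,c\ast a)$. The gap is exactly at the step you flagged. Your auxiliary identity $\mathfrak{B}(b\lhd a,c)=-\mathfrak{B}(b,c\rhd a+a\lhd c)$ is just \eqref{Cq2} with the two arguments of $\lhd$ interchanged, and it is not correct. The true identity is $\mathfrak{B}(b\lhd a,c)=-\mathfrak{B}(b,c\ast a)=-\mathfrak{B}(b,c\rhd a+c\lhd a)$. Your version differs from it by $\mathfrak{B}(b,c\lhd a-a\lhd c)$, so by nondegeneracy it would hold only if $\lhd$ were commutative. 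Used as written, it turns $-\mathfrak{B}(b\lhd a,c)\varpi(y\circ x,z)$ into $\mathfrak{B}(b,c\rhd a+a\lhd c)\varpi(y\circ x,z)$. That is not of the form $\mathfrak{B}(b,c\ast a)\varpi(y\circ x,z)$, so your collecting step does not produce the stated combination.

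The missing derivation needs only \eqref{Cq1}, \eqref{Cq2} and the symmetry of $\mathfrak{B}$; no Zinbiel-dendriform axioms are required. Write \eqref{Cq1} for $(x,y,z)$ and for $(x,z,y)$ and add the two. By symmetry, $\mathfrak{B}(x\rhd y,z)$ and $\mathfrak{B}(x\rhd z,y)$ cancel against the terms $\mathfrak{B}(z,x\rhd y)$ and $\mathfrak{B}(y,x\rhd z)$ on the right, which leaves
\[
0=\big[\mathfrak{B}(y,x\lhd z)+\mathfrak{B}(y,z\rhd x)+\mathfrak{B}(x\lhd y,z)\big]+\mathfrak{B}(y,z\lhd x)+\mathfrak{B}(z,y\ast x).
\]
The bracket vanishes by \eqref{Cq2}. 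Hence $\mathfrak{B}(z\lhd x,y)=-\mathfrak{B}(z,y\ast x)$, i.e.\ $\mathfrak{B}(b\lhd a,c)=-\mathfrak{B}(b,c\ast a)$; the paper uses this identity without comment. With it, the six terms become
\[
\mathfrak{B}(b,a\ast c+c\ast a)\varpi(x\circ y,z)+\mathfrak{B}(b,c\ast a)\varpi(y\circ x,z)+\mathfrak{B}(b,a\ast c)\varpi(y,x\circ z)-\mathfrak{B}(b,c\ast a)\varpi(y,z\circ x),
\]
which is the stated expression with $\diamond$ read as $\circ$. The rest of your argument is fine and more complete than the paper's: skew-symmetry of $\omega$ is equivalent to that of $\varpi$, nondegeneracy carries over, and trilinearity reduces \eqref{Sp} to pure tensors.
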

\begin{proof} The skew-symmetry of $\varpi$, together with Eqs.~\eqref{Cq1}-\eqref{Cq2}, 
implies that for all $a, b,c\in P$ and $x, y,z\in Q$,
\begin{align*}
	&\omega((a\otimes x)\cdot (b\otimes y),c\otimes z)+\omega(b\otimes y,[a\otimes x,c\otimes z ])
\\=&\mathfrak{B}(a\rhd b,c)\varpi(x\circ y,z)-\mathfrak{B}(b\lhd a,c)\varpi(y\circ x,z)
	+\mathfrak{B}(b,a\rhd c)\varpi(y,x\circ z)\\&-\mathfrak{B}(b,c\lhd a)\varpi(y,z\circ x)
	-\mathfrak{B}(b,c\rhd a)\varpi(y,z\circ x)+\mathfrak{B}(b,a\lhd c)\varpi(y,x\circ z)
\\=&\mathfrak{B}( b,a\ast c+c\ast a)\varpi(x\circ y,z)+\mathfrak{B}(b,c\ast a)\varpi(y\circ x,z)
\\&+\mathfrak{B}(b,a\ast c)\varpi(y,x\circ z)-\mathfrak{B}(b,c\ast a)\varpi(y,z\circ x)
\\=&\mathfrak{B}( b,a\ast c)\Big(\varpi(x\circ y,z)+\varpi(y,x\circ z)\Big)
+\mathfrak{B}(b,c\ast a)\Big(\varpi(x\circ y, z)+\varpi(y\circ x,z)-\varpi(y,z\circ x)\Big),
\end{align*}
where $a\ast b=a\rhd b+a\lhd b$.
This finishes the proof.
\end{proof}
A bilinear form $\varpi$ on a Leibniz algebra $(Q, \circ)$ is called {\bf invariant} if 
\begin{equation*}
	\varpi(b_{1} \circ b_{2},\; b_{3}) 
= \varpi(b_{1},\; b_{2} \circ b_{3} + b_{3} \circ b_{2}), \;\; \forall ~b_{1}, b_{2}, b_{3}\in Q.
\end{equation*}
A quadratic Leibniz algebra \cite{TS} is a Leibniz algebra $(Q, \circ)$ endowed
with a nondegenerate skew-symmetric and invariant bilinear form $\varpi$. 
If $(Q, \circ, \varpi)$ is a quadratic Leibniz algebra, then
 \begin{equation*}\varpi(b_{1} \circ b_{2}, \; b_{3}) = -\varpi(b_{2}, \; b_{1} \circ b_{3}),~~
\forall~b_{1}, b_{2}, b_{3} \in Q.\end{equation*}
 
Proposition \ref{Sp3} implies that if the non-degenerate skew-symmetric bilinear form $\varpi$ satisfies the identity
 $\varpi(z\circ x,y)=\varpi(z,x\circ y)+\varpi(z,y\circ x)$, 
 which is equivalent to $(Q, \circ, \varpi)$ is a quadratic Leibniz algebra,
 then $(P\otimes Q, \cdot, \omega)$ is a quadratic pre-Lie algebra.
 Accordingly, we investigate the construction of (completed) pre-Lie bialgebras 
from Zinbiel-dendriform bialgebras and quadratic Leibniz algebras.
For the infinite-dimensional setting, however, we must additionally consider quadratic 
$\mathbb{Z}$-graded Leibniz algebras.

\begin{defi}\cite{HL}
A {\bf quadratic $\mathbb{Z}$-graded Leibniz algebra}, denoted by 
$(Q = \bigoplus_{i \in \mathbb{Z}} Q_i, \circ, \varpi)$, 
is a $\mathbb{Z}$-graded Leibniz algebra equipped with a nondegenerate skew-symmetric,
invariant and graded bilinear form $\varpi$. In the special case where $Q = Q_0$, 
it reduces to a quadratic Leibniz algebra.\end{defi}

Let $(Q, \circ,\varpi)$ be a quadratic Leibniz
algebra. Define a linear map \(\vartheta_{\varpi}:Q\to Q\otimes Q\) by
\[\hat{\varpi}(\vartheta_{\varpi}(b_1), b_2\otimes b_3) = -\omega(b_1, b_2\circ b_3)\]
for all $b_1,b_2,b_3\in Q$. Then $(Q,\vartheta_{\varpi})$ is a Leibniz coalgebra \cite{HL}.

Assume that $(Q=\oplus_{i\in\mathbb{Z}}Q_{i}, \circ,\varpi)$ is a quadratic $\mathbb{Z}$-graded Leibniz algebra.
From \cite{HL}, recall that
 \begin{small}
	\begin{align}&\label{Bd1}\vartheta_{\varpi}(x\circ y)=\Big(I\otimes R_{\circ}(y)-\hat{\tau}(I\otimes R_{\circ}(y))\Big)\vartheta_{\varpi}(x)+
\Big(\hat{\tau}(I\otimes R_{\circ}(x))-I\otimes R_{\circ}(x)\Big)\vartheta_{\varpi}(y)
-\hat{\tau}\vartheta_{\varpi}(y\circ x),
\\&\label{Bd2}(I\otimes L_{\circ}(y))\vartheta_{\varpi}(x)=-(I\otimes R_{\circ}(y))\vartheta_{\varpi}(x)+\hat{\tau}\Big((I\otimes R_{\circ}(y))\vartheta_{\varpi}(x)
-(I\otimes R_{\circ}(x))\vartheta_{\varpi}(y)-\vartheta_{\varpi}(x\circ y)\Big),
\\&\label{Bd3}(R_{\circ}(x)\otimes I)\vartheta_{\varpi}(y)=(R_{\circ}(y)\otimes I)\vartheta_{\varpi}(x)=0,\\&
\label{Bd4}(L_{\circ}(x)\otimes I)\vartheta_{\varpi}(y)=(I\otimes R_{\circ}(y))\vartheta_{\varpi}(x).	\end{align} 
\end{small}

\begin{ex}\label{Lq1}
Let 
\[ Q=W\otimes \mathbb K[t,t^{-1}]=\bigoplus_{n\in\mathbb Z}(W\otimes t^n)\]
be the infinite-dimensional $\mathbb Z$-graded Leibniz algebra given in Example \ref{Lq0}.
Define a bilinear form \(\varpi\) on $W$ by
\[
\nu(\varepsilon_1,\varepsilon_3)=1,\qquad \nu(\varepsilon_2,\varepsilon_4)=1,
\]
and extend skew-symmetrically. Then $(W,\circ,\nu)$ is a quadratic Leibniz algebra.
Moreover, give a bilinear form \(\varpi\) on $Q$ by
 \[\varpi(x\otimes t^m,y\otimes t^n)=\nu(x,y)\delta_{m+n,0}.\]
 Then $(Q,\circ,\varpi)$
is a quadratic $\mathbb Z$-graded Leibniz algebra. The Leibniz coalgebra from Example \ref{Lq0}
is precisely the one whose comultiplication is induced by \(\varpi\). Denote this 
 Leibniz coalgebra by $(Q,\vartheta_{\varpi})$.
\end{ex}

We are ready to give the affinization of Zinbiel-dendriform bialgebras.

\begin{thm}\label{Tb7}
Let $(P, \rhd,\lhd,\beta_{\rhd},\beta_{\lhd})$ be a Zinbie-dendriform bialgebra
and $(Q=\oplus_{i\in\mathbb{Z}}Q_{i}, \varpi)$ be a quadratic $\mathbb{Z}$-graded Leibniz algebra.
Define a binary operation $\cdot$ on $P\otimes Q$ by Eq.~\eqref{Pd2}, and
a linear map $\delta: P\otimes Q\rightarrow(P\otimes Q
\,\hat{\otimes}\,(P\otimes Q)$ by Eq.~\eqref{P2} with $\vartheta=\vartheta_{\varpi}$.
Then $(P\otimes Q,\cdot,\delta)$ is a completed pre-Lie bialgebra.
Moreover, if $(Q=\oplus_{i\in\mathbb{Z}}Q_{i}, \circ, \varpi)$ is the quadratic $\mathbb{Z}$-graded
Leibniz algebra given in Example \ref{Lq1}, then $(P\otimes Q,\cdot,\delta)$ is a completed pre-Lie bialgebra
if and only if $(P, \rhd,\lhd,\beta_{\rhd},\beta_{\lhd})$ is a Zinbie-dendriform bialgebra.
\end{thm}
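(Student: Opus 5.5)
The proof runs in parallel with that of Theorem~\ref{Tb3}. By Proposition~\ref{Tb5}, $(P\otimes Q,\cdot)$ is a pre-Lie algebra. By Proposition~\ref{Tb6}, $(P\otimes Q,\delta)$ is a completed pre-Lie coalgebra, since $(Q,\vartheta_{\varpi})$ is a completed Leibniz coalgebra \cite{HL}. So it only remains to verify the two compatibility conditions \eqref{Pb1} and \eqref{Pb2} for $x\otimes a$-type elements $a\otimes x,\ b\otimes y$.

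For \eqref{Pb1}, I would expand every term using \eqref{Pd2} and \eqref{P2}. Each term then has the form (element of $P\otimes P$)$\bullet$(element of $Q\hat\otimes Q$). Next I would use \eqref{Bd1}--\eqref{Bd4} to rewrite all $Q$-factors in a fixed set of independent shapes. Since $L_\circ\otimes I$ reduces to $I\otimes R_\circ$ by \eqref{Bd4}, and $R_\circ\otimes I$ terms vanish by \eqref{Bd3}, a natural spanning set is
$$\vartheta_{\varpi}(x\circ y),\quad \vartheta_{\varpi}(y\circ x),\quad (I\otimes R_\circ(y))\vartheta_{\varpi}(x),\quad (I\otimes R_\circ(x))\vartheta_{\varpi}(y),$$
together with their $\hat\tau$-images. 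The $P$-coefficients of these shapes should be exactly combinations of \eqref{Zd3}--\eqref{Zd7} and the derived identities \eqref{Zd8}--\eqref{Zd10}, exactly as the $B(i)$ in Theorem~\ref{Tb3} reduce to \eqref{B1}--\eqref{B6}. I would treat \eqref{Pb2} the same way, obtaining a second list of coefficients (analogues of $B(7)$--$B(12)$), each killed by the Zinbiel-dendriform bialgebra axioms. This proves the first assertion.

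For the converse with the specific $(Q,\circ,\varpi)$ of Example~\ref{Lq1}, the coalgebra and algebra axioms already follow from the converse parts of Propositions~\ref{Tb5} and~\ref{Tb6}. So it suffices to extract \eqref{Zd3}--\eqref{Zd7} from \eqref{Pb1} and \eqref{Pb2}. The plan is to evaluate \eqref{Pb1} and \eqref{Pb2} on chosen pairs $a\otimes\varepsilon_it^m,\ b\otimes\varepsilon_jt^n$ (for instance $(i,j)=(2,1),(3,1),(2,3),(3,2),(2,2)$). Then, using the explicit formulas for $\vartheta$ in Example~\ref{Lq0}, I would compare coefficients of suitable basis monomials $\varepsilon_kt^p\otimes\varepsilon_lt^{q}$ with $p+q=m+n$. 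Because $\vartheta(\varepsilon_4t^n)=0$ and the products involving $\varepsilon_4$ are sparse, particular monomials such as $\varepsilon_4\otimes\varepsilon_3$, $\varepsilon_1\otimes\varepsilon_4$ and $\varepsilon_4\otimes\varepsilon_1$ should each isolate a single identity among \eqref{Zd3}--\eqref{Zd7}. This is analogous to the coefficient comparisons in the proofs of Propositions~\ref{Tb5} and~\ref{Tb6}.

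The main obstacle is the bookkeeping in the forward direction. The reduction via \eqref{Bd1}--\eqref{Bd4} must produce a set of $Q$-shapes that are genuinely treated as independent, and the resulting $P$-coefficients must match the axioms exactly, possibly after adding the derived identities \eqref{Zd8}--\eqref{Zd10}. I would first organize the $\vartheta$-terms by which of $x$ and $y$ sits inside $\vartheta_\varpi$ and whether a $\hat\tau$ occurs. Then I would group the $P$-coefficients and compare each group to \eqref{Zd3}--\eqref{Zd7}, adding \eqref{Zd8}--\eqref{Zd10} where a symmetric combination appears. In the converse, the delicate point is choosing monomials whose coefficient involves only one axiom; if a monomial mixes two axioms, I would take a second monomial and solve the resulting linear system.
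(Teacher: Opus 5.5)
Your plan follows the paper's proof. You reduce to \eqref{Pb1}--\eqref{Pb2} via Propositions~\ref{Tb5} and~\ref{Tb6}, rewrite the $Q$-factors with \eqref{Bd1}--\eqref{Bd4}, kill the resulting $P$-coefficients with \eqref{Zd3}--\eqref{Zd10}, and obtain the converse by comparing coefficients in Example~\ref{Lq1}; you are more explicit than the paper in citing the converse parts of Propositions~\ref{Tb5} and~\ref{Tb6} for the algebra and coalgebra axioms. One bookkeeping correction: your eight shapes are not independent, since \eqref{Bd1} expresses $\vartheta_{\varpi}(x\circ y)$ (and, with $x,y$ swapped, $\vartheta_{\varpi}(y\circ x)$) through the others, so the set to work with is $\hat\tau\vartheta_{\varpi}(x\circ y)$, $\hat\tau\vartheta_{\varpi}(y\circ x)$, $(I\otimes R_\circ(y))\vartheta_{\varpi}(x)$, $(I\otimes R_\circ(x))\vartheta_{\varpi}(y)$ and the $\hat\tau$-images of the last two, which are exactly the six shapes carrying $Q(1)$--$Q(12)$ in the paper.
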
 

 \begin{proof} Replacing $(A, \Delta_{\succ},\Delta_{\prec})$ and $(B,\eta)$ by
 $(P, \beta_{\rhd},\beta_{\lhd})$ and $(Q,\vartheta)$ respectively
 in the proof of Theorem \ref{Tb3}, and combining Eqs.~\eqref{Bd1}-\eqref{Bd4}, we have 
\begin{small} \begin{align*}&
(\delta-\hat{\tau}\delta)\Big((a\otimes x)\cdot (b\otimes y)\Big)-\Big(L_{\cdot}(a\otimes x)\hat{\otimes} I\Big)(\delta
-\hat{\tau}\delta)(b\otimes y)-\Big(I\hat{\otimes} L_{\cdot}(a\otimes x)\Big)(\delta-\hat{\tau}\delta)(b\otimes y)
\\&-\Big(I\hat{\otimes} R_{\cdot}(b\otimes y)\Big)\delta(a\otimes x)
 -\Big( R_{\cdot}(b\otimes y)\hat{\otimes} I\Big)\hat{\tau}\delta(a\otimes x),
\\=&Q(1)+Q(2)+Q(3)+Q(4)+Q(5)+Q(6),
\end{align*}\end{small}
and
\begin{small}\begin{align*}&
\delta\Big((a\otimes x)\cdot (b\otimes y)-(b\otimes y)\cdot (a\otimes x)\Big)-\Big(I\otimes (R_{\cdot}(b\otimes y)
-L_{\cdot}(b\otimes y))\Big)\delta(a\otimes x)
\\&-\Big(I \otimes(L_{\cdot}-R_{\cdot})(a\otimes x))\Big)\delta(b\otimes y)
-\Big(L_{\cdot}(a\otimes x)\otimes I\Big)\delta(b\otimes y)+\Big(L_{\circ}(b\otimes y)\otimes I\Big)\delta(a\otimes x)
\\
=&Q(7)+Q(8)+Q(9)+Q(10)+Q(11)+Q(12),\end{align*}\end{small}
 where
\begin{small} \begin{align*}Q(1)=&\Big(
(L_{\rhd}(a)\otimes I)\tau\beta(b)-I\otimes L_{\lhd}(b))\beta_{\rhd}(a)-\tau\beta(a\rhd b)+\beta(b\lhd a)\Big)
\bullet \hat{\tau}\vartheta_{\varpi} (x\circ y),
\\Q(2)=&\Big((I\otimes L_{\rhd}(a))\beta(b)-( L_{\lhd}(b)\otimes I)\tau\beta_{\rhd}(a)
-\beta(a\rhd b)+\tau\beta(b\lhd a)\Big)\bullet \hat{\tau}\vartheta_{\varpi} (y\circ x),
\\Q(3)=&\Big(\beta(a\rhd b+b\lhd a)-(L_{\rhd}(a)\otimes I)\beta(b)-
 (I\otimes \big(R_{\rhd}(b)+L_{\lhd}(b))\big)\beta_{\rhd}(a)\Big)\bullet (I\otimes R_{\circ}(y))\vartheta_{\varpi}(x),
\\Q(4)=&\Big((I\otimes (L_{\rhd}+R_{\lhd})(a) )\beta(b)+(L_{\lhd})(b)\otimes I)\beta_{\lhd}(a)
 -\beta(a\rhd b+b\lhd a)\Big)\bullet (I\otimes R_{\circ}(x))\vartheta_{\varpi}(y),
\\Q(5)=&\Big(
(I\otimes L_{\rhd}(a)-L_{\rhd}(a)\otimes I)\tau\beta(b)+(I\otimes L_{\rhd}(a))\beta(b)
+(I\otimes L_{\lhd}(b))\beta_{\rhd}(a)\\&+
(R_{\rhd}(b)\otimes I)\tau\beta_{\rhd}(a)
-\beta(a\rhd b+b\lhd a)\Big)\bullet \hat{\tau}(I\otimes R_{\circ}(y))\vartheta_{\varpi} (x),
\\Q(6)=&\Big((L_{\lhd})(b)\otimes  I)\tau\beta_{\rhd}(a)+\beta(a\rhd b+b\lhd a)
 -(R_{\lhd}(a)\otimes I)\tau\beta(b) \\&-(I\otimes L_{\rhd}(a) )\beta(b)-(I\otimes L_{\lhd}(b) )\big(\beta_{\rhd}(a)+\tau\beta_{\lhd}(a)\big)
\Big)\bullet \hat{\tau}(I\otimes R_{\circ}(x))\vartheta_{\varpi} (y),
\\Q(7)=&\Big(\beta_{\rhd}(b\ast a)-\tau\beta_{\lhd}(a\ast b)+
(L_{\rhd}(a)\otimes I)\tau\beta_{\lhd}(b)-(I\otimes L_{\ast}(b))\beta_{\rhd}(a)\Big)
\bullet \hat{\tau}\vartheta_{\varpi} (x\circ y),
\\Q(8)=&\Big((I\otimes L_{\ast}(a))\beta_{\rhd}(b)-\beta_{\rhd}(a\ast b)+\tau\beta_{\lhd}(b\ast a)-
(L_{\rhd}(b)\otimes I)\tau\beta_{\lhd}(a)\Big)\bullet \hat{\tau}\vartheta_{\varpi} (y\circ x),
\\Q(9)=&\Big(\beta_{\rhd}(a\ast b+b\ast a)-\big(I\otimes (L_{\ast}+R_{\ast})(b)\big)\beta_{\rhd}(a)-
 (L_{\rhd}(a)\otimes I)\beta_{\rhd}(b)
 \Big)\bullet (I\otimes R_{\circ}(y))\vartheta_{\varpi}(x),
\\Q(10)=&\Big((I\otimes (R_{\ast}+L_{\ast})(a) )\beta_{\rhd}(b)+(L_{\rhd}(b)\otimes I)\beta_{\rhd}(a)
 -\beta_{\rhd}(a\ast b+b\ast a)\Big)\bullet (I\otimes R_{\circ}(x))\vartheta_{\varpi}(y),
\\Q(11)=&\Big(\beta_{\rhd}(a\ast b+b\ast a)-(I\otimes L_{\ast}(b))(\beta_{\rhd}(a)+\tau\beta_{\lhd}(a))
-(I\otimes L_{\ast}(a))\beta_{\rhd}(b)\\&-(R_{\lhd}(a)\otimes I)\tau\beta_{\lhd}(b)+
(L_{\rhd}(b)\otimes I)\tau\beta_{\lhd}(a)
\Big)\bullet \hat{\tau}(I\otimes R_{\circ}(x))\vartheta_{\varpi} (y),
\\Q(12)=&\Big((I\otimes L_{\ast}(b))\beta_{\rhd}(a)+(I\otimes L_{\ast}(b))
 (\beta_{\rhd}(b)+\tau\beta_{\lhd}(b))-(L_{\rhd}(a)\otimes I)\tau\beta_{\lhd}(b)
 \\&+ (R_{\lhd}(b)\otimes  I)\tau\beta_{\lhd}(a)-\beta_{\rhd}(a\ast b+b\ast a)
\Big)\bullet \hat{\tau}(I\otimes R_{\circ}(y))\vartheta_{\varpi} (x).
\end{align*}
\end{small}
Then
Q(1)=Q(2)=0 follows from Eqs.~\eqref{Zd4} and \eqref{Zd6}, Q(3)=Q(4)=0 follows from Eq.~\eqref{Zd4},
Q(5)=Q(6)=0 follows from Eqs.~\eqref{Zd4} and \eqref{Zd9}. Likewise,
Q(7)=Q(8)=0 is implied by Eqs.~\eqref{Zd3}, \eqref{Zd5} and \eqref{Zd7}, while
Q(9)=Q(10)=0 follows from Eq.~\eqref{Zd10} and
Q(11)=Q(12)=0 follows from Eqs.~\eqref{Zd3}, \eqref{Zd5} and \eqref{Zd10}.
Hence, Eqs.~\eqref{Pb1}-\eqref{Pb2} hold.  

Conversely, assume that $(P\otimes Q,\cdot,\delta)$ is a completed pre-Lie bialgebra
with $(Q=\oplus_{i\in\mathbb{Z}}Q_{i}, \circ, \varpi)$ being the quadratic $\mathbb{Z}$-graded
Leibniz algebra given in Example \ref{Lq1}.
Comparing the coefficients of $\varepsilon_{3}t^{n+m}\otimes \varepsilon_{1}t^{n-m}$ and
$\varepsilon_{1}t^{n+m}\otimes \varepsilon_{4}t^{n-m}$ respectively
 in the expansion
 \begin{align*}&(\delta-\hat{\tau}\delta)((a\otimes \varepsilon_{2}t^{n})\circ (b\otimes \varepsilon_{2}t^{n}))
 -(L_{\cdot}(a\otimes \varepsilon_{2}t^{n})\hat{\otimes} I)(\delta
-\hat{\tau}\delta)(b\otimes \varepsilon_{2}t^{n})
-(I\hat{\otimes} L_{\cdot}(a\otimes \varepsilon_{2}t^{n}))(\delta-\hat{\tau}\delta)(b\otimes \varepsilon_{2}t^{n})
\\&-(I\hat{\otimes} R_{\cdot}(b\otimes \varepsilon_{2}t^{n}))\delta(a\otimes \varepsilon_{2}t^{n})
 -( R_{\cdot}(b\otimes \varepsilon_{2}t^{n})\hat{\otimes} I)\hat{\tau}\delta(a\otimes \varepsilon_{2}t^{n})=0
,\end{align*}
which indicates that Eqs.~\eqref{Zd4} and \eqref{Zd6} hold.
Comparing the coefficients of $\varepsilon_{1}t^{m}\otimes \varepsilon_{4}t^{2n-m}$ and
$\varepsilon_{4}t^{m}\otimes \varepsilon_{1}t^{2n-m}$ respectively
 in the expansion
\begin{small} \begin{align*}&
\delta\Big((a\otimes \varepsilon_{2}t^{n})\cdot (b\otimes \varepsilon_{1}t^{n})-(b\otimes \varepsilon_{1}t^{n})\cdot (a\otimes \varepsilon_{2}t^{n})\Big)-\Big((I\otimes (R_{\cdot}-L_{\cdot})(b\otimes \varepsilon_{1}t^{n}))\Big)\delta(a\otimes \varepsilon_{2}t^{n})
\\&-\Big(I \otimes(L_{\cdot}-R_{\cdot})(a\otimes \varepsilon_{2}t^{n})\Big)\delta(b\otimes \varepsilon_{1}t^{n})
-\Big(L_{\cdot}(a\otimes \varepsilon_{2}t^{n})\otimes I\Big)\delta(b\otimes \varepsilon_{1}t^{n})+\Big(L_{\circ}(b\otimes \varepsilon_{1}t^{n})\otimes I\Big)\delta(a\otimes \varepsilon_{2}t^{n})=0,\end{align*}
\end{small}
which indicates that Eqs.~\eqref{Zd5} and \eqref{Zd7} hold.
Comparing the coefficients of $\varepsilon_{3}t^{m}\otimes \varepsilon_{4}t^{2n-m}$ and
$\varepsilon_{4}t^{m}\otimes \varepsilon_{3}t^{2n-m}$ respectively
 in the expansion
\begin{small} \begin{align*}&
\delta\Big((a\otimes \varepsilon_{2}t^{n})\cdot (b\otimes \varepsilon_{3}t^{n})-(b\otimes \varepsilon_{3}t^{n})\cdot (a\otimes \varepsilon_{2}t^{n})\Big)-\Big((I\otimes (R_{\cdot}-L_{\cdot})(b\otimes \varepsilon_{3}t^{n}))\Big)\delta(a\otimes \varepsilon_{2}t^{n})
\\&-\Big(I \otimes(L_{\cdot}-R_{\cdot})(a\otimes \varepsilon_{2}t^{n})\Big)\delta(b\otimes \varepsilon_{3}t^{n})
-\Big(L_{\cdot}(a\otimes \varepsilon_{2}t^{n})\otimes I\Big)\delta(b\otimes \varepsilon_{3}t^{n})+\Big(L_{\circ}(b\otimes \varepsilon_{3}t^{n})\otimes I\Big)\delta(a\otimes \varepsilon_{2}t^{n})=0,\end{align*}
\end{small}
which indicates that Eqs.~\eqref{Zd3} and \eqref{Zd7} hold.
Thus, $(P, \rhd,\lhd,\beta_{\rhd},\beta_{\lhd})$ is a Zinbie-dendriform bialgebra.
This finishes the proof.
 \end{proof}
 
Infinite-dimensional pre-Lie bialgebras can be obtained via the affinization of
Zinbiel-dendriform bialgebras. It is therefore natural to ask whether the solutions
 of the ZD-YBE with invariant skew-symmetric parts
induce solutions of the $S$-equation with invariant symmetric parts in the resulting pre-Lie algebra. 
 This is precisely the question we address in what follows.
  
 \begin{thm} \label{Tb8}
Let $(P,\rhd,\lhd)$ be a Zinbiel-dendriform algebra and $(Q=\oplus_{i\in\mathbb{Z}}Q_{i}, \circ,
\varpi)$ be a quadratic $\mathbb{Z}$-graded Leibniz algebra. Assume that $(P\otimes Q, \cdot)$ is the
induced $\mathbb{Z}$-graded pre-Lie algebra from $(P,\rhd,\lhd)$  
by $(Q=\oplus_{i\in\mathbb{Z}}Q_{i}, \circ,\varpi)$.
 Suppose that $r=\sum_{i}u_{i}
\otimes v_{i}\in P\otimes P$ is a solution of the ZD-YBE in  $(P,\rhd,\lhd)$ with $r+\tau(r)$ invariant in $P\otimes P$. Then
\begin{align}\label{Cr2} 
\hat{r}=\sum_{i}\sum_{j\in\Omega}(u_{i}\otimes e_{j})\otimes(v_{i}\otimes f_{j})
\in(P\otimes Q)\,\hat{\otimes}\,(P\otimes Q)  
\end{align}
is a completed solution of the $S$-equation in $(P\otimes Q, \cdot)$ with $\hat{r}-\hat{\tau} \hat{r}$
 invariant in $(P\otimes Q)\hat{\otimes}(P\otimes Q)$. In particular, if 
 $r=\sum_{i}u_{i}
\otimes v_{i}\in P\otimes P$ is a skew-symmetric solution of the ZD-YBE in $(P,\rhd,\lhd)$, then
$\hat{r}=\sum_{i}\sum_{j\in\Omega}(u_{i}\otimes e_{j})\otimes(v_{i}\otimes f_{j})
\in(P\otimes Q)\,\hat{\otimes}\,(P\otimes Q) $
is a symmetric completed solution of the $S$-equation in $(P\otimes Q, \cdot)$,
where $\{e_{j}\}_{j\in\Omega}$ is a homogeneous basis of $Q=\oplus_{i\in\mathbb{Z}}Q_{i}$
and $\{f_{j}\}_{j\in\Omega}$ is its homogeneous dual basis with respect to $\omega$. 

Furthermore, if $(Q=\oplus_{i\in\mathbb{Z}}Q_{i}, \circ,
\varpi)$ is the quadratic $\mathbb{Z}$-graded Leibniz algebra given in Example~\ref{Lq1}, then 
    \begin{align*}
       \hat{r} =& \sum_{m, n \in \mathbb{Z}} \sum_{i} (u_{i} \otimes \varepsilon_{1} t^{m}) \otimes (v_{i} \otimes \varepsilon_{3} t^{-m} )
       +(u_{i} \otimes \varepsilon_{2} t^{m}) \otimes (v_{i} \otimes \varepsilon_{4} t^{-m} )\\&
       -(u_{i} \otimes \varepsilon_{3} t^{m}) \otimes (v_{i} \otimes \varepsilon_{1} t^{-m} )
       -(u_{i} \otimes \varepsilon_{4} t^{m}) \otimes (v_{i} \otimes \varepsilon_{2} t^{-m} )
   \end{align*}
    is a completed solution of the $S$-equation in $(P\otimes Q, \cdot)$ with $\hat{r}-\hat{\tau} \hat{r}$
 invariant in $(P\otimes Q)\hat{\otimes}(P\otimes Q)$
     if and only if $r+\tau(r)$ is invariant in $P\otimes P$ and 
    $r=\sum_{i}u_{i}
\otimes v_{i}\in P\otimes P$ is
     a solution of the ZD-YBE in $(P,\rhd,\lhd)$. In particular,
$\hat{r}$ is a symmetric completed solution of the $S$-equation in $(P\otimes Q, \cdot)$ if and only if $r=\sum_{i}u_{i}
\otimes v_{i}\in P\otimes P$ is a skew-symmetric solution of the ZD-YBE in $(P,\rhd,\lhd)$.
\end{thm}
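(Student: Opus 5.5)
The proof will mirror that of Theorem~\ref{Tb4}, with the roles of the two tensor factors interchanged. First I will collect the identities that the dual bases $\{e_j\}$, $\{f_j\}$ satisfy with respect to the skew-symmetric, invariant, graded form $\varpi$. These are the Leibniz analogues of Eqs.~\eqref{Bg6}--\eqref{Bg12}, and they follow from left nondegeneracy of $\hat{\varpi}$ together with $\varpi(b_1\circ b_2,b_3)=-\varpi(b_2,b_1\circ b_3)$ and $\varpi(b_1\circ b_2,b_3)=\varpi(b_1,b_2\circ b_3+b_3\circ b_2)$. Examples of the identities needed are
\[
\sum_j e_j\otimes f_j=-\sum_j f_j\otimes e_j, \qquad \sum_j x\circ e_j\otimes f_j=-\sum_j e_j\otimes x\circ f_j,
\]
and
\[
\sum_j e_j\circ x\otimes f_j=-\sum_j e_j\otimes (x\circ f_j+f_j\circ x).
\]
I also need the three-fold versions. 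These express the elements $\sum_{p,q}e_p\circ e_q\otimes f_p\otimes f_q$, $\sum_{p,q}e_p\otimes f_p\circ e_q\otimes f_q$, $\sum_{p,q}e_p\otimes e_q\otimes f_p\circ f_q$ and $\sum_{p,q}e_p\otimes e_q\otimes f_q\circ f_p$ in terms of two independent ones, say $X=\sum e_p\circ e_q\otimes f_p\otimes f_q$ and $Y=\sum e_q\circ e_p\otimes f_p\otimes f_q$. Because Leibniz invariance is ``dual'' to Zinbiel invariance, the Leibniz identity (equivalently $(R_\circ(x)\otimes I)\vartheta_\varpi=0$ from Eq.~\eqref{Bd3}) should make some of these combinations vanish or collapse.

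\textbf{Invariance.} Using Eq.~\eqref{Pd2}, I will expand
\[
\bigl(L_\cdot(a\otimes e_k)\hat{\otimes} I+I\hat{\otimes}(L_\cdot-R_\cdot)(a\otimes e_k)\bigr)(\hat{r}-\hat{\tau}\hat{r})
\]
exactly as in Eq.~\eqref{Yq1}. Applying the two-fold basis identities, I will move every $Q$-factor into one of the two forms $\sum_j e_k\circ e_j\otimes f_j$ and $\sum_j e_j\otimes e_k\circ f_j$. The aim is to reach the expression
\[
(I\otimes L_\ominus(a)-L_\ast(a)\otimes I)(r+\tau r)\bullet(\cdots)+\bigl(L_\rhd(a)\otimes I-I\otimes(L_\ast+R_\ast)(a)\bigr)(r+\tau r)\bullet(\cdots),
\]
so that invariance of $r+\tau(r)$ in the sense of Eqs.~\eqref{Iv3}--\eqref{Iv4} kills it. Here $a$ is arbitrary in $P$ and $e_k$ is a homogeneous basis vector.

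\textbf{$S$-equation.} I will expand $S(\hat r)=\hat r_{12}\cdot\hat r_{13}-\hat r_{12}\cdot\hat r_{23}-[\hat r_{13},\hat r_{23}]$ into eight terms of the form $(\text{products of }r)\bullet(\text{basis tensors})$, as in Eq.~\eqref{Yq2}. Rewriting the basis tensors in terms of $X$ and $Y$, I expect the result to be $S(\hat r)=Z(r)\bullet X+(\text{combination of }Z(r)\text{ and }Z_1(r))\bullet Y$, possibly after applying a permutation $\sigma_{12}$. By hypothesis $Z(r)=0$, and Proposition~\ref{Qy} gives $Z_1(r)=0$, so $S(\hat r)=0$. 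I expect the main obstacle to be this bookkeeping: matching each of the eight terms, through the correct three-fold identity, to a term of $Z$ or $Z_1$ with the right sign. I will check each term separately and make any needed correction using invariance of $r+\tau r$, as in the proof of Proposition~\ref{Qy}. The symmetric case follows from $\sum e_j\otimes f_j=-\sum f_j\otimes e_j$: it shows $\hat\tau\hat r=-\sum\tau(r)\bullet(e_j\otimes f_j)$, so skew-symmetric $r$ gives symmetric $\hat r$.

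\textbf{Converse for Example~\ref{Lq1}.} Here the dual basis is explicit: $e_j$ runs over $\varepsilon_1t^m,\varepsilon_2t^m,\varepsilon_3t^m,\varepsilon_4t^m$, with dual elements $\varepsilon_3t^{-m},\varepsilon_4t^{-m},-\varepsilon_1t^{-m},-\varepsilon_2t^{-m}$, which is the displayed form of $\hat r$. The plan, as in the converse parts of Proposition~\ref{Tb5} and Theorem~\ref{Tb7}, is to show that $X$ and $Y$ (respectively the two two-fold tensors in the invariance computation) have distinct nonzero coefficients on suitable basis monomials. For instance, $\varepsilon_2\circ\varepsilon_1=\varepsilon_1$ and $\varepsilon_3\circ\varepsilon_2=\varepsilon_3+2\varepsilon_4$ give monomials such as $\varepsilon_1t^{\cdot}\otimes\varepsilon_4t^{\cdot}\otimes\varepsilon_3t^{\cdot}$ that appear in one of them but not the other. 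Comparing those coefficients in $S(\hat r)=0$ isolates $Z(r)=0$. Comparing coefficients in the invariance equation isolates Eqs.~\eqref{Iv3}--\eqref{Iv4} for $r+\tau r$. The symmetric statement then follows as before.
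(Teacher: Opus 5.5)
Your overall route is the paper's: derive \eqref{Bd5}--\eqref{Bd10} from left nondegeneracy of $\hat{\varpi}$, reduce the invariance expression to operators applied to $r+\tau(r)$, and write $S(\hat{r})$ through $X=\sum_{p,q}e_p\circ e_q\otimes f_p\otimes f_q$ and $Y=\sum_{p,q}e_q\circ e_p\otimes f_p\otimes f_q$. You then invoke Proposition~\ref{Qy} and compare coefficients in Example~\ref{Lq1}. However, your third sample identity has the wrong sign. The correct relation is $\sum_j e_j\circ x\otimes f_j=+\sum_j e_j\otimes(x\circ f_j+f_j\circ x)$; combined with your second identity it is \eqref{Bd7}.

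Since the correct identity holds, yours amounts to $\sum_j e_j\circ x\otimes f_j=0$. That fails already for $x=\varepsilon_1t^n$ in Example~\ref{Lq1}, where both sides of the correct identity equal $\sum_m(\varepsilon_1t^{m+n}\otimes\varepsilon_4t^{-m}-\varepsilon_4t^{m+n}\otimes\varepsilon_1t^{-m})\neq 0$. The error is not cosmetic. With your sign, the invariance computation produces $(R_{\lhd}(a)\otimes I-I\otimes R_{\ast}(a))(r+\tau(r))$, which invariance does not annihilate. With the correct sign you get exactly the paper's expression
\[
\bigl(L_{\rhd}(a)\otimes I-I\otimes(L_{\ast}+R_{\ast})(a)\bigr)(r+\tau(r))\bullet\sum_j e_k\circ e_j\otimes f_j-\bigl(R_{\lhd}(a)\otimes I+I\otimes R_{\ast}(a)\bigr)(r+\tau(r))\bullet\sum_j e_j\circ e_k\otimes f_j .
\]
The first operator is \eqref{Iv4}. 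The second vanishes for the following reason. Because $s=r+\tau(r)$ is symmetric, flipping \eqref{Iv4} gives $(I\otimes L_{\rhd}(a))s=((L_{\ast}+R_{\ast})(a)\otimes I)s$. Substituting this into \eqref{Iv3} yields $(R_{\ast}(a)\otimes I+I\otimes R_{\lhd}(a))s=0$, and its flip is the needed identity. Read backwards, these are also the two relations the converse must recover.

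For the $S$-equation, no correction by $\sigma_{12}$ or by invariance is needed. Using \eqref{Bd5}, \eqref{Bd8} and \eqref{Bd9}, the eight terms collapse to exactly $S(\hat{r})=Z_1(r)\bullet X-Z(r)\bullet Y$. So your guess interchanges the roles of $X$ and $Y$, and invariance enters only through Proposition~\ref{Qy}.

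This affects the converse. The monomial $\varepsilon_1t^{\cdot}\otimes\varepsilon_4t^{\cdot}\otimes\varepsilon_3t^{\cdot}$ you propose (also the one used in the paper's proof) arises from $\varepsilon_2\circ\varepsilon_1$ inside $X$ and does not occur in $Y$. Its coefficient therefore gives $Z_1(r)=0$, not $Z(r)=0$. To isolate $Z(r)$, compare instead the coefficient of $\varepsilon_1t^{m+n}\otimes\varepsilon_3t^{-m}\otimes\varepsilon_4t^{-n}$. It comes from $\varepsilon_2t^{n}\circ\varepsilon_1t^{m}$ in $Y$ and does not occur in $X$.

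In the invariance equation, take $x=\varepsilon_2t^n$. The coefficient of $\varepsilon_1t^{n+m}\otimes\varepsilon_3t^{-m}$ isolates the first operator. The coefficient of $\varepsilon_1t^{n+m}\otimes\varepsilon_4t^{-m}$ is the first operator minus the second, which then gives the second operator applied to $r+\tau(r)$ vanishing.
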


\begin{proof}
Using the left non-degeneracy of $\hat{\varpi}$, \cite{HL} gives the following equations,
 \begin{small}
	\begin{align}
&\label{Bd5}\sum_{p,q\in\Omega}e_{p}\otimes f_{p}\circ e_{q} \otimes f_{q}
 =-\sum_{p,q\in\Omega}e_{p}\otimes e_{q}\otimes f_{p}\circ f_{q}
  = \sum_{p,q\in\Omega}e_{p} \circ e_{q} \otimes f_{p}\otimes f_{q}+ e_{q}\circ e_{p} \otimes f_{p}\otimes f_{q},\\
		&\label{Bd6}\sum_{p\in\Omega} e_{q}\circ e_{p} \otimes f_{p}=\sum_{p\in\Omega}f_{p}\otimes e_{q}\circ e_{p}
		=-\sum_{p\in\Omega}e_{p}\otimes e_{q}\circ f_{p} 
		=-\sum_{p\in\Omega} e_{q}\circ f_{p} \otimes e_{p}, 
\\&\label{Bd7}\sum_{p\in\Omega}e_{p}\otimes f_{p}\circ e_{q} =-\sum_{p\in\Omega}f_{p}\otimes e_{p}\circ e_{q}=
		\sum_{p\in\Omega} e_{q}\circ e_{p} \otimes f_{p} + e_{p} \circ e_{q} \otimes f_{p},
\\&\label{Bd8}\sum_{p,q\in\Omega} e_{q}\circ e_{p} \otimes
		f_{p}\otimes f_{q}=-\sum_{p,q\in\Omega}e_{p}\otimes e_{q}\circ f_{p} \otimes f_{q},
\\&\label{Bd9}\sum_{p,q\in\Omega} e_{p}\circ e_{q} \otimes f_{p}\otimes f_{q}=\sum_{p,q\in\Omega}e_{p}\otimes
		e_{q}\otimes f_{q}\circ f_{p},
 \\&\label{Bd10}\sum_{p\in\Omega} e_{p}\circ e_{q} \otimes f_{p}=-\sum_{p\in\Omega} f_{p}\circ e_{q} \otimes e_{p}.
	\end{align} 
\end{small}
By replacing $(A, \Delta_{\succ},\Delta_{\prec})$ and $(B,\eta)$ with
 $(P, \beta_{\rhd},\beta_{\lhd})$ and $(Q,\vartheta)$ respectively
 in the proof of Theorem \ref{Tb4}, and  in combination with Eqs.~\eqref{Bd5}-\eqref{Bd10} and \eqref{Yq1}-\eqref{Yq2}, we have 
\begin{small} \begin{align*}
&(L_{\cdot}(a\otimes e_k)\hat{\otimes} I+I\hat{\otimes}(L_{\cdot}-R_{\cdot})(a\otimes e_k) )(\hat{r}-\hat{\tau}\hat{r})
\\=&\sum_{i}\sum_{j} (a\otimes e_k)\cdot(u_i\otimes e_j)\otimes v_i\otimes f_j-(a\otimes e_k)\cdot(v_i\otimes f_j)\otimes u_i\otimes e_j
\\&+u_i\otimes e_j\otimes[a\otimes e_k,v_i\otimes f_j]-v_i\otimes f_j\otimes[a\otimes e_k,u_i\otimes e_j]
\\=&\sum_{j}(L_{\rhd}(a)\otimes I-I\otimes (L_{\ast}+R_{\ast})(a))(r+\tau(r))\bullet \big(e_k\circ e_j\otimes f_j\big)
\\&-(R_{\lhd}(a)\otimes I+I\otimes R_{\ast}(a))(r+\tau(r))\bullet \big(e_j\circ e_k\otimes f_j\big)\nonumber,
\end{align*}
and
\begin{align*}
S(\hat{r})=\hat{r}_{12}\cdot\hat{r}_{13}-[\hat{r}_{13},\hat{r}_{23}]
-\hat{r}_{12}\cdot\hat{r}_{23}
=\sum_{p,q}-Z(r)\bullet\big (e_{q}\circ e_{p}\otimes f_{p}\otimes f_{q}\big)
+Z_{1}(r)\bullet \big(e_{p}\circ e_{q}\otimes f_{p}\otimes f_{q}\big),
\end{align*}\end{small}
where $Z_{1}(r)=r_{13}\ast r_{23}+r_{23}\ast r_{13}+r_{12}\rhd r_{13}-r_{12}\rhd r_{23}$.
In view of $r+\tau(r)$ being invariant in $P\otimes P$ and Proposition \ref{Qy}, we get that
 \begin{align*}
&(L_{\cdot}(a\otimes e_k)\hat{\otimes} I+I\hat{\otimes}(L_{\cdot}-R_{\cdot})(a\otimes e_k) )(\hat{r}-\hat{\tau}\hat{r})
=0,\\&S(\hat{r})=\hat{r}_{12}\cdot\hat{r}_{13}-[\hat{r}_{13},\hat{r}_{23}]
-\hat{r}_{12}\cdot\hat{r}_{23}=0.
\end{align*}
Thus,
 $\hat{r}-\hat{\tau}\hat{r}$
 is invariant in $(P\otimes Q)\hat{\otimes}(P\otimes Q)$ and 
 $\hat{r} $ is a completed solution of the $S$-equation $S(\hat{r})=0 $ in $(P\otimes Q, \cdot)$.
It is clear that if $r$ is skew-symmetric, then $\hat{r}$
is symmetric. 

Let $(Q=\oplus_{i\in\mathbb{Z}}Q_{i},\circ,\varpi)$ be the
quadratic $\mathbb{Z}$-graded Leibniz algebra given in Example ~\ref{Lq1}, then 
$\{\varepsilon_3t^{-m},\varepsilon_4t^{-m},\varepsilon_1t^{-m},\varepsilon_2t^{-m}
 \mid m\in \mathbb{Z}\}$ 
 is the dual basis of $\{\varepsilon_1t^{m},\varepsilon_2t^{m},-\varepsilon_3t^{m},-\varepsilon_4t^{m}
 \mid m\in \mathbb{Z}\}$ with respect to
 $\varpi$.  The "if" part is obvious. On the other hand,
assume that $\hat{r}-\hat{\tau}\hat{r}$
 is invariant in $(P\otimes Q)\hat{\otimes}(P\otimes Q)$ and $\hat{r}$ is a completed solution of the S-equation in $(P\otimes Q, \cdot)$.
  Comparing the coefficients of $\varepsilon_1t^{n+m} \otimes \varepsilon_3t^{-m}$ and
   $\varepsilon_1t^{n+m}  \otimes \varepsilon_4t^{-m}$ respectively in the expansion
 \begin{small}\begin{align*}
 (L_{\cdot}(a\otimes \varepsilon_2t^{n})\hat{\otimes} I+I\hat{\otimes}(L_{\cdot}-R_{\cdot})(a\otimes \varepsilon_2t^{n}) )(\hat{r}-\hat{\tau}\hat{r})
 =0,\end{align*}\end{small}
we obtain that 
\begin{small}\begin{align*}&(L_{\rhd}(a)\otimes I-I\otimes (L_{\ast}+R_{\ast})(a))(r+\tau(r))=0,\\&
(R_{\lhd}(a)\otimes I+I\otimes R_{\ast}(a))(r+\tau(r))=0.\end{align*}\end{small}
It follows that
\begin{align*}(I\otimes L_{\ominus}(a)- L_{\ast}(a)\otimes I)(r+\tau(r))=0.\end{align*}
Thus, $r+\tau(r)$ is invariant in $P\otimes P$.

Comparing the coefficients of $\varepsilon_1t^{n+m} \otimes \varepsilon_4t^{-n}\otimes \varepsilon_3t^{-m}$ in the expansion
\begin{align*}
&S(\hat{r})=\hat{r}_{12}\cdot\hat{r}_{13}-[\hat{r}_{13},\hat{r}_{23}]
-\hat{r}_{12}\cdot\hat{r}_{23}=0,\end{align*}
which implies that $Z(r)=0$.
 \end{proof}
 
\begin{thm}
Let $(P, \rhd,\lhd,\beta_{\rhd},\beta_{\lhd})$ be a 
Zinbiel-dendriform bialgebra and $(Q=\oplus_{i\in\mathbb{Z}}Q_{i}, \circ,
\varpi)$ be a quadratic $\mathbb{Z}$-graded Leibniz algebra. Assume that $(P\otimes Q, \cdot,\delta)$ is
the induced completed pre-Lie bialgebra from $(P, \rhd,\lhd,\beta_{\rhd},\beta_{\lhd})$ by
$(Q, \circ,\varpi)$. If $(P, \rhd,\lhd,\beta_{\rhd,r},\beta_{\lhd,r})$ is quasi-triangular,
 then $(P\otimes Q, \cdot,\delta_{\hat{r}})$ is also quasi-triangular.
 In particular, $(P\otimes Q, \cdot,\delta_{\hat{r}})$ is also triangular if 
$(P, \rhd,\lhd,\beta_{\rhd,r},\beta_{\lhd,r})$ is triangular.
Moreover, if $(Q=\oplus_{i\in\mathbb{Z}}Q_{i}, \circ,\varpi)$ is 
the quadratic $\mathbb{Z}$-graded Leibniz algebra given in Example~\ref{Lq1},
    then $(P, \rhd,\lhd,\beta_{\rhd,r},\beta_{\lhd,r})$ is a quasi-triangular Zinbiel-dendriform bialgebra
    if and only if
 $(P\otimes Q, \cdot, \delta_{\hat{r}})$ is a quasi-triangular pre-Lie bialgebra,
 where $r$ and $\hat{r}$ are as given in Theorem \ref{Tb8}.

\end{thm}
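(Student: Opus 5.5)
The first assertion follows the pattern of the analogous Leibniz-dendriform result stated just before Theorem \ref{Ty1}. Suppose $(P,\rhd,\lhd,\beta_{\rhd,r},\beta_{\lhd,r})$ is quasi-triangular. By definition, $r=\sum_i u_i\otimes v_i$ is then a solution of the ZD-YBE with $r+\tau(r)$ invariant in $P\otimes P$. Theorem \ref{Tb8} then shows that $\hat{r}$, given by Eq.~\eqref{Cr2}, is a completed solution of the $S$-equation in $(P\otimes Q,\cdot)$ with $\hat{r}-\hat{\tau}\hat{r}$ invariant. The symmetric part of $\hat r$ is therefore invariant, so Theorem \ref{Py} makes $(P\otimes Q,\cdot,\delta_{\hat{r}})$ a completed pre-Lie bialgebra. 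By Definition \ref{Qt1} this bialgebra is completed quasi-triangular.

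For the triangular case, take $r$ skew-symmetric. The ``in particular'' part of Theorem \ref{Tb8} gives that $\hat{r}$ is symmetric, so the bialgebra is triangular. If one also wants $\delta_{\hat r}$ to coincide with the map $\delta$ obtained from Theorem \ref{Tb7} using $\beta_{\rhd,r},\beta_{\lhd,r}$, I would add a computation parallel to the proof of Theorem \ref{Ty1}. It expands $\delta_{\hat r}(a\otimes x)$ using Eq.~\eqref{Pd2} and rewrites the sums $\sum_j (e_j\circ x)\otimes f_j$ and $\sum_j (x\circ e_j)\otimes f_j$ via Eqs.~\eqref{Bd6}--\eqref{Bd7}. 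It then regroups the result into $\beta_{\rhd,r}(a)\bullet\vartheta_\varpi(x)-\tau\beta_{\lhd,r}(a)\bullet\hat\tau\vartheta_\varpi(x)$, using Eqs.~\eqref{CB4}--\eqref{CB5}. The statement itself does not require this identification.

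For the ``moreover'' part, take $(Q,\circ,\varpi)$ as in Example \ref{Lq1}. The forward direction is already covered. For the converse, assume $(P\otimes Q,\cdot,\delta_{\hat r})$ is quasi-triangular. Then $\hat{r}$, written explicitly in Theorem \ref{Tb8} through the dual basis $\{\varepsilon_3t^{-m},\varepsilon_4t^{-m},\varepsilon_1t^{-m},\varepsilon_2t^{-m}\}$, is a completed solution of the $S$-equation with $\hat r-\hat\tau\hat r$ invariant. The ``if and only if'' clause of Theorem \ref{Tb8} then yields that $r$ solves the ZD-YBE and that $r+\tau(r)$ is invariant. Theorem \ref{Qy1} now makes $(P,\rhd,\lhd,\beta_{\rhd,r},\beta_{\lhd,r})$ a Zinbiel-dendriform bialgebra, and it is quasi-triangular by definition.

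The main obstacle is this converse, and it lies in what ``quasi-triangular'' means for the pre-Lie side. If it means only that $\delta_{\hat r}$ arises from some $\hat r$ with these properties, the argument must use the specific $\hat r$ built from $r$, as the statement stipulates. With that reading, everything reduces to the coefficient comparisons already carried out in the proof of Theorem \ref{Tb8}: the coefficient of $\varepsilon_1t^{n+m}\otimes\varepsilon_4t^{-n}\otimes\varepsilon_3t^{-m}$ recovers $Z(r)$, and the coefficients in the invariance expansion recover invariance of $r+\tau(r)$. I would only double-check that these coefficients isolate $Z(r)$ cleanly, with no $Z_1(r)$ contamination, for this particular $Q$.
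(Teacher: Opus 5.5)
Your proposal is correct and is essentially the paper's proof. The forward direction, and the triangular case via the skew-symmetric special case, is Theorem~\ref{Tb8} fed into the definition of a completed quasi-triangular pre-Lie bialgebra; the converse for Example~\ref{Lq1} is exactly the ``if and only if'' clause of Theorem~\ref{Tb8}, with Theorem~\ref{Qy1} supplying the bialgebra structure.

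One slip needs fixing: $\hat r-\hat\tau\hat r$ is the \emph{skew-symmetric} part of $\hat r$, so its invariance does not make the symmetric part invariant. The hypothesis actually needed for $\delta_{\hat r}$ to give a completed pre-Lie bialgebra is invariance of the skew-symmetric part, which is also what the paper's definition of quasi-triangularity uses. The word ``symmetric'' in Theorem~\ref{Py} is a misprint, so you should invoke that theorem in the skew-symmetric form rather than claim the symmetric part is invariant.
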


\begin{proof} Let $(P, \rhd,\lhd,\beta_{\rhd},\beta_{\lhd})$ be a
quasi-triangular Zinbiel-dendriform bialgebra. Assume that
 $r=\sum_{i}u_{i}\otimes v_{i}$ is a solution of the ZD-YBE $Z(r)=0$ in $(P, \rhd,\lhd)$ 
and $r+\tau(r)$ is invariant in $P\otimes P$.
By Theorem \ref{Tb8}, we know that $\hat{r}$ is a completed solution
 of the $S$-equation $S(\hat{r})=0$ in $(P\otimes Q, \cdot)$ and $\hat{r}-\hat{\tau}\hat{r}$
 is invariant in $(P\otimes Q)\hat{\otimes} (P\otimes Q)$.
It follows that $(P\otimes Q, \cdot, \delta_{\hat{r}})$ is a quasi-triangular
completed pre-Lie bialgebra, where
$\delta_{\hat{r}}$ is given by Eq.~\eqref{CB4}. Moreover, let $(Q=\oplus_{i\in\mathbb{Z}}Q_{i}, \circ,\varpi)$ be 
the quadratic $\mathbb{Z}$-graded Leibniz algebra given in Example~\ref{Lq1}. According to Theorem \ref{Tb8},
$\hat{r}$ is a completed solution of the $S$-equation in $(P\otimes Q, \cdot)$ with $\hat{r}-\hat{\tau} \hat{r}$
 invariant in $(P\otimes Q)\hat{\otimes}(P\otimes Q)$
     if and only if $r+\tau(r)$ is invariant in $P\otimes P$ and $r$ is
a solution of the ZD-YBE in $(P,\rhd,\lhd)$.
Consequently, $(P, \rhd,\lhd,\beta_{\rhd,r},\beta_{\lhd,r})$ is a quasi-triangular Zinbiel-dendriform bialgebra
    if and only if
 $(P\otimes Q, \cdot,\delta_{\hat{r}})$ is a quasi-triangular pre-Lie bialgebra.
 
The proof is finished.
\end{proof}
 
\begin{ex} 
Let $(P,\rhd,\lhd)$ be the $3$-dimensional Zinbiel-dendriform algebra with a basis $\{u,v,w\}$ as given in Example \ref{Lq3},
and let
$(Q,\circ,\varpi)$ be the quadratic $\mathbb Z$-graded Leibniz algebra given in Example \ref{Lq1}. 
Then by Theorem \ref{Tb8}, 
\begin{align*}
       \hat{r} =& \sum_{m, n \in \mathbb{Z}}  (v \otimes \varepsilon_{1} t^{m}) \otimes (w \otimes \varepsilon_{3} t^{-m} )
       +(v \otimes \varepsilon_{2} t^{m}) \otimes (w \otimes \varepsilon_{4} t^{-m} )\\&
       -(v \otimes \varepsilon_{3} t^{m}) \otimes (w \otimes \varepsilon_{1} t^{-m} )
       -(v \otimes \varepsilon_{4} t^{m}) \otimes (w \otimes \varepsilon_{2} t^{-m} )\\&
       -(w \otimes \varepsilon_{1} t^{m}) \otimes (v \otimes \varepsilon_{3} t^{-m} )
       -(w \otimes \varepsilon_{2} t^{m}) \otimes (v \otimes \varepsilon_{4} t^{-m} )\\&
       +(w \otimes \varepsilon_{3} t^{m}) \otimes (v \otimes \varepsilon_{1} t^{-m} )
       +(w \otimes \varepsilon_{4} t^{m}) \otimes (v \otimes \varepsilon_{2} t^{-m} )
   \end{align*}
    is a completed symmetric solution of the $S$-equation in $(P\otimes Q, \cdot)$.
\end{ex}
 
The following result arises naturally.

 \begin{thm}\label{Ty2}
 Under the same assumption as in Theorem \ref{Tb8}, we define linear maps
 $\beta_{\rhd,r},\beta_{\lhd,r}:P\rightarrow P\otimes P$ by Eqs.~\eqref{CB4}-\eqref{CB5}, and
$\delta: P\otimes Q \rightarrow (P\otimes Q ) \hat{\otimes} (P\otimes Q )$ 
 by Eq.~\eqref{P2}. Then, by Theorem \ref{Tb6}, the triple $(P\otimes Q, \cdot, \delta)$
 forms a pre-Lie bialgebra. Moreover, this is precisely the quasi-triangular pre-Lie bialgebra
 $(P\otimes Q, \cdot, \delta_{\hat{r}})$ with cobracket defined by Eq.~\eqref{CB3}, 
 where $\hat{r}$ is given by Eq.~\eqref{Cr2}. In other words, we have the following commutative diagram:
 \begin{equation*}
        \xymatrix@C=3cm{
            \txt{ solutions of the ZD-YBE \\ with invariant skew-symmetric parts} \ar[r]^-{Theorem~\ref{Qy1}}
             \ar[d]^-{Theorem ~\ref{Tb7}} & \txt{Zinbiel-dendriform bialgebras} \ar[d]^-{Theorem~\ref{Tb8}}\\
            \txt{solutions of the S-equation \\ with invariant symmetric parts} 
            \ar[r]^-{Theorem~\ref{Py}} & \txt{pre-Lie bialgebras}}
    \end{equation*}

\end{thm}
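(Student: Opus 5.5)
We argue exactly as in the proof of Theorem~\ref{Ty1}, with the Zinbiel data replaced by the Leibniz data. The goal is the identity
\[
\delta_{\hat r}(a\otimes x)=\beta_{\rhd,r}(a)\bullet\vartheta_{\varpi}(x)-\tau\beta_{\lhd,r}(a)\bullet\hat\tau\vartheta_{\varpi}(x)
\]
for all $a\in P$ and all homogeneous $x\in Q$. Once this holds, $\delta_{\hat r}$ coincides with the cobracket $\delta$ of Eq.~\eqref{P2}. That $(P\otimes Q,\cdot,\delta)$ is a completed pre-Lie bialgebra then follows from Theorem~\ref{Qy1} combined with Theorem~\ref{Tb7}, and it is quasi-triangular by Theorem~\ref{Tb8} and Theorem~\ref{Py}.

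\textbf{Step 1: express $\vartheta_{\varpi}$ in the dual bases.} Using left nondegeneracy of $\hat\varpi$ and the defining relation $\hat\varpi(\vartheta_\varpi(x),y\otimes z)=-\varpi(x,y\circ z)$, we derive the Leibniz analogues of the formulas quoted from \cite{HL} in the proof of Theorem~\ref{Ty1}:
\[
\vartheta_\varpi(x)=-\sum_j x\circ e_j\otimes f_j,
\]
together with an expression for $\hat\tau\vartheta_\varpi(x)$ as a combination of $\sum_j x\circ e_j\otimes f_j$ and $\sum_j e_j\circ x\otimes f_j$. These are obtained from Eqs.~\eqref{Bd6}, \eqref{Bd7} and \eqref{Bd10}, which relate $\sum_j e_j\otimes f_j\circ x$, $\sum_j e_j\otimes x\circ f_j$ and similar sums. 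We verify each identity by pairing both sides with $y\otimes z$ under $\hat\varpi$, using the invariance and skew-symmetry of $\varpi$.

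\textbf{Step 2: expand $\delta_{\hat r}$.} Apply Eq.~\eqref{CB3} to $\hat r=\sum_{i,j}(u_i\otimes e_j)\otimes(v_i\otimes f_j)$ and expand with Eq.~\eqref{Pd2}. This gives five families of terms:
\[
(a\rhd u_i\otimes v_i)\bullet(x\circ e_j\otimes f_j),\quad (u_i\lhd a\otimes v_i)\bullet(e_j\circ x\otimes f_j),
\]
\[
(u_i\otimes a\rhd v_i)\bullet(e_j\otimes x\circ f_j),\quad (u_i\otimes v_i\lhd a)\bullet(e_j\otimes f_j\circ x),
\]
and a term from $R_\cdot$ of the form $(u_i\otimes v_i\rhd a)\bullet(e_j\otimes f_j\circ x)$ together with the corresponding $a\lhd v_i$ term. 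Using Step 1, we move every $Q$-factor into one of the two normal forms
\[
\sum_j x\circ e_j\otimes f_j \qquad\text{or}\qquad \sum_j e_j\circ x\otimes f_j .
\]
Collecting coefficients then yields two operators on $r$, namely
\[
L_{\rhd}(a)\otimes I-I\otimes(L_\ast+R_\ast)(a)\qquad\text{and}\qquad I\otimes L_{\ominus}(a)-L_\ast(a)\otimes I,
\]
or their flips, acting on $r$ with the two normal forms as coefficients.

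\textbf{Step 3: regroup into the target.} Rewrite the two normal forms back in terms of $\vartheta_\varpi(x)$ and $\hat\tau\vartheta_\varpi(x)$. The coefficient of $\vartheta_\varpi(x)$ should be $\beta_{\rhd,r}(a)$ by Eq.~\eqref{CB5}, and the coefficient of $\hat\tau\vartheta_\varpi(x)$ should be $-\tau\beta_{\lhd,r}(a)$ by Eq.~\eqref{CB4}.

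\textbf{Main obstacle.} The delicate part is Step 2. We must apply the correct dual-basis identities among \eqref{Bd6}, \eqref{Bd7} and \eqref{Bd10} so that the flip $\tau$ lands on the $\beta_{\lhd}$-part with the right sign, and so that the cross terms $(u_i\otimes v_i\rhd a)$ and $(u_i\otimes a\lhd v_i)$ combine into $L_{\ominus}=R_{\lhd}+L_{\rhd}$ rather than leaving residues. If the terms do not cancel directly, we will first try adding the invariance relations for $r+\tau(r)$, i.e.\ Eqs.~\eqref{Iv3}--\eqref{Iv4}. These relations are available because the statement assumes the hypotheses of Theorem~\ref{Tb8}, which require $r+\tau(r)$ to be invariant.
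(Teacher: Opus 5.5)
Your route is the paper's own; its proof is only ``the same argument as in Theorem~\ref{Ty1}''. However, Step~1 is false as stated, and once it is corrected Step~3 does not produce $\delta$.

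\textbf{Step 1.} The Zinbiel formula $\eta_\omega(x)=-\sum_j x\diamond e_j\otimes f_j$ relies on \emph{right} multiplications being $\omega$-skew. In a quadratic Leibniz algebra it is the \emph{left} multiplications that are skew, $\varpi(x\circ y,z)=-\varpi(y,x\circ z)$, so the roles of the slots change. Use the same normalization as in Theorem~\ref{Ty1}, namely $\varpi(e_j,f_j)=1$; this is also the normalization of the explicit $\hat r$ in Theorem~\ref{Tb8}. Pairing with $y\otimes z$ then gives
\[
\vartheta_\varpi(x)=\sum_j e_j\otimes f_j\circ x,\qquad \hat\tau\vartheta_\varpi(x)=-\sum_j e_j\circ x\otimes f_j,\qquad \sum_j x\circ e_j\otimes f_j=-\sum_j e_j\otimes x\circ f_j=\vartheta_\varpi(x)+\hat\tau\vartheta_\varpi(x).
\]
Your identity $\vartheta_\varpi(x)=-\sum_j x\circ e_j\otimes f_j$ already fails in Example~\ref{Lq1} at $x=\varepsilon_1t^n$. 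The right side vanishes because $\varepsilon_1\circ W=0$, while $\vartheta_\varpi(\varepsilon_1t^n)\neq0$. It fails under the opposite normalization as well.

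\textbf{Steps 2 and 3.} Your families of terms in Step~2 assemble to
\[
\delta_{\hat r}(a\otimes x)=-(L_\rhd(a)\otimes I)r\bullet X_1+(R_\lhd(a)\otimes I)r\bullet X_2-(I\otimes L_\ast(a))r\bullet X_3+(I\otimes R_\ast(a))r\bullet X_4,
\]
where $X_1=\sum_j x\circ e_j\otimes f_j$, $X_2=\sum_j e_j\circ x\otimes f_j$, $X_3=\sum_j e_j\otimes x\circ f_j$ and $X_4=\sum_j e_j\otimes f_j\circ x$. With the correct formulas this becomes
\[
\delta_{\hat r}(a\otimes x)=-\beta_{\rhd,r}(a)\bullet\vartheta_\varpi(x)+\big(I\otimes L_\ast(a)-L_\ominus(a)\otimes I\big)r\bullet\hat\tau\vartheta_\varpi(x).
\]
In Theorem~\ref{Ty1}, Eq.~\eqref{CB2} already contains $\tau(r)$, so no invariance is needed there. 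Here the second coefficient involves $r$, whereas $-\tau\beta_{\lhd,r}(a)=(I\otimes L_\ast(a)-L_\ominus(a)\otimes I)\tau(r)$. So you do need \eqref{Iv3}, as you suspected. Since $r+\tau(r)$ is symmetric, \eqref{Iv3} gives $(I\otimes L_\ast(a)-L_\ominus(a)\otimes I)(r+\tau(r))=0$. Hence the second coefficient equals $+\tau\beta_{\lhd,r}(a)$.

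The outcome is $\delta_{\hat r}=-\delta$, not $\delta$. The coefficient of $\vartheta_\varpi(x)$ is $-\beta_{\rhd,r}(a)$, which is generally nonzero. No invariance relation can repair a global sign, so in characteristic $\neq2$ the two cobrackets agree only when $\delta=0$.

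\textbf{Fix.} To obtain the stated coincidence, normalize the dual basis by $\varpi(f_j,e_j)=1$, that is, replace $\hat r$ by $-\hat r$. This is harmless: $S$ is homogeneous quadratic and the invariance of $\hat r-\hat\tau\hat r$ is linear, so $-\hat r$ still satisfies the hypotheses of Theorem~\ref{Py}. The paper's one-line proof passes over both the changed dual-basis formulas and this sign, so you cannot simply transcribe Theorem~\ref{Ty1}.
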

\begin{proof}
The same argument as in Theorem \ref{Ty1} applies here.

\end{proof}

\begin{center}{\textbf{Acknowledgments}}
\end{center}
Q. Sun is supported by the Natural Science
Foundation of Zhejiang Province of China (No. LY19A010001) and the Science
and Technology Planning Project of Zhejiang Province
(No. 2023C01130).

\begin{center} {\textbf{Statements and Declarations}}
\end{center}
 All datasets underlying the conclusions of the paper are available
to readers. No conflict of interest exits in the submission of this
manuscript.


\end {document}